\newcommand{\SL}{\mathrm{SL}}
\newcommand{\Prob}{\mathrm{Prob}}
\newcommand{\bord}{\mathrm{Bord }}
\newcommand{\overbar}[1]{\mkern 1.5mu\overline{\mkern-1.5mu#1\mkern-1.5mu}\mkern 1.5mu}
\DeclarePairedDelimiter\ceil{\lceil}{\rceil}
\DeclarePairedDelimiter\floor{\lfloor}{\rfloor}
\definecolor{uuuuuu}{rgb}{0.26666666666666666,0.26666666666666666,0.26666666666666666}
\theoremstyle{plain}
\newtheorem{theorem}{Theorem} 
\newtheorem{proposition}{Proposition}
\newtheorem{corollary}{Corollary}
\newtheorem{lemma}{Lemma}
\theoremstyle{definition}
\newtheorem{definition}{Definition} 
\newtheorem{example}{Example} 
\newtheorem*{remark}{Remark}
\title{Continuity of the drift in groups acting on strongly hyperbolic spaces}
\author {Luís Miguel Sampaio}
\begin{document}

	\maketitle
	
	\begin{abstract}
		The Avalanche principle, in its original setting, together with large deviations yields a systematic way of proving the continuity of the Lyapunov exponent. In this text we present a geometric version of the Avalanche Principle in the context of hyperbolic spaces, which will extend the usage of these techniques to study the drift in such spaces. This continuity criteria applies not only to the drift but also to the limit point of the process itself. We apply this abstract result to derive continuity of the drift for Markov processes in strongly hyperbolic spaces.
	\end{abstract}
	
	\section{Introduction}
	
	The study of random products of operators appears naturally in some problems of mathematics and its applications. In this text we take a closer look at the product of isometries of strongly hyperbolic metric spaces through their action on the space. In the prequel \cite{sampaio2021regularity} the author proved that for random walks, the drift, as a function of the measure, exhibits nice regularity properties such as large deviation estimates and continuity. In this text, where the context is strongly hyperbolic spaces, we extend the previous result by presenting a general condition for continuity, which also applies to Markov processes.
	
	We call this general condition the Abstract Continuity Theorem, as it allows us to obtain continuity in a quite abstract setting of any ergodic transformation picking our isometries, provided some conditions are met. Such a result is inspired by its linear counterpart for Lyapunov exponents by Duarte and Klein \cite{duarte2016lyapunov}. The key ingredients for our work will be large deviations and the Avalanche Principle. In particular, one of our goals is to prove that in the presence of subgaussian large deviation estimates for the drift, the drift is continuous.
    	
    Large deviations deal with the idea of rate of convergence associated with average limit quantities, whose value is independent of the path taken, 
    although the rate may vary. In this work we will require that the number of paths not converging fast enough decay exponentially over time.
	
	Notice that if the drift is positive, the action of the successive products of isometries on the space should not live in any compact set. In order to understand this behaviour we will look at sensible compactifications and boundaries at the space. Strongly hyperbolic spaces come equipped with a natural boundary, known as the Gromov, whilst as a metric space they admit a compactification by horofunctions. These two objects exhibit a deep relationship and allow us to state that, as a consequence of Karlsson and Gouëzel's theorem, (see \cite{gouezel2020subadditive}), paths of a  process with positive drift almost surely converge to a point in the Gromov boundary. We call this limit point of the process hitting point and its continuity will also be the object of our study.
	
	The Avalanche Principle, introduced in $\cite{goldstein2001holder}$, is a result which allows us to take conclusions on a product of operators from its factors. Together with large deviations, the principle allows us to push controls of finite nature, such as continuity and positivity forward in time. In this text we use it for both problems we've stated so far; continuity of the drift and hitting point. An avalanche principle for CAT($-1$) spaces was obtained previously by Oregon-Reyes \cite{oregon2020avalanche}. We present a new more succinct proof for strongly hyperbolic spaces, at a cost in the hypothesis.

	Strongly hyperbolic spaces are well behaved at the boundary when compared to purely Gromov spaces, which is a consequence of continuity of the Gromov product. Thus strong hyperbolicity is an essential step in our method for obtaining continuity, specially when it comes to the hitting point. We will see this become evident as the Avalanche Principle requires strong hyperbolicity.
	
	In this work we apply a spectral argument similar to Nagaev's \cite{nagaev1957some} and Duarte-Klein \cite{duarte2017Klein} to obtain large deviations in Markov systems. In the hyperbolic setting these methods have also been used to develop central limit theorems \cite{bjorklund2010central} as well as continuity of the drift for random walks \cite{aoun2021random, sampaio2021regularity}. Random walks in groups acting on hyperbolic space have been intensively studied and other regularity results are known, such as analiticity for finitely supported measures in hyperbolic groups \cite{gouezel2015analyticity} and large deviations principles for countable groups \cite{boulanger2020large}. The case of Markov systems is a lot less studied \cite{bougerol1999brownian, naor2006markov}. 
	
	In the remainder of the introduction we define all the mathematical objects at hand and display the full statements of the theorems. Section 2 is devoted to the Avalanche principle whilst sections 3 and 4 contain the proof of the abstract continuity theorem. To finish, in section 5 we obtain large deviations for Markov systems over these groups of isometries, which allows us obtain continuity.
	
	\subsection{Geometric setting}
	
	Let $X$ be a metric space, define the Gromov product in $X$ as
	\begin{equation*}
		\langle x \, , \, z \rangle_y := \frac{1}{2}\left( d(x,y) + d(y,z) - d(x,z) \right) \hspace{1cm} \forall x,y,z \in X.
	\end{equation*}
	We say that $X$ is a Gromov $\delta$-hyperbolic space, or simply hyperbolic space, if for every $x,y,z$ and $w$ in $X$,
	\begin{equation}
		\label{4pcondition}
		\langle x \, , \, z \rangle_w \geq \min\{ \langle x \, , \, y \rangle_w \, , \, \langle y \, , \, z \rangle_w \} - \delta.
	\end{equation}
	We call (\ref{4pcondition}) the 4-point condition of hyperbolicity or Gromov's inequality.
	
	A metric space $X$ is said to be geodesic if for every two points $x$ and $y$ in $X$, there exists an isometric embedding $\gamma:[0,d(x,y)]\to X$ connecting $x$ to $y$. For geodesic spaces, Gromov hyperbolicity has more geometric flavour (see \cite{das2017geometry}): $X$ is $\delta$-hyperbolic if there exists $\delta>0$ such that for every triangle in $X$, any side is contained in a $3\delta$-neighbourhood of the other two, in other words, geometrically, triangles are thin.
	
	We say that a sequence $(x_n)$ in an hyperbolic space $X$ with basepoint $x_0$ is a Gromov sequence if $\langle x_n\, , \, x_m \rangle_{x_0}$ tends to infinity as $m$ and $n$ tend to infinity. Two Gromov sequences $(x_n)$ and $(y_n)$ are equivalent, $(x_n)\sim (y_n)$, if $\langle x_n\, , \, y_n \rangle_{x_0}$ tends to infinity as $n$ tends to infinity. Gromov's inequality implies that this is an equivalence relation. The Gromov boundary, denoted by $\partial X$, is the set of equivalence classes of Gromov sequences.  Finally, $G$, the group of isometries of $X$ naturally acts on $\partial X$ by sending $\xi = [x_n]_\sim$ to $g\xi = [gx_n]_\sim$. 
	
	The Gromov product in $X$ may be extended to its Gromov boundary: given $\xi, \eta \in \partial X$ and $y,z\in X$, let
	\begin{align*}
		\langle \xi \, , \, \eta \rangle_z & := \inf \left\{\liminf_{n,m\to \infty}\, \langle x_n \, , \, y_m \rangle_z \, : \, (x_n)\in \xi, \, (y_m)\in \eta \right\},\\
		\langle x \, , \, \eta \rangle_z &= \langle \eta \, , \, x \rangle_z := \inf \left\{\liminf_{n\to \infty} \, \langle x_n \, , \, x \rangle_z \, : \, (x_n)\in \xi \right\}.
	\end{align*}
	
	Denote by $\bord X$ the set $X\cup \partial X$. Given $1 < b \leq 2^\frac{1}{\delta}$ and $x\in X$ consider the symmetric map $\rho_{x, b}: \bord X \times \bord X \to \mathbb{R}$ given by
	\begin{equation*}
		\rho_{x,b}(\xi\, , \, \eta) = b^{-\langle \xi \, , \, \eta \rangle_{x}}.
	\end{equation*} 
	
	\begin{definition}
	    We say that a metric space $X$ is a strongly hyperbolic space if for every $x\in X$ the map $\rho_{x,b}$ satisfies the triangle inequality, in particular $\rho_{x,b}$ defines a metric in $\partial X$.
	\end{definition}
	
	In particular, for every $\eta, \xi, \zeta \in \bord X$ we have the inequalities
	\begin{equation*}
	    \rho_{x,b}(\xi, \zeta) \leq \rho_{x,b}(\xi, \eta) +  \rho_{x,b}(\eta, \zeta) \leq 2 \max \{\rho_{x,b}(\xi, \eta), \rho_{x,b}(\eta, \zeta)\},
	\end{equation*}
	so applying the logarithm of base $b$ on both sides yields Gromov hyperbolicity with $\delta = \log_b 2$.
	
	Fixing a basepoint $x_0\in X$ we denote by $\overbar{D}_b$ the map $\rho_{x_0,b}$. Throughout the text, $X$ will denote a geodesic and separable although not necessarily proper strongly hyperbolic metric space.
	
	Strongly hyperbolic spaces are very well behaved at infinity, in particular the Gromov product is continuous in such spaces, in other words, for such spaces given $\xi, \eta \in \partial X$ and $z \in X$
	\begin{equation*}
		\langle \xi \, , \,\eta\rangle_z = \lim_{n\to \infty} \langle x_n \,,\, y_n \rangle_z
	\end{equation*}
	for every $(x_n)\in \xi$ and $(y_n)\in \eta$. Next we endow $\bord X$ with a metric.
	
	\begin{proposition}[Proposition 3.6.13 in \cite{das2017geometry}]
		For every $\xi, \eta \in \bord X$ let
		\begin{equation*}
			D_b(\xi, \eta) := \min \left\{ (\log b)d(\xi \, , \, \eta) \, ; \,  \overbar{D}_b(\xi \, , \, \eta) \right\},
		\end{equation*}
		using the convention $d(\xi, \eta)=\infty$ if either $\xi$ or $\eta$ belong to $\partial X$ and $\eta \neq \xi$. Then $D_b$ is a metric in $Bord X$, called the visual metric, inducing in $X$ the same topology as the metric $d$.
	\end{proposition}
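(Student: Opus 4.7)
The plan is to verify the three metric axioms and then the topological claim. Symmetry is inherited from $d$ and $\overbar{D}_b$. For positive definiteness, if $\xi = \eta$ both terms vanish (with $b^{-\infty} = 0$): for $\xi \in X$ this is immediate, while for $\xi \in \partial X$ any Gromov sequence $(x_n) \in \xi$ satisfies $\langle x_n, x_m \rangle_{x_0} \to \infty$, so $\overbar{D}_b(\xi,\xi) = 0$. Conversely, $D_b(\xi,\eta) = 0$ requires one of the two terms to vanish. If $\xi,\eta \in X$ with $(\log b)\, d(\xi,\eta) = 0$, then $\xi = \eta$. If $\overbar{D}_b(\xi,\eta) = 0$, then $\langle \xi,\eta \rangle_{x_0} = \infty$; the mixed case $\xi \in X$, $\eta \in \partial X$ is then excluded, since the triangle inequality applied to the defining formula gives $\langle \xi,\eta \rangle_{x_0} \leq d(x_0,\xi) < \infty$ (passing to the limit along a defining Gromov sequence via the continuity of the Gromov product on strongly hyperbolic spaces), while the case $\xi,\eta \in \partial X$ forces the two defining Gromov sequences to be equivalent, hence $\xi = \eta$.

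The triangle inequality follows by case analysis on which of the two terms realises the minimum at each edge. When both edges use $(\log b)\, d$, the triangle inequality for $d$ settles it; when both use $\overbar{D}_b$, the strong hyperbolicity hypothesis does. For the mixed case, say $D_b(\xi,\eta) = \overbar{D}_b(\xi,\eta)$ and $D_b(\eta,\zeta) = (\log b)\, d(\eta,\zeta)$ with $\eta,\zeta \in X$, the key ingredient will be the Lipschitz estimate
\begin{equation*}
    \bigl|\overbar{D}_b(\xi,\eta) - \overbar{D}_b(\xi,\zeta)\bigr| \leq (\log b)\, d(\eta,\zeta), \qquad \xi \in \bord X,\ \eta,\zeta \in X.
\end{equation*}
I would prove this from two elementary observations: the map $\langle \xi,\cdot \rangle_{x_0}$ is $1$-Lipschitz on $X$ (direct from the defining formula and the triangle inequality, extended to $\xi \in \partial X$ by continuity of the Gromov product), and the real function $s \mapsto b^{-s}$ has derivative bounded in absolute value by $\log b$ on $[0,\infty)$. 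Combining the Lipschitz estimate with the strong-hyperbolicity bound $\overbar{D}_b(\xi,\zeta) \leq \overbar{D}_b(\xi,\eta) + \overbar{D}_b(\eta,\zeta)$ — used to replace the second summand by $(\log b)\, d(\eta,\zeta)$ — then yields $D_b(\xi,\zeta) \leq \overbar{D}_b(\xi,\zeta) \leq D_b(\xi,\eta) + D_b(\eta,\zeta)$.

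For the topology on $X$, convergence in $d$ trivially gives convergence in $D_b$. For the converse, suppose $\xi \in X$ and $D_b(\xi_n,\xi) \to 0$; I would show $d(\xi_n,\xi) \to 0$ by contradiction. If not, along a subsequence $(\log b)\, d(\xi_n,\xi)$ remains bounded away from $0$, so the minimum must eventually be realised by $\overbar{D}_b(\xi_n,\xi)$, forcing $\langle \xi_n,\xi \rangle_{x_0} \to \infty$. The triangle inequality however gives $\langle \xi_n,\xi\rangle_{x_0} \leq d(x_0,\xi) < \infty$, a contradiction. The main obstacle throughout will be the mixed case of the triangle inequality, where one must translate quantitatively between the intrinsic scale of $d$ on $X$ and the boundary scale measured by $\overbar{D}_b$; the Lipschitz lemma above is precisely the bridge that makes the two scales compatible.
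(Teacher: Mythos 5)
The paper does not give its own proof of this statement—it is quoted verbatim from Proposition~3.6.13 of the cited reference, which is proved there in the greater generality of arbitrary Gromov hyperbolic spaces via a chain/quasi-metric argument. Your proof exploits strong hyperbolicity to shortcut that: $\overbar{D}_b$ already satisfies a genuine triangle inequality, so the only delicate point is the mixed case of the triangle inequality, which your $1$-Lipschitz lemma for $\overbar{D}_b(\xi,\cdot)$ on $X$ resolves cleanly. The case analysis, positive definiteness, and the topology comparison are all handled correctly, and the Lipschitz estimate itself is correctly derived from the $1$-Lipschitz property of $\langle\xi,\cdot\rangle_{x_0}$ together with the derivative bound for $s\mapsto b^{-s}$ on $[0,\infty)$.

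One sentence in the mixed case is, however, logically garbled and worth fixing: you write that the strong-hyperbolicity bound $\overbar{D}_b(\xi,\zeta)\leq\overbar{D}_b(\xi,\eta)+\overbar{D}_b(\eta,\zeta)$ is ``used to replace the second summand by $(\log b)\,d(\eta,\zeta)$.'' That replacement is not valid from the strong-hyperbolicity inequality: in this case the minimum $D_b(\eta,\zeta)$ is realized by $(\log b)\,d(\eta,\zeta)$, which means $(\log b)\,d(\eta,\zeta)\leq\overbar{D}_b(\eta,\zeta)$, so the substitution would go the wrong way. What actually closes the case is the Lipschitz lemma \emph{alone}: it directly yields $\overbar{D}_b(\xi,\zeta)\leq\overbar{D}_b(\xi,\eta)+(\log b)\,d(\eta,\zeta)$ without any appeal to the $\overbar{D}_b$-triangle inequality. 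You should drop the reference to the strong-hyperbolicity bound in that sentence; the remaining chain $D_b(\xi,\zeta)\leq\overbar{D}_b(\xi,\zeta)\leq\overbar{D}_b(\xi,\eta)+(\log b)\,d(\eta,\zeta)=D_b(\xi,\eta)+D_b(\eta,\zeta)$ is then exactly right. (Minor additional polish: in the positive-definiteness step, you should also note explicitly that $\overbar{D}_b(\xi,\eta)=0$ with $\xi,\eta\in X$ is impossible, since $\langle\xi,\eta\rangle_{x_0}\leq\min\{d(\xi,x_0),d(\eta,x_0)\}<\infty$.)
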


	Since the Gromov product is always positive, we have $D_b \leq 1$, in particular, $\bord X$ is a bounded space when equipped with this metric. Its main drawback however is that $\bord X$ compact if and only if the space $X$ is locally compact. To combat this problem consider now the injection
	\begin{align*}
		\rho: X & \to C(X) \\
		x & \mapsto h_x( \cdot ) = d(\cdot , x) - d(x,x_0),
	\end{align*}
	where $C(X)\subset \mathbb{R}^X$ is the space of continuous functions in $X$ endowed with the topology of pointwise convergence. Then  $X^h := \overbar{\rho(X)}$ is compact. We call the elements of $X^h$ horofunctions and  $X^h$ the horofunction compactification of $X$. The action of $G$ on $X$ extends to its horofunction boundary as follows: for every $h \in X^h$, $g\in G$ and $z\in X$,
	\begin{equation}
	    \label{horoAction}
		g\cdot h(z)= h(g^{-1}z)-h(g^{-1}x_0).
	\end{equation}
	The horofunction compactification can be partitioned into its finite and infinite $G$-invariant parts given, respectively, by $X_F^h := \{h\in X^h \, : \, \inf(h)>-\infty \}$ and $X_\infty^h := \{h \in X^h \, : \, \inf(h)= -\infty \}$.
	
	The two boundaries are related by the local minimum map (see \cite{maher2018random}) $\phi:X^h \to \partial X$ sending every horofunction in $h\in X^h$ to the unique point $\xi \in \partial X$ such that for every $(x_n)\in \xi$,  $\lim_{n\to \infty}h(x_n) = - \infty$. This map continuous, surjective and $G-$equivariant map for general hyperbolic spaces. In the case  of strongly hyperbolic spaces, more can be said, in particular $\phi$ is a homeomorphism; again showcasing the good behaviour of such spaces at infinity. This will allow us to work interchangeably between the two boundaries. We this in mind, we denote by $h_\xi \in X^h$ the horofunction related to $\xi\in \partial X$ under the homeomorphism.
	
	At this point we equip $G$ with a metric that tracks the behaviour of its action $\bord X$. Namely, given $1<b\leq 2^{1/ \delta}$ we consider
	\begin{equation*}
		d_G(g_1,g_2) := \max \left\{ \sup_{\xi \in \bord X} D_b(g_1\xi, g_2\xi) \, ; \, \sup_{\xi \in \bord X} D_b(g_1^{-1}\xi, g_2^{-1}\xi) \right\}
	\end{equation*}
	for every $g_1, g_2 \in G$. In \cite{sampaio2021regularity} it was proven that $G$ with this metric is a topological group. 
	
	To finalize this introduction, we note  that the action of $G$ in $X$ satisfies a well understood mean value type formula.
	
	\begin{proposition}[Proposition 5 \cite{sampaio2021regularity}]
		\label{VisualMetric}
		Let $g\in G$ and $\xi, \eta \in \bord X$, then
		\begin{equation*}
			\frac{\overbar{D}_b(g\xi\, , \, g\eta)}{\overbar{D}_b(\xi\, , \, \eta)} 
			=b^{- \frac{1}{2} \left[ h_\xi(g^{-1}x_0) + h_\eta(g^{-1}x_0) \right]}.
		\end{equation*}
	\end{proposition}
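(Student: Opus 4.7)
The plan is to reduce the identity to a direct computation using two facts: isometry-invariance of the Gromov product (with respect to simultaneously transported basepoints), and the definition of horofunctions as limits of normalized distance functions. Since $\overbar{D}_b = b^{-\langle \cdot, \cdot\rangle_{x_0}}$, taking logarithms base $b$ reduces the statement to the identity
\begin{equation*}
\langle g\xi, g\eta\rangle_{x_0} - \langle \xi, \eta\rangle_{x_0} \;=\; \tfrac{1}{2}\bigl[h_\xi(g^{-1}x_0) + h_\eta(g^{-1}x_0)\bigr].
\end{equation*}

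First I would handle the left-hand side by changing basepoint. Because $g$ is an isometry of $X$ and its action on $\bord X$ is induced by applying $g$ to representatives of Gromov sequences, the Gromov product satisfies $\langle g\xi, g\eta\rangle_{gz} = \langle \xi, \eta\rangle_{z}$ for all $z \in X$ and $\xi, \eta \in \bord X$. Taking $z = g^{-1}x_0$ gives $\langle g\xi, g\eta\rangle_{x_0} = \langle \xi, \eta\rangle_{g^{-1}x_0}$, turning the problem into a basepoint-change formula for the Gromov product. For $x, y \in X$ and basepoints $z_1, z_2 \in X$, a one-line calculation from the definition yields
\begin{equation*}
\langle x, y\rangle_{z_1} - \langle x, y\rangle_{z_2} = \tfrac{1}{2}\bigl[d(z_1,x) - d(z_2,x)\bigr] + \tfrac{1}{2}\bigl[d(z_1,y) - d(z_2,y)\bigr].
\end{equation*}

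Next I would pass to the boundary. Take representatives $(x_n) \in \xi$, $(y_n) \in \eta$ (or constant sequences if $\xi$ or $\eta$ already lies in $X$) and apply the above identity with $z_1 = g^{-1}x_0$, $z_2 = x_0$. Here is where strong hyperbolicity enters in an essential way: by the continuity of the Gromov product recalled just before the proposition, $\langle \xi, \eta\rangle_{z} = \lim_n \langle x_n, y_n\rangle_z$ for both choices of $z$, so the limit can be taken inside the difference. The resulting expression reads
\begin{equation*}
\langle \xi, \eta\rangle_{g^{-1}x_0} - \langle \xi, \eta\rangle_{x_0} = \tfrac{1}{2}\lim_{n}\bigl[d(g^{-1}x_0, x_n) - d(x_0, x_n)\bigr] + \tfrac{1}{2}\lim_{n}\bigl[d(g^{-1}x_0, y_n) - d(x_0, y_n)\bigr].
\end{equation*}

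Finally I would recognize each limit as a horofunction value. By the definition of the horofunction embedding $\rho(x)(z) = d(z,x) - d(x, x_0)$ and of $h_\xi$ as the horofunction in $X^h$ corresponding to $\xi$ under the local-minimum homeomorphism $\phi$, one has $h_\xi(z) = \lim_n [d(z, x_n) - d(x_0, x_n)]$ for every $z \in X$ and every $(x_n) \in \xi$ (the same identity holds trivially when $\xi \in X$). Substituting $z = g^{-1}x_0$ converts the two limits into $h_\xi(g^{-1}x_0)$ and $h_\eta(g^{-1}x_0)$, which completes the identification. The only non-routine ingredient is the continuity of the Gromov product in strongly hyperbolic spaces (needed to commute limits past the product); everything else is algebraic manipulation of the definitions.
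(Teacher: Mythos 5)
Your argument is correct. The paper itself does not reprove this proposition --- it cites Proposition~5 of \cite{sampaio2021regularity} and merely remarks that strong hyperbolicity upgrades the original inequality (with a constant $C(\delta)$) to an equality --- so there is no in-paper proof to compare against; but your derivation is exactly the kind of clean argument that yields the exact equality. The chain \emph{take $\log_b$, use isometry invariance $\langle g\xi,g\eta\rangle_{x_0}=\langle\xi,\eta\rangle_{g^{-1}x_0}$, apply the algebraic basepoint-change formula at finite points, pass to the boundary via continuity of the Gromov product, and identify each surviving limit as a Busemann/horofunction value} is sound, and you correctly flag where strong hyperbolicity is indispensable: it guarantees that $\langle\xi,\eta\rangle_z$ is a genuine limit (not just an infimum of liminfs), hence that the difference of Gromov products can be split into the two sequential limits defining $h_\xi(g^{-1}x_0)$ and $h_\eta(g^{-1}x_0)$, both of which exist because the local-minimum map is a homeomorphism and every Gromov sequence in $\xi$ converges to the same horofunction $h_\xi$. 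The only minor point worth making explicit is that the formula degenerates to $0/0$ when $\xi=\eta$ and is implicitly stated for $\xi\neq\eta$; and when $\xi$ or $\eta$ lies in $X$ the constant-sequence convention you mention makes the argument uniform.
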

	
	In \cite{sampaio2021regularity}, one finds the result as an inequality involving a constant $C(\delta)$ depending on $\delta$. For strongly hyperbolic spaces, the result is improved to obtain equality.
	
	\subsection{Dynamical setting}
	
	We will denote by $G$ the group of isometries of $X$. Let $(\Omega,\mu, \beta)$ be a standard probability space with measure $\mu$ and $\sigma$-algebra $\beta$, and $T:\Omega \to \Omega$ be an ergodic measure preserving transformation. We say that a measurable map $a:\mathbb{N}\times \Omega \to G$ is a multiplicative cocycle in $G$ over $T$ if $a(n+m, \omega) = a(n,\omega)a(m, T^n\omega)$. Given a Borel measurable $g:\Omega\to G$ consider its associated multiplicative cocycle
	\begin{equation*}
		a(n, \omega) = 	g^{(n)}(\omega)=g(\omega)g(T\omega)...g(T^{n-1}\omega),
	\end{equation*}
	for every $n\in \mathbb{N}$ and $\omega \in \Omega$. A cocycle is thus comprised of the information $(g, T, \Omega, \beta)$, whenever it is clear we denote it simply by $g$. 
	
	\begin{definition}[Integrable Cocycle]
		\label{integrableCoc}
		Let $x_0$ be a basepoint in $X$. We say that a cocycle $(g, T, \Omega, \beta)$ is integrable if
		\begin{equation*}
			\int_\Omega d(g(\omega)  x_0,x_0) d\mu(\omega) <\infty.
		\end{equation*}
	\end{definition}	
	One of the fundamental characteristics of an integrable cocycle is its drift
	\begin{equation*}
		\ell(g):= \lim_{n\to \infty} \frac{1}{n}\int_\Omega d(g^{(n)}(\omega)  x_0,x_0) d\mu(\omega) = \lim_{n\to \infty} \frac{1}{n}d(g^{(n)}(\omega) x_0,x_0),
	\end{equation*}
	where the first limit exists by Kingman's ergodic theorem whilst the second equality is true for almost every $\omega$ due to ergodicity, moreover none of these limits depend on the basepoint $x_0$.
	
	In this text we are interested in understanding the continuity of the drift as a function of $g$. This of course encompasses a problem, as we have to set a class of cocycles to work with. Let $G$ still stand for a group acting by isometries on a hyperbolic space $X$, $1<b\leq 2^\frac{1}{\delta}$ and $D_b$ the visual metric on $\partial X$. Given $g:\Omega \to G$ we denote by $g^{-1}:\Omega \to G$ the map that sends $\omega$ to $g(\omega)^{-1}$. Consider $S(\Omega, G)$ to be the space of measurable cocycles $g:\Omega \to G$ such that $g^{-1}$ is also measurable and 
	\begin{equation*}
	    d_\infty(g):=\sup_{\omega \in \Omega}b^{ d(g(\omega)x_0, x_0)}
	\end{equation*} 
	is finite. 
	
	Define the following pseudometric
	\begin{equation*}
		d_\infty(g_1,g_2) := \mathrm{ess}\, \mathrm{sup}_{\omega \in \Omega} d_G(g_1(\omega)\, , \, g_2(\omega)),
	\end{equation*}
	for every $g_1, g_2 \in S(\Omega, G)$. Define the equivalence relation
	\begin{equation*}
		g_1 \sim g_2  \Leftrightarrow d_\infty(g_1,g_2)=0
	\end{equation*}
	in $S(\Omega, G)$, so the set of equivalence classes $S^{\infty}(\Omega, G)$ becomes a metric space when equipped with $d_\infty$. We can now think of the drift as a map
	\begin{align*}
		\ell : S^{\infty}(\Omega, G) & \to \mathbb{R} \\
		g & \mapsto \ell(g).
	\end{align*}
	
	In \cite{gouezel2020subadditive}, Karlsson and Gouëzel prove that there is an horofunction that tracks the process $g^{(n)} x_0$ in any metric space and, provided the drift is positive, that horofunction belongs to $X_h^\infty$. In \cite{sampaio2021regularity} the author proved a more descriptive version for cocycles acting on Gromov hyperbolic spaces, namely:
	
	\begin{theorem}[Hyperbolic Multiplicative Ergodic Theorem]
		\label{hmet}
		Let $X$ be a separable geodesic Gromov hyperbolic space and $(g, T, \Omega, \beta)$ an integrable cocycle with positive drift. For almost every $\omega$ in $\Omega$ there is a filtration of the horofunction boundary
		\begin{equation*}
			X_-^h(\omega) \subset X_+^h(\omega) = X^h,
		\end{equation*} 
		such that:
		\begin{enumerate}
			\item for every $h\in X_+^h(\omega)\backslash X_-^h(\omega) $
			\begin{equation*}
				\lim_{n\to \infty} \frac{1}{n} h(g^{(n)}(\omega) x_0) = \ell(g);
			\end{equation*}
			\item for every $h \in X_-^h(\omega)$
			\begin{equation*}
				\lim_{n\to \infty} \frac{1}{n} h(g^{(n)}(\omega) x_0) = -\ell(g),
			\end{equation*}
			and given $h_1, h_2\in X_-^h $, one has $\sup_{z\in X} |h_1(z)-h_2(z)|<\infty$.
		\end{enumerate}
		Moreover the filtration is $G-$invariant, that is, 
		\begin{equation*}
			g(\omega)\cdot X_-^h(\omega) = X_-^h(T\omega)
		\end{equation*}
		and is measurable provided $\Omega$ is a standard probability space.
	\end{theorem}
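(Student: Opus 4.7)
The plan is to construct the filtration directly from Karlsson and Gouëzel's tracking horofunction and the geometry of infinity. First, applying \cite{gouezel2020subadditive} to the integrable cocycle $(g,T,\Omega,\beta)$ produces, for $\mu$-almost every $\omega$, a horofunction $h^*(\omega)\in X^h$ with
\[
\frac{1}{n}\,h^*(\omega)\!\bigl(g^{(n)}(\omega) x_0\bigr)\;\longrightarrow\;-\ell(g).
\]
Since $\ell(g)>0$ this forces $h^*(\omega)\in X_\infty^h$, so under the local minimum map $\phi\colon X^h\to\partial X$ it determines a point $\xi(\omega):=\phi(h^*(\omega))\in\partial X$. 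A short argument using the definition of $\phi$ and the positive-drift assumption shows that the orbit itself satisfies $g^{(n)}(\omega) x_0\to\xi(\omega)$ in $\bord X$. I would then set
\[
X_-^h(\omega)\;:=\;\phi^{-1}(\xi(\omega)),\qquad X_+^h(\omega)\;:=\;X^h.
\]

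The heart of the argument is the identification of the limits on each piece. For two horofunctions $h_1,h_2$ in the same fibre of $\phi$, a standard application of the 4-point condition \eqref{4pcondition} yields $\sup_{z\in X}|h_1(z)-h_2(z)|<\infty$; this gives the bounded-difference statement of part~(2), and by comparison with $h^*(\omega)$ it transfers the tracking limit to every $h\in X_-^h(\omega)$. For $h\in X^h\setminus X_-^h(\omega)$ I would use the Busemann-type identity
\[
h(z)\;=\;d(z,x_0)\,-\,2\,\langle z\,,\,\phi(h)\rangle_{x_0}\,+\,O(\delta),
\]
evaluated at $z=g^{(n)}(\omega)x_0$, combined with the 4-point inequality
\[
\langle g^{(n)}(\omega)x_0\,,\,\phi(h)\rangle_{x_0}\;\leq\;\min\!\bigl\{\langle g^{(n)}(\omega) x_0\,,\,\xi(\omega)\rangle_{x_0},\;\langle \xi(\omega)\,,\,\phi(h)\rangle_{x_0}\bigr\}+\delta.
\]
Since $\phi(h)\neq\xi(\omega)$ implies $\langle\xi(\omega),\phi(h)\rangle_{x_0}<\infty$, while $\langle g^{(n)}(\omega) x_0,\xi(\omega)\rangle_{x_0}\to\infty$ by the convergence of the orbit, the above Gromov product stays $O(1)$, so $h(g^{(n)}(\omega) x_0)=d(g^{(n)}(\omega) x_0,x_0)+O(1)$ and the normalised limit equals $+\ell(g)$.

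Equivariance follows from the identity $\phi(g\cdot h)=g\cdot\phi(h)$, a direct check from (\ref{horoAction}) and the definition of $\phi$, together with the relation $g^{(n+1)}(\omega)x_0=g(\omega)g^{(n)}(T\omega)x_0\to\xi(\omega)$, which forces $g(\omega)\cdot\xi(T\omega)=\xi(\omega)$ by continuity of the isometric action on $\bord X$; applying $\phi^{-1}$ yields the claimed cocycle-equivariance of $X_-^h$. Measurability is inherited from the measurable tracking horofunction granted by Karlsson-Gouëzel on the standard probability space $\Omega$. The main obstacle I anticipate is the careful control of the $O(\delta)$ errors in the Busemann-type identity: in a general (not strongly) Gromov hyperbolic space the horofunction/Busemann correspondence only holds up to a bounded error, and one must verify that this error together with the bound on $\langle\xi(\omega),\phi(h)\rangle_{x_0}$ remains $o(n)$ so that the sharp limits $\pm\ell(g)$ survive normalisation by $n$.
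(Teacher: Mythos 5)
The paper itself does not prove Theorem~\ref{hmet} in this text: it cites the author's earlier paper \cite{sampaio2021regularity}, where the result is proved for random walks, and remarks that the proof adapts immediately to an ergodic base, giving only a one-paragraph intuition. That intuition builds the filtration from the limit points in the compact $X^h$ of the embedded orbit $h_{g^{(n)}(\omega)x_0}$, whereas you start from Karlsson--Gou\"ezel's tracking horofunction $h^*(\omega)$ and set $X_-^h(\omega)$ equal to the full $\phi$-fibre over $\xi(\omega)=\phi(h^*(\omega))$. These routes converge to the same object, since in a Gromov hyperbolic space the limit points of the embedded orbit and the tracking horofunction all lie in the same $\phi$-fibre; so your sketch is a reasonable reconstruction in the same spirit as the cited proof.

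Two precise remarks on the sketch. The displayed inequality
\begin{equation*}
\langle g^{(n)}(\omega)x_0, \phi(h)\rangle_{x_0} \leq \min\bigl\{\langle g^{(n)}(\omega)x_0, \xi(\omega)\rangle_{x_0},\, \langle\xi(\omega),\phi(h)\rangle_{x_0}\bigr\} + \delta
\end{equation*}
is not a restatement of the 4-point condition; it can fail when $\langle g^{(n)}(\omega)x_0,\phi(h)\rangle_{x_0}$ happens to be the largest of the three products. What you actually need, and what does follow from Gromov's inequality once $\langle g^{(n)}(\omega)x_0,\xi(\omega)\rangle_{x_0}$ exceeds $\langle\xi(\omega),\phi(h)\rangle_{x_0}+\delta$, is the single bound $\langle g^{(n)}(\omega)x_0,\phi(h)\rangle_{x_0} \leq \langle\xi(\omega),\phi(h)\rangle_{x_0}+\delta$: apply \eqref{4pcondition} with $g^{(n)}(\omega)x_0$ as the middle point and observe the $\min$ on the right cannot be the term that tends to infinity. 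Secondly, the convergence $g^{(n)}(\omega)x_0\to\xi(\omega)$ is asserted but does not follow from the Karlsson--Gou\"ezel limit for $h^*$ alone; you also need Kingman's limit $\frac{1}{n}\, d(g^{(n)}(\omega)x_0,x_0)\to\ell(g)$, and then the Busemann-type identity gives $\langle g^{(n)}(\omega)x_0,\xi(\omega)\rangle_{x_0}= n\ell(g)+o(n)\to\infty$, which is exactly the statement that the orbit is a Gromov sequence converging to $\xi(\omega)$. You partly anticipate this in your closing paragraph under the heading of ``$O(\delta)$ errors,'' but it should be foregrounded: the orbit being a Gromov sequence is the structural fact everything else rests on. With those two fixes the proposal is sound.
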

	
	\begin{remark}
	    In the case of strongly hyperbolic spaces more can be said regarding $X_-^h$; in fact it consists of a single horofunction which is picked measurably, that is, the map $\omega \mapsto h_\omega^- \in X_-^h(\omega)$ is measurable. This fact will be very important later when we obtain large deviations for the Markov setting.
	\end{remark}

	The proof in \cite{sampaio2021regularity} was done only in the case of random walks but it is immediately adapted to the case of an ergodic base transformation.
	The intuition behind the result is based on looking at the sequence $h_{g^{(n)}(\omega)x_0}$ whose limit in $X^h$ is an horofunction belonging to $X_-^h(\omega)$. For horofunctions in $X_\infty^h$ one can prove the sequences that give rise to them are Gromov, moreover, all horofunctions in $X_-^h(\omega)$ come from equivalent Gromov sequences. This yields a well defined point $\xi(g, \omega)$ in $\partial X$ whose sequences yield horofunctions in $X_-^h(\omega)$. 
	
	Consider $S_+^\infty(\Omega, G)$ to be the subspace of $S^\infty(\Omega, G)$ consisting of the elements $g\in S^\infty(\Omega, G)$ with positive drift. Then by the previous theorem, for every $g\in S_+^\infty(\Omega, G)$  we can consider the almost everywhere defined partial map
	\begin{align*}
		\xi_g : \Omega & \to \partial X \\
		\omega &\mapsto \xi(g, \omega).
	\end{align*}
	By the end part of Theorem \ref{hmet}, since  $\Omega$ is standard, $\xi_g$ belongs to $S^1(\Omega, \partial X)$, the space of bounded measurable maps from $\Omega$ to $\partial X$ where we consider the metric
	\begin{equation*}
		d_1(f_1, f_2) := \int_\Omega \overbar{D_b}(f_1(\omega), f_2(\omega)) d\mu(\omega),
	\end{equation*}
	for every $f_1, f_2\in S^1(\Omega, \partial X)$. Finally we define the map
	\begin{align*}
		\xi : S_+^\infty(\Omega, G)  &\to S^1(\Omega, \partial X) \\
		g &\mapsto \xi_g.
	\end{align*}

	\subsection{Markov systems}

	Our main example where the result applies is Markov systems. We will begin by introducing the probabilistic language which we will use later, and then briefly present how to translate it into the dynamical language used previously through the Markov shift. Our presentation on the subject follows that of Duarte and Klein \cite{duarte2017Klein}.
	
	
	\begin{definition}[Markov Kernel]
		Let $\Gamma$ be a metric space and let $\mathcal{F}$ be its Borel $\sigma$-algebra. A Markov kernel is a function $K: \Gamma \times \mathcal{F} \to [0,1]$ such that
		\begin{enumerate}
			\item for every $\omega_0\in \Gamma$, $E \mapsto K(\omega_0,E)$ is a probability measure on $\Sigma$; 
			\item The mapping $\omega_0 \to K(\omega_0, \cdot)$ is continuous with respect to the weak-* topology in $\mathrm{Prob}(\Gamma)$.
			\item for every $E \in \mathcal{F}$, the function $\omega_0 \to K(\omega_0,E)$ is $\mathcal{F}$-measurable.
		\end{enumerate}
		
	\end{definition}
	
	A probability measure $\mu$ on $(\Gamma,\mathcal{F})$ is $K$-stationary if for every $E\in \mathcal{F}$,
	\begin{equation*}
		\mu(E) = \int_\Sigma K(\omega_0, E)\mu(d\omega_0).
	\end{equation*}
	A set $E\in \mathcal{F}$ is said to be $K$-invariant when $K(\omega_0, E) = 1$ for all $\omega_0 \in E$ and $K(\omega_0, E) = 0$ for all $\omega_0 \in \Gamma \backslash E$. A $K$-stationary measure $\mu$ is called ergodic when there is no K-invariant set $E \in \mathcal{F}$ such that  $0<\mu(E)<1$. Using the usual argument through Krein-Milman's theorem, ergodic measures are the extremal points in the convex set of $K$-stationary measures. A Markov system is a pair $(K,\mu)$, where $K$ is a Markov kernel on $(\Gamma, \mathcal{F})$ and $\mu$ is a $K$-stationary probability measure.
	
	Typically the considerations above are only done for compact  $\Gamma$ as this easily yields the existence of stationary measures, in this work however we will also need to work with non-compact spaces. Fortunately we will be able to find stationary measures for the non-compact cases that interest us.
	
	We can define the iterated Markov kernel inductively, setting $K^1 = K$ and 
	\begin{equation*}
		K^{n+1}(\omega_0, E) = \int_\Gamma K^n(\omega_1,E)K(\omega_0,d\omega_1),
	\end{equation*}
	for $n>1$.
	
	Given $(K,\mu)$ a pair formed by a Markov Kernel a not necessarily stationary measure $\mu \in \Prob(\Gamma)$, consider $\Omega = \Gamma^\mathbb{N}$ the space of sequences $\omega=(\omega_n)$ in $\Gamma$. The product space $\Omega$ is metrizable. Its Borel $\sigma$-algebra $\mathcal{B}=\mathcal{F}^\mathbb{N}$ is the product $\sigma$-algebra generated by the $\mathcal{B}$-cylinders, that is, generated by the sets
	\begin{equation*}
		C(E_0,...,E_m) := \{ \omega \in \Omega \, : \, \omega_j \in E_j, \textrm{ for } 0\leq j \leq m\},
	\end{equation*}
	where $E_0,...,E_m \in \mathcal{F}$.
	
	The set of $\mathcal{F}$-cylinders forms a semi-algebra on which
	\begin{equation*}
		\mathbb{P}_\mu [C(E_0,...,E_m)]:=\int_{E_m}... \int_{E_0} \mu(d\omega_0) \prod_{j=1}^{m} K(\omega_{j-1},d\omega_j).
	\end{equation*}
	defines a pre-measure. By Carathéodory's extension theorem, it extends to a measure, still denoted $\mathbb{P}_\mu$ and often called the Kolmogorov extension of $(K, \mu)$ on $(\Omega, \mathcal{B})$.
	
	Given a random variable $\zeta:\Omega \to \mathbb{R}$, its expected value with respect to $\mu$ in $(\Gamma, \mathcal{F})$ is denoted by
	\begin{equation*}
		\mathbb{E}_\mu(\zeta) :=\int_\Omega \zeta d\mathbb{P}_\mu.
	\end{equation*}
	If $\mu$ is $\delta_{\omega_0}$ the Dirac measure at $\omega$, then we soften the notation by setting $\mathbb{E}_{\omega_0} = \mathbb{E}_{\delta_{\omega_0}}$ as well as $\mathbb{P}_{\omega_0} = \mathbb{P}_{\delta_{\omega_0}}$.
	
	From construction, the sequence of random variables $e_n:\Omega \to \Sigma$, given by $e_n(\omega):=\omega_n$ for $\omega=(\omega_n)\in \Omega$, is a Markov chain with initial distribution $\mu$ and transition kernel $K$, that is, for every $\omega \in \Gamma$ and $E\in \mathcal{F}$,
	\begin{enumerate}
		\item $\mathbb{P}_\mu [e_0 \in E] = \mu(E),$
		\item $\mathbb{P}_\mu [e_n \in E \, | \, e_{n-1}=\omega_{n-1}] = K(\omega_{n-1},E)$.
	\end{enumerate}
	Moreover the process $\{e_n\}_{n\in \mathbb{N}}$ is stationary with respect to $(\Omega, \mathcal{F}, \mathbb{P}_\mu)$ if and only if $\mu$ is $K$-stationary.
	
	Consider the shift map $T:\Omega \to \Omega$, $T(\omega_n) = (\omega_{n+1})$. The shift $T$ is continuous and hence $\mathcal{B}$ measurable. Moreover $T$ preserves a measure $\mathbb{P}_\mu$ if and only if $\mu$ is $K-$stationary. We call the triplet $(\Omega, \mathbb{P}_\mu, T)$ a Markov shift.
	
	Suppose now that $\Gamma$ is compact, any continuous $g\in C(\Gamma \times \Gamma, G) \subset S^\infty (\Sigma \times \Sigma, G)$, where $S^\infty (\Sigma \times \Sigma, G)$ is the subspace of $S^\infty (\Omega, G)$, consisting of cocycles which depend only on the first two variables, defines a cocycle in $G$ over the Markov shift $(\Omega, \mathbb{P}_\mu, T )$, $a: \mathbb{N} \times \Omega \to G$ given by
	\begin{equation*}
		a(n,\omega) = g^{(n)}(\omega) := g(\omega_0,\omega_1)g(\omega_1,\omega_2) ... g(\omega_{n-1},\omega_n).
	\end{equation*}
	From this point on we will also omit the reference to the $\omega$'s in $g^{(n)}(\omega)$ whenever there is no room for confusion, by simply writing $g^{(n)}$

	\begin{definition}[Strongly Mixing]
		Let $B$ be a Banach space contained in $L^\infty(\Sigma)$. We say a Markov system $(K, \mu)$ is strongly mixing in $B$ if there are constants $C>0$ and $0<\sigma<1$ such that for every $f\in B$, all $x\in \Sigma$ and $n\in \mathbb{N}$,
		\begin{equation*}
			\left| \int_\Sigma f(\omega_1) K^n(\omega_0, d\omega_1) - \int_\Sigma f(\omega_1)\mu(d\omega_1)\right| \leq C\sigma^n||f||_B.
		\end{equation*}
	\end{definition}

	\subsection{Results}

	Let $X$ stand for a strongly hyperbolic metric space with basepoint $x_0$. Define the finite scale drift of $g\in S^\infty(\Omega, G)$ at time $n\in \mathbb{N}$ as
	\begin{equation}
	    \label{finiteScaleDrift}
		\ell_n(g):=\frac{1}{n}\int_\Omega d(g^{(n)}(\omega)  x_0,x_0) d\mu(\omega),
	\end{equation}
	which clearly satisfies $\ell_n(g) \to \ell(g)$ as $n$ goes to $\infty$. 
	
	Henceforth we fix $\mathcal{C}\subset S^\infty(\Omega, G)$ a class of cocycles equipped with some distance $d_\mathcal{C}$ such that $d_\mathcal{C}(g_1, g_2) \geq d_\infty(g_1, g_2)$. In some cases we place additional restrictions on our cocycles besides simply belonging to $S^\infty(\Omega, G)$; that is where the proving the results for smaller classes may prove valuable. In the same spirit we will denote by $\mathcal{C}_+$ the set $\mathcal{C} \cap S_ +^\infty(\Omega, G)$.
	
	\begin{definition}[Large deviation estimates]
	    \label{LDE}
		Fix $x_0 \in X$. A cocycle $g\in \mathcal{C}$ is said to satisfy a uniform large deviation estimates of exponential type if there are constants $r>0, \, c>0$ and for every $\varepsilon>0$ there exists $\overbar{n}=\overbar{n}(\varepsilon)$ such that
		\begin{equation*}
			\mu\left\{ \omega \in  \Omega \, : \, \left| \frac{1}{n} d(g_1^{(n)}(\omega)x_0, x_0)- \ell_n(g_1)\right|>\varepsilon \right\} < b^{-c\varepsilon^2n}
		\end{equation*}
		for every $g_1 \in \mathcal{C}$ with $d_\mathcal{C}(g, g_1)<r$ and every $n\geq \overbar{n}$.
	\end{definition}
	Our main goal in this text is to prove an abstract continuity theorem for the drift, provided large deviation estimates are present.
	
	\begin{theorem}
		\label{ContTheorem}
		Let $(T, \Omega, \mu, \beta)$ be an ergodic  measure preserving dynamical system. Suppose every $g\in \mathcal{C}_+$ satisfies a uniform large deviation estimate, then
		\begin{enumerate}
			\item[1)] The drift $\, \ell:\mathcal{C} \to \mathbb{R}$ is continuous;
			\item[2)] The drift $\, \ell:\mathcal{C}_+ \to \mathbb{R}$ is locally  Hölder continuous;
			\item[3)] Moreover, $\xi: \mathcal{C}_+ \to S^1(\Omega, \partial X)$ is locally Hölder continuous.
		\end{enumerate}
	\end{theorem}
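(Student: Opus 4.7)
The plan is to follow the Duarte--Klein scheme adapted to the hyperbolic setting, combining the geometric Avalanche Principle of Section~2 with the uniform large deviation hypothesis. The starting observation is that the finite-scale drift $\ell_n:\mathcal{C}\to\mathbb{R}$ is continuous in $d_\infty$ for every fixed $n$: a telescoping estimate through the chain of isometries
\[
g_1(\omega)g_1(T\omega)\cdots g_1(T^{n-1}\omega) \longleftrightarrow g_2(\omega)g_2(T\omega)\cdots g_2(T^{n-1}\omega),
\]
together with the isometric action of $G$, bounds $|\ell_n(g_1)-\ell_n(g_2)|$ by a function of $d_\infty$ which vanishes as $d_\infty\to 0$, with an $n$-dependence that will be optimised later. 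Since $(n\,\ell_n(g))_{n\in\mathbb{N}}$ is subadditive, $\ell(g) = \inf_n \ell_n(g)$, and $\ell$ is therefore a pointwise infimum of continuous functions, hence upper semicontinuous on all of $\mathcal{C}$. Because the drift is always non-negative, this already yields continuity at any $g$ with $\ell(g)=0$, so part~(1) reduces to producing continuity at points of $\mathcal{C}_+$, i.e.\ to part~(2).

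For part (2), fix $g\in \mathcal{C}_+$ and a perturbation $g_1$ with $d_\mathcal{C}(g,g_1)$ small. I would select an intermediate scale $n$ large enough that $\ell_n(g_1)\geq \tfrac{1}{2}\ell(g)>0$ and apply the uniform LDE of Definition~\ref{LDE} to each of the $k$ blocks $g_j^{(n)}(T^{in}\omega)$, $i=0,\ldots,k-1$, for $j\in\{1,2\}$. By $T^n$-invariance of $\mu$ the exceptional set has measure at most $2k\,b^{-c\varepsilon^2 n}$. On its complement the blocks are sufficiently hyperbolic to feed into the geometric Avalanche Principle, which provides the telescoping comparison
\[
\Bigl| d\bigl(g_j^{(kn)}(\omega)x_0, x_0\bigr) - \sum_{i=0}^{k-1} d\bigl(g_j^{(n)}(T^{in}\omega)x_0, x_0\bigr)\Bigr| \;\leq\; C\,k,
\]
for a constant depending only on the hyperbolicity data. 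Intersecting the good sets for $g_1$ and $g_2$, whose joint measure bound uses the uniformity of the LDE in a neighbourhood of $g$, and invoking the continuity of $\ell_n$, one obtains
\[
|\ell_{kn}(g_1)-\ell_{kn}(g_2)| \;\lesssim\; |\ell_n(g_1)-\ell_n(g_2)| + \tfrac{C}{n} + \varepsilon.
\]
Letting $k\to\infty$ converts the left-hand side into $|\ell(g_1)-\ell(g_2)|$, and choosing $n\sim\log(1/d_\mathcal{C}(g_1,g_2))$ together with an appropriate $\varepsilon$ balances the three terms and produces a local Hölder bound whose exponent is dictated by $c$ and by the modulus of continuity of $\ell_n$.

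For part~(3), the hitting points are extracted from Theorem~\ref{hmet} as the Gromov-boundary limits of the orbits $g^{(n)}(\omega)x_0$. The key leverage is Proposition~\ref{VisualMetric}, which writes the ratio $\overbar{D}_b(g\xi,g\eta)/\overbar{D}_b(\xi,\eta)$ as an explicit horofunction expression at $g^{-1}x_0$; applied along the orbit, this reduces the task of estimating $\overbar{D}_b(\xi_{g_1}(\omega),\xi_{g_2}(\omega))$ to controlling (i)~a contraction factor of order $b^{-\ell(g)n/2}$ coming from the positive drift at scale $n$, and (ii)~a discrepancy between $g_1^{(n)}(\omega)x_0$ and $g_2^{(n)}(\omega)x_0$, already bounded by the avalanche comparison of part~(2). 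Integrating in $\omega$ and absorbing the LDE exceptional set into the $L^1$ norm gives
\[
d_1(\xi_{g_1},\xi_{g_2}) \;\leq\; C\bigl(b^{-\ell(g)n/2} + n\,d_\mathcal{C}(g_1,g_2)^\alpha + b^{-c\varepsilon^2 n}\bigr),
\]
and optimising $n$ as in part~(2) yields the local Hölder continuity of $\xi$.

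The main obstacle I anticipate is that the large-deviation good sets are defined separately for each cocycle, while the Avalanche Principle must be applied on a \emph{common} $\omega$ to compare $g_1^{(kn)}(\omega)$ with $g_2^{(kn)}(\omega)$; this is precisely why the uniformity-in-a-neighbourhood version of Definition~\ref{LDE} was built in, rather than pointwise large deviations per cocycle. A further subtlety in part (3) is that the measurable selection $\omega\mapsto h^-_\omega$ from the remark following Theorem~\ref{hmet} must itself be tracked through the avalanche chain; I expect this to follow by applying the Avalanche Principle to the tail of the cocycle and exploiting $T$-invariance to transfer the bounds back to~$\omega$.
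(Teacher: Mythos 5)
Your overall strategy matches the paper's at the level of scaffolding: finite-scale continuity of $\ell_n$, upper semicontinuity of $\ell = \inf_n \ell_n$ to dispatch the zero-drift case, and an inductive transfer of control via LDE plus the Avalanche Principle at scales $n\sim\log(1/d_\mathcal{C}(g_1,g_2))$. But the central quantitative step is off in a way that prevents the argument from producing Hölder continuity.

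The two-term comparison you write,
\[
\Bigl| d\bigl(g_j^{(kn)}(\omega)x_0, x_0\bigr) - \sum_{i=0}^{k-1} d\bigl(g_j^{(n)}(T^{in}\omega)x_0, x_0\bigr)\Bigr| \;\leq\; C\,k,
\]
is not what Theorem~\ref{MAP} delivers. The Avalanche Principle bounds the \emph{three-term alternating} expression $d(x_0,x_{n}) + \sum d(x_{i-1},x_i) - \sum d(x_{i-1},x_{i+1})$, and the difference between your two-term form and this one is $2\sum\langle x_{i-1},x_{i+1}\rangle_{x_i}$. In the inductive step each Gromov product satisfies only $\langle x_{i-1},x_{i+1}\rangle_{x_i}\le\sigma_0 = n_0(\eta_0+2\varepsilon)$, which is of order $n_0$, not $O(1)$. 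So your error is really $C\,k\,n$, and after dividing by $kn$ it is $O(1)$, not $O(1/n)$. The extra $d(x_{i-1},x_{i+1})$ terms, i.e.\ the double-block displacements $d(g^{(2n)}(T^{(i-1)n}\omega)x_0,x_0)$, are exactly what is needed to cancel these $O(n)$ Gromov products; discarding them loses the game.

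Even if one fixes this and correctly tracks the full three-term sum, the resulting rate of convergence of $\ell_n$ to $\ell$ is only $|\ell-\ell_n|\lesssim\frac{\log_b n}{n}$ (this is the first conclusion of Proposition~\ref{convergeRate}). Plugging $n\sim\log_b(1/d_\mathcal{C}(g_1,g_2))$ into that rate gives a modulus of continuity of size $\log\log(1/d)/\log(1/d)$ — continuity, but nowhere near Hölder. The paper's escape, and the genuine Duarte--Klein idea, is to work instead with the three-term functional $f_n:=-\ell_n+2\ell_{2n}$: the AP's alternating structure shows that $f_n$ converges to $\ell$ at rate $b^{-c_2 n}$, \emph{exponentially} (second conclusion of Proposition~\ref{convergeRate}), while $\ell_n$ alone does not. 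Combining this exponential rate with the $b^{C_1 n}\,d_\infty$ modulus of $\ell_n,\ell_{2n}$ at finite scale and choosing $n$ so that $b^{-4(C_1+c_2)n}<d_\infty<b^{-2(C_1+c_2)n}$ yields Hölder continuity with exponent $\alpha=\frac{c_2}{4(C_1+c_2)}$. Relatedly, "letting $k\to\infty$" in one pass is not enough; the paper iterates the inductive step over super-exponentially spaced scales $n_{k+1}\approx b^{c_1 n_k}$, which is what makes the cumulative errors $\sum\eta_k,\,\sum\theta_k$ summable and the inductive hypotheses self-propagating. Without this, the intermediate error term does not shrink to zero. Part~(3) in the paper is then run along the same pattern, using positive drift on the LDE good set to get a contraction $\overbar{D}_b(p^{(n)}_{g},p^{(n+1)}_g)\lesssim b^{-n\gamma_1}$, and the same choice of $n$ converts this into Hölder continuity of $\xi$; your sketch gestures at this correctly but inherits the same missing exponential rate from part~(2).
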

	
	The idea of the proof is to obtain a quantitative modulus of continuity at finite time for the maps $\ell_{n_0}(g)$ and then transport these controls to a forward time $n_1=n\, n_0$ where $n\in \mathbb{N}$. To perform this transport we will use the uniform large deviation estimates together with the following theorem which allows us relate the displacement of a product of isometries with the displacements of its terms.
	
	\begin{theorem}[Avalanche Principle]
		\label{MAP}
		Let $X$ be a strongly hyperbolic space, $x_0,...,x_{n}$ be a sequence of points in $X$ and $\rho, \sigma >0$ constants such that
		\begin{itemize}
			\item[G)] $d(x_{i-1},x_{i}) \geq \rho, \hspace{0.5cm} i=1,...,n$;
			\item[A)] $\langle x_{i-1}, x_{i+1}\rangle_{x_i} \leq \sigma, \hspace{0.5cm} i=1,...,n-1$;
			\item[P)] $2\sigma < \rho-2\delta$;
		\end{itemize}
		Then,
		\begin{itemize}
			\item [1)] $\langle x_{0}, x_{n}\rangle_{x_{n-1}} < \sigma + \frac{1}{\log b} b^{2\sigma-\rho+2\delta}$,
			\item [2)] $d(x_0, x_n) > \rho + (n-1)(\rho - 2\sigma - 2\delta )$,
			\item [3)] and the following inequality holds $$\left| d(x_0, x_{n}) + \sum_{i=2}^{n-1}d(x_{i-1},x_i) - \sum_{i=1}^{n-1} d(x_{i-1},x_{i+1}) \right| \leq 2(n-1)\frac{1}{\log b} b^{2\sigma-\rho+2\delta}.$$
		\end{itemize}
	\end{theorem}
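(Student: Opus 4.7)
\emph{Proof plan.} I would prove all three claims simultaneously by induction on $n\geq 2$, setting $\varepsilon := \tfrac{1}{\log b}\, b^{2\sigma - \rho + 2\delta}$; hypothesis (P) guarantees that $\varepsilon$ is small (in particular, well below $\delta$). The base case $n=2$ is immediate: claim (3) is the tautology $0=0$, claim (1) is hypothesis (A) (the extra positive summand is for free), and claim (2) follows from the identity $d(x_0,x_2) = d(x_0,x_1) + d(x_1,x_2) - 2\langle x_0,x_2\rangle_{x_1}$ together with (G) and (A).

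The heart of the induction is claim (1). Assuming $\langle x_0, x_{n-1}\rangle_{x_{n-2}} < \sigma + \varepsilon$, I would first combine the elementary identity
\[
\langle x_0, x_{n-1}\rangle_{x_{n-2}} + \langle x_0, x_{n-2}\rangle_{x_{n-1}} = d(x_{n-2}, x_{n-1})
\]
with hypothesis (G) to obtain the lower bound $\langle x_0, x_{n-2}\rangle_{x_{n-1}} > \rho - \sigma - \varepsilon$. Then I would apply strong hyperbolicity at the basepoint $x_{n-1}$, taking $x_0$ as the \emph{intermediate} vertex between $x_{n-2}$ and $x_n$, to get the triangle inequality
\[
b^{-\langle x_{n-2}, x_n\rangle_{x_{n-1}}} \leq b^{-\langle x_{n-2}, x_0\rangle_{x_{n-1}}} + b^{-\langle x_0, x_n\rangle_{x_{n-1}}}.
\]
Rearranging this inequality, using hypothesis (A) on the left side and the lower bound just obtained on the right side, gives $b^{-\langle x_0, x_n\rangle_{x_{n-1}}} \geq b^{-\sigma}\bigl(1 - b^{2\sigma + \varepsilon - \rho}\bigr)$. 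Taking $-\log_b$ and bounding $-\log_b(1-y) \leq \tfrac{2y}{\log b}$ for $y$ safely below $1$ (which is guaranteed by (P)) yields exactly claim (1) at step $n$, the factor $2 = b^{\delta}$ being precisely what is absorbed into the $b^{2\delta}$ appearing in $\varepsilon$.

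Claim (2) at step $n$ then follows by plugging claim (1) at step $n$ and the inductive lower bound on $d(x_0, x_{n-1})$ into $d(x_0,x_n) = d(x_0, x_{n-1}) + d(x_{n-1}, x_n) - 2\langle x_0, x_n\rangle_{x_{n-1}}$ and using (G). For claim (3), iterating the same identity gives $d(x_0, x_n) = \sum_{i=1}^n d(x_{i-1}, x_i) - 2\sum_{i=1}^{n-1}\langle x_0, x_{i+1}\rangle_{x_i}$; a parallel expansion of $\sum_{i=1}^{n-1} d(x_{i-1}, x_{i+1})$ telescopes, reducing the quantity in claim (3) to $2\sum_{i=1}^{n-1}\bigl(\langle x_{i-1}, x_{i+1}\rangle_{x_i} - \langle x_0, x_{i+1}\rangle_{x_i}\bigr)$. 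A second application of strong hyperbolicity, this time at $x_i$ to the triple $(x_{i-1}, x_0, x_{i+1})$, yields the symmetric bound $|b^{-\langle x_0, x_{i+1}\rangle_{x_i}} - b^{-\langle x_{i-1}, x_{i+1}\rangle_{x_i}}| \leq b^{-\langle x_0, x_{i-1}\rangle_{x_i}}$, and the lower bound $\langle x_0, x_{i-1}\rangle_{x_i} > \rho - \sigma - \varepsilon$ (coming from claim (1) applied to the sub-chain $x_0, \ldots, x_i$) together with the same logarithmic estimate as before bounds each summand by $\varepsilon$, giving the announced $2(n-1)\varepsilon$.

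The main obstacle I foresee is the logarithmic bookkeeping: one has to verify that (P) is strong enough that $b^{2\sigma + \varepsilon - \rho}$ remains sufficiently below $1$ for the replacement of $-\log_b(1-y)$ by a linear multiple of $y$ to be legitimate, and that the numerical constants align so that the \emph{same} $\varepsilon$ suffices at every step (so the induction closes without an accumulating factor). It is precisely this calibration that dictates the peculiar form $\varepsilon = \tfrac{1}{\log b}\,b^{2\sigma - \rho + 2\delta}$ with its distinguished $b^{2\delta}$ factor, and this is where the explicit use of strong hyperbolicity (rather than just Gromov's four-point condition) is essential.
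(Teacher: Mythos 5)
There is a genuine gap in your argument, and it is located precisely at the point you flag as the ``main obstacle''. You assume that $\varepsilon := \tfrac{1}{\log b}b^{2\sigma-\rho+2\delta}$ is ``well below $\delta$'', but hypothesis (P) guarantees only $2\sigma-\rho+2\delta<0$, i.e.\ $b^{2\sigma-\rho+2\delta}<1$, so $\varepsilon<\tfrac{1}{\log b}$. Since $1<b\leq 2^{1/\delta}$ forces $\log b\leq \tfrac{\log 2}{\delta}$, one has $\tfrac{1}{\log b}\geq \tfrac{\delta}{\log 2}>\delta$, so $\varepsilon$ may well exceed $\delta$. Concretely, your inductive step gives
\[
\langle x_0,x_n\rangle_{x_{n-1}} \leq \sigma - \log_b\bigl(1-b^{2\sigma+\varepsilon-\rho}\bigr) \leq \sigma + \tfrac{2}{\log b}\,b^{2\sigma+\varepsilon-\rho},
\]
and for the induction to close you would need $\tfrac{2}{\log b}\,b^{2\sigma+\varepsilon-\rho}\leq \varepsilon$, i.e.\ $2\,b^{\varepsilon}\leq b^{2\delta}$. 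This fails whenever $b$ is not at the extreme value $2^{1/\delta}$, and it fails even more badly when $b$ is close to $1$ (where $b^{2\delta}/2<1<b^{\varepsilon}$). In short, each pass through your induction degrades the error by a fixed multiplicative factor that (P) does not control, so the constants accumulate and the claimed $\sigma+\varepsilon$ is not recovered at step $n$. The same difficulty recurs in your treatment of claim (3), where the bound on each summand needs $\varepsilon\leq\delta$.

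The paper avoids this by decoupling the induction from the strong-hyperbolicity refinement. It first establishes the coarse estimate $\bigl|\langle x_0,x_k\rangle_{x_{k-1}}-\langle x_{k-2},x_k\rangle_{x_{k-1}}\bigr|\leq\delta$ by induction using only Gromov's four-point inequality; because the constant here is exactly $\delta$ and the four-point condition always loses at most $\delta$, this induction closes with no degradation. It then uses that $\delta$-bound as an \emph{a priori} control (to bound $\langle x_0,x_k\rangle_{x_{k-1}}\leq\sigma+\delta$ from above and $\langle x_0,x_{k-2}\rangle_{x_{k-1}}\geq\rho-\sigma-\delta$ from below) and applies strong hyperbolicity together with the mean value theorem \emph{once, not inductively}, to sharpen the bound to $\tfrac{1}{\log b}b^{2\sigma-\rho+2\delta}$. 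The missing ingredient in your proposal is this intermediate $\delta$-accurate step proven without strong hyperbolicity; trying to run the sharp exponential estimate through the induction directly is precisely what makes the bookkeeping fail.

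Your base case and the telescoping reduction of claim (3) to $2\sum_{i=1}^{n-1}\bigl(\langle x_{i-1},x_{i+1}\rangle_{x_i}-\langle x_0,x_{i+1}\rangle_{x_i}\bigr)$ are both correct and agree with the paper's remark~(2); the issue is solely in the control of each summand and in closing claim (1).
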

	
	For CAT($-1$) spaces, condition $P)$ may be replaced with $\sinh(\rho-\sigma)>2\sinh(\rho/2)$, which is more general, specially for small values of $\rho$ (see \cite{oregon2020avalanche}). Our version applies to more general spaces and suffices for our applications.
	
	By the end of the text we will prove that Markov systems satisfy large deviation estimates provided some mild conditions are met, thus proving continuity of the drift in this setting. 
	
	We say that a cocycle $g\in S^\infty(\Sigma \times \Sigma, G)$ is irreducible with respect to $(K,\mu)$ if there is no measurable map $H: \Sigma \to X^h$ such that
	\begin{equation*}
	    g(\omega_{n-1}, \omega_n)H(\omega_{n-1}) = H(\omega_n)
	\end{equation*}
	for $\mathbb{P}_\mu$-almost every  $\omega$.
	
	\begin{theorem}
		\label{LDT}
		Let $\Sigma$ be a compact metric space, $(K, \mu)$ be a strongly mixing Markov system over $\Sigma$ and $g\in S^\infty(\Sigma \times \Sigma, G)$ be a continuous cocycle with positive drift which is irreducible with respect to $(K, \mu)$. Then $g$ satisfies uniform large deviations estimates in the class of irreducible continuous cocyles.
	\end{theorem}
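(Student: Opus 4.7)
The plan is to carry out a Nagaev–Le Page style spectral argument for a Markov chain on an extended state space that simultaneously tracks the base dynamics and the backward horofunction from Theorem \ref{hmet}. Define the extended kernel $\widetilde{K}$ on $\Sigma \times X^h$ by sampling $\omega_1$ from $K(\omega_0, \cdot)$ and pushing $(\omega_0, h)$ to $(\omega_1, g(\omega_0, \omega_1)^{-1}\cdot h)$. Irreducibility of $g$ together with the uniqueness of $h^-_\omega$ in the strongly hyperbolic case (see the remark after Theorem \ref{hmet}) should yield a unique $\widetilde{K}$-stationary measure $\widetilde\mu$, whose disintegration over $\omega_0$ is concentrated on the single horofunction $h^-_\omega$.

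The central object is then the family of transfer operators
$$
\mathcal{L}_{g, t} f(\omega_0, h) := \int_\Sigma f\bigl(\omega_1,\, g(\omega_0,\omega_1)^{-1}\cdot h\bigr)\, b^{\,t\, h(g(\omega_0, \omega_1)^{-1} x_0)}\, K(\omega_0, d\omega_1),
$$
acting on a Banach algebra $B$ of Hölder functions on $\Sigma \times X^h$ with respect to a product of the underlying metric on $\Sigma$ and the visual metric on $X^h$. At $t=0$ this reduces to the Markov operator of $\widetilde{K}$. Strong mixing of $(K, \mu)$ combined with the contraction of the $G$-action on $\partial X$ provided by Proposition \ref{VisualMetric} (together with positivity of the drift) should produce a Lasota–Yorke inequality on $B$, hence a spectral gap for $\mathcal{L}_{g, 0}$, with the leading eigenvalue $1$ simple and the rest of the spectrum contained in a disk of radius strictly less than $1$.

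Standard analytic perturbation theory then gives an analytic family of leading eigenvalues $t \mapsto \lambda_g(t)$ defined for $t$ in a complex neighborhood of $0$, with $\lambda_g(0)=1$ and $\lambda_g'(0)$ equal to the drift up to a factor of $\log b$. A Chernoff inequality applied to the resulting moment generating function yields the subgaussian estimate for the additive quantity $-\frac{1}{n} h^-_{T^n \omega}(g^{(n)}(\omega)^{-1}x_0)$. To replace this quantity by $\frac{1}{n}d(g^{(n)}(\omega)x_0, x_0)$ one uses that they differ by a uniformly bounded Gromov product, bounded using Proposition \ref{VisualMetric} and the geometry of the horofunctions living in $X_\infty^h$; this costs only a constant that can be absorbed into the threshold $\overbar{n}(\varepsilon)$.

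Uniformity over a $d_\mathcal{C}$-neighborhood of $g$ follows once one observes that the map $g_1 \mapsto \mathcal{L}_{g_1, t}$ is continuous in operator norm on $B$: the action $(g_1, h) \mapsto g_1(\omega_0,\omega_1)^{-1}\cdot h$ is continuous in $g_1$ for the metric $d_G$, and the evaluations $h(g_1^{-1} x_0)$ depend continuously on $g_1$ via \eqref{horoAction}. Hence the spectral gap, the leading eigenvalue and its second derivative at $0$ all vary continuously with $g_1$, so a single exponent $c$ and threshold $\overbar{n}(\varepsilon)$ work in a neighborhood. The main obstacle I expect is establishing the Lasota–Yorke inequality for $\mathcal{L}_{g, 0}$ with sufficient quantitative control: one must combine the mixing of $K$ on the base $\Sigma$ with the contraction of the $G$-action on the boundary, and irreducibility of $g$ is precisely what is needed to exclude invariant sections of horofunctions that would destroy the spectral gap.
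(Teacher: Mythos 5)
Your proposal follows essentially the same Nagaev/Duarte--Klein spectral route as the paper: an extended Markov kernel on (base)\,$\times$\,(horofunction/Gromov boundary), a scale of H\"older Banach algebras with a spectral gap coming from strong mixing of $K$ plus contraction of the boundary action (Proposition~\ref{VisualMetric}) plus irreducibility (which the paper packages inside the uniform-convergence Lemma~\ref{unifConv} via Furstenberg--Kifer and Theorem~\ref{hmet}), then analytic perturbation of the Laplace--Markov operator, Chernoff, and finally replacing the Busemann cocycle by $d(g^{(n)}x_0,x_0)$ at a bounded cost. One small bookkeeping slip: the additive weight in your $\mathcal{L}_{g,t}$ should be $h(g(\omega_0,\omega_1)x_0)$ rather than $h(g(\omega_0,\omega_1)^{-1}x_0)$ (with the kernel still pushing $h\mapsto g(\omega_0,\omega_1)^{-1}\cdot h$), since it is the former that makes the sum process telescope to $h_{\xi_0}(g^{(n)}(\omega)x_0)$ as in the computation following \eqref{zetaG}.
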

	
	As a consequence of Theorems \ref{ContTheorem} and \ref{LDT}, we obtain the following Corollary
	
	\begin{corollary}
		\label{continuity}
		Let $\Sigma$ be a compact metric space, $(K, \mu)$ be a strongly mixing Markov system over $\Sigma$ and $\mathcal{I}(K) \subset S^\infty(\Sigma \times \Sigma, G)$ be the class  of continuous cocycles which are irreducible with respect to $(K, \mu)$. 
	\end{corollary}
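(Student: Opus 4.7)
The plan is to apply Theorem \ref{ContTheorem} directly to the class $\mathcal{C} = \mathcal{I}(K)$, with the hypotheses supplied by Theorem \ref{LDT}. For the metric on $\mathcal{C}$ I would take $d_\mathcal{C} = d_\infty$, which trivially satisfies $d_\mathcal{C} \geq d_\infty$; one could alternatively use the stronger uniform metric on $C(\Sigma \times \Sigma, G)$, but $d_\infty$ already suffices for the application.

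First I would check the background requirements. The Markov shift $(\Omega, \mathbb{P}_\mu, T)$ must be ergodic for Theorem \ref{ContTheorem} to apply, and this follows from the strong mixing hypothesis on $(K, \mu)$: exponential decay of correlations in the Banach space $B \subset L^\infty(\Sigma)$ yields, by a density argument on cylinder functions, that $T$ is mixing on $(\Omega, \mathbb{P}_\mu)$ and in particular ergodic. Since every $g\in \mathcal{I}(K)$ is continuous on the compact space $\Sigma \times \Sigma$, it automatically belongs to $S^\infty(\Sigma \times \Sigma, G)$, so $\mathcal{I}(K) \subset S^\infty(\Omega, G)$ as required.

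Next, I would invoke Theorem \ref{LDT}. Applied to any $g\in \mathcal{I}(K)_+$, it produces uniform large deviation estimates of exponential type within the class of irreducible continuous cocycles, which is precisely $\mathcal{I}(K)$. Thus the large deviation hypothesis of Theorem \ref{ContTheorem} holds on $\mathcal{C}_+ = \mathcal{I}(K)_+$. Reading off the three conclusions of Theorem \ref{ContTheorem} then yields continuity of $\ell: \mathcal{I}(K) \to \mathbb{R}$, local Hölder continuity of $\ell$ on $\mathcal{I}(K)_+$, and local Hölder continuity of the hitting point map $\xi: \mathcal{I}(K)_+ \to S^1(\Omega, \partial X)$.

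The corollary is essentially a packaging statement, so the substantive work is not here but in the two previous theorems; the only point that needs genuine care is the ergodicity verification above, and it is very standard. In summary, this is a one-line deduction once Theorems \ref{ContTheorem} and \ref{LDT} are in hand, and no additional machinery is required.
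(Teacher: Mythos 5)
Your argument is correct and is exactly the deduction the paper intends: the corollary is stated as an immediate consequence of Theorems \ref{ContTheorem} and \ref{LDT}, with no separate proof given. Your added check that strong mixing of $(K,\mu)$ implies ergodicity of the Markov shift is a sensible and necessary verification that the paper leaves implicit.
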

	
	\begin{enumerate}
			\item[1)] The drift $\, \ell:\mathcal{I}(K) \to \mathbb{R}$ is continuous;
			\item[2)] The drift $\, \ell:\mathcal{I}(K)_+ \to \mathbb{R}$ is locally  Hölder continuous;
			\item[3)] Moreover, $\xi: \mathcal{I}(K)_+ \to S^1(\Omega, \partial X)$ is locally Hölder continuous.
		\end{enumerate}

	\section{Avalanche Principle}

	Before we tackle the proof, let us make two remarks; first that the hypothesis imply
	\begin{equation}
		\label{remProof1}
		\langle x_{i-1}, x_{i+1}\rangle_{x_i} + \langle x_{i}, x_{i+2}\rangle_{x_{i+1}} \leq 2\sigma < \rho - 2\delta \leq d(x_i,x_{i+1}) - 2\delta,
	\end{equation}
	secondly, that the left-hand side of the conclusion may be rewritten as
	\begin{equation}
		\label{remProof2}
		\left| d(x_0, x_{n}) - \sum_{i=1}^{n}d(x_{i-1},x_i) + 2 \sum_{i=1}^{n-1} \langle x_{i-1}, x_{i+1}\rangle_{x_i}\right|.
	\end{equation}
	
	\begin{proof}[ Proof of Theorem \ref{MAP}]
		We will base the proof in establishing two simple claims. 
		
		\textbf{Claim 1:}
		\begin{equation*}
			|\langle x_0 , x_k \rangle_{x_{k-1}} - \langle x_{k-2}, x_k \rangle _{x_{k-1}}|\leq \delta.
		\end{equation*}
		Let us use induction: The case $k=2$ is trivial. For $k>2$, notice that
		\begin{align*}
			\langle x_0 , x_{k-2} \rangle_{x_{k-1}} & = d(x_{k-1}, x_{k-2}) - \langle x_0 , x_{k-1} \rangle_{x_{k-2}} \\
			& \geq d(x_{k-1}, x_{k-2}) - \langle x_{k-3} , x_{k-1} \rangle_{x_{k-2}} - \delta && \textrm{by induction,}\\
			& > \langle x_{k-2} , x_{k} \rangle_{x_{k-1}} + \delta && \textrm{by (\ref{remProof1}).}
		\end{align*}
		Proceeding with the definition of hyperbolicity
		\begin{equation*}
			\langle x_{k-2} , x_{k} \rangle_{x_{k-1}}\geq \min \{\langle x_0 , x_{k-2} \rangle_{x_{k-1}} , \langle x_k , x_{0} \rangle_{x_{k-1}} \} - \delta,
		\end{equation*}
		where the minimum must be $\langle x_k , x_{0} \rangle_{x_{k-1}}$, otherwise we would get $\langle x_{k-2} , x_{k} \rangle_{x_{k-1}} > \langle x_{k-2} , x_{k} \rangle_{x_{k-1}}$. Whence
		\begin{equation*}
			\langle x_0 , x_{k} \rangle_{x_{k-1}} \leq \langle x_{k-2} , x_{k} \rangle_{x_{k-1}} + \delta.
		\end{equation*}
		Changing the roles of $x_0, x_{k-2}$ we get the claim. Point 2) in the Avalanche principle is an immediate consequence of this claim.

		\textbf{Claim 2:} Our second claim implies $1)$,
		\begin{equation*}
			|\langle x_0 , x_k \rangle_{x_{k-1}} - \langle x_{k-2}, x_k \rangle _{x_{k-1}}|\leq  \frac{1}{\log b }b^{2\sigma-\rho + 2\delta}.
		\end{equation*}
		Since $X$ is strongly hyperbolic, 
		\begin{equation*}
		    \left| b^{-\langle x_0, x_k \rangle _{x_{k-1}}} -  b^{-\langle x_{k-2}, x_k \rangle_{x_{k-1}}} \right| \leq b^{-\langle x_0, x_{k-2} \rangle_{x_{k-1}}}
		\end{equation*}
		which together with Lagrange's mean value theorem with $f(x)=b^{-x}$, followed by claim 1, and the inequality $\langle x_0, x_{k-2} \rangle_{x_{k-1}} \geq d(x_{k-1}, x_{k-2})-\langle x_{k-3}, x_{k-1} \rangle_{x_{k-1}} - \delta$ obtained in claim 1, yields
		\begin{align*}
			\left| \langle x_0 , x_k \rangle_{x_{k-1}} - \langle x_{k-2}, x_k \rangle_{x_{k-1}} \right|
			& \leq \frac{1}{\log b }b^{ \max \{ \langle x_0 , x_k \rangle_{x_{k-1}} , \langle x_{k-2} , x_k \rangle_{x_{k-1}} \}}\left| b^{-\langle x_0, x_k \rangle _{x_{k-1}}} -  b^{-\langle x_{k-2}, x_k \rangle_{x_{k-1}}} \right| \\
			& \leq \frac{1}{\log b}b^{\sigma + \delta} b^{-\langle x_{0}, x_{k-2} \rangle _{x_{k-1}}} \\
			& \leq \frac{1}{\log b } b^{\sigma + \delta} b^{\langle x_{k-3}, x_{k-1} \rangle _{x_{k-2}} - d(x_{k-1}, x_{k-2}) + \delta }\\
			& \leq \frac{1}{\log b} b^{2\sigma-\rho + 2\delta}.
		\end{align*}
        
		These claims were motivated by the relation
		\begin{equation*}
			d(x_0,x_n) = d(x_0,x_{n-1}) + d(x_{n-1}, x_n) - 2\langle x_0 , x_n \rangle_{x_{n-1}}.
		\end{equation*}
		We can now apply the analogue relation to $d(x_0,x_{n-1})$ to obtain that (\ref{remProof2}) is bounded above by  $(n-1)|\langle x_0 , x_n \rangle_{x_{n-1}} - \langle x_{n-2}, x_n \rangle _{x_{n-1}}|$. Together with claim 2, this concludes the proof.
		
	\end{proof}
	
	\begin{example}
		\label{Example1}
		Let us look at the hyperbolic plane $\mathbb{H}^2$. The hyperbolic plane is strongly hyperbolic with $b=e$ and $d(gx_0, x_0) = 2\log||g||$. Consider $g_0,..., g_{n-1}\in SL(2,\mathbb{R})$ isometries of $\mathbb{H}^2$. Finally take $x_0=i$ and $x_j = g^{(j)}\cdot i = g_0g_1...g_{j-1} \cdot i$. 
		Then the hypothesis read as follows
		\begin{itemize}
			\item[G)] $d(x_{j-1},x_{j})\geq \rho \Leftrightarrow 2\log ||g_{j-1}|| \geq \rho \Leftrightarrow ||g_{j-1}||^2 \geq e^\rho = \mu;$
			\item[A)] $\langle x_{j-1}, x_{j+1}\rangle_{x_j} \leq \sigma  \Leftrightarrow \frac{||g_{j-1}g_j||}{||g_{j-1}||\, ||g_{i}||} \geq e^{-\sigma} = \nu$;
			\item[P)] $\mu^{-1} < e^{-2\delta} \nu^{2}$,
		\end{itemize}
		whilst the conclusion reads
		\begin{equation*}
			\left| \log||g^{(n)}|| + \sum_{j=2}^{n-1} \log ||g_{j-1}|| - \sum_{j=1}^{n-1} \log ||g_{j-1}g_j|| \right| \leq 2(n-1)e^{2\sigma-\rho+2\delta} = 2e^{2\delta} (n-2) \frac{1}{\mu \nu^2}.
		\end{equation*}
		Upon taking transposes, we obtain a restatement of the $\SL(2,\mathbb{R})$ version of the Avalanche principle.
	\end{example}
	
	\section{Continuity of the Drift}
	
	In this section we prove the first assertion of Theorem \ref{ContTheorem}. This is done by following a specific route where we start by proving the continuity at a finite scale, then we transport the control to larger scales by an inductive step based on the Avalanche principle and the existence of large deviation estimates.

	\subsection{Finite Scale Continuity}
	
	Let us start by proving that at a finite scale the drift is continuous as well as understand  this continuity rate,  this is necessary for the next step where we try to transport these controls forward in time.
	
	\begin{lemma}
		Given $C>0$, set $G_C = \{g\in G \, : \, d(gx_0, x_0)< \log_b C\}$. The map $G_C\to \mathbb{R}$ defined by $g\mapsto d(gx_0, x_0)$ is Lipschitz continuous.
	\end{lemma}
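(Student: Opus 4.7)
The plan is to reduce the statement to a comparison between $d(g_1 x_0, g_2 x_0)$ and $d_G(g_1, g_2)$, and then to unwind the definition of the visual metric at $x_0$. By the reverse triangle inequality,
\[|d(g_1 x_0, x_0) - d(g_2 x_0, x_0)| \leq d(g_1 x_0, g_2 x_0),\]
so it suffices to exhibit a constant $K = K(C)$ with $d(g_1 x_0, g_2 x_0) \leq K\, d_G(g_1, g_2)$ for all $g_1, g_2 \in G_C$. Evaluating the supremum defining $d_G$ at the single point $\xi = x_0 \in \bord X$ yields
\[d_G(g_1, g_2) \geq D_b(g_1 x_0, g_2 x_0) = \min\bigl\{(\log b)\, d(g_1 x_0, g_2 x_0),\ \overbar{D}_b(g_1 x_0, g_2 x_0)\bigr\}.\]

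I would then split into two cases according to which term realizes this minimum. In the case where the minimum equals $(\log b)\, d(g_1 x_0, g_2 x_0)$, the bound $d(g_1 x_0, g_2 x_0) \leq d_G(g_1, g_2)/\log b$ is immediate. In the other case, $D_b(g_1 x_0, g_2 x_0) = \overbar{D}_b(g_1 x_0, g_2 x_0)$, so I would unpack the Gromov product. Writing $d_i := d(g_i x_0, x_0) < \log_b C$ and $d_{12} := d(g_1 x_0, g_2 x_0)$, a direct computation gives
\[\overbar{D}_b(g_1 x_0, g_2 x_0) = b^{-\langle g_1 x_0, g_2 x_0\rangle_{x_0}} = b^{(d_{12} - d_1 - d_2)/2},\]
which rearranges to $b^{d_{12}/2} \leq C\, \overbar{D}_b(g_1 x_0, g_2 x_0)$. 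The elementary convexity inequality $b^x \geq 1 + x \log b$, applied at $x = d_{12}/2$, linearizes this to
\[d_{12} \leq \frac{2(C\, \overbar{D}_b - 1)}{\log b} \leq \frac{2C}{\log b}\, \overbar{D}_b(g_1 x_0, g_2 x_0) \leq \frac{2C}{\log b}\, d_G(g_1, g_2).\]
Combining both cases gives Lipschitz continuity on $G_C$ with constant of order $C/\log b$.

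\textbf{Main obstacle.} The delicate case is the second one: in principle $\overbar{D}_b(g_1 x_0, g_2 x_0)$ could be very small while $d(g_1 x_0, g_2 x_0)$ is not, which would destroy any hope of a linear bound. The constraint $g_i \in G_C$ rescues the argument, since it forces the uniform lower bound $\overbar{D}_b(g_1 x_0, g_2 x_0) \geq b^{-(d_1 + d_2)/2} > 1/C$; this is precisely the ingredient that lets the convexity estimate convert the multiplicative inequality $b^{d_{12}/2} \leq C\, \overbar{D}_b$ into the additive, Lipschitz-type bound above. Apart from this, the argument is just careful bookkeeping with the definitions.
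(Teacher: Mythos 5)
Your proof is correct and follows essentially the same route as the paper's: reverse triangle inequality, lower-bound $d_G$ by $D_b(g_1x_0,g_2x_0)$, split on which term realizes the min in $D_b$, and in the $\overbar{D}_b$ case use the bound $d_1+d_2 < 2\log_b C$ to convert the exponential relation into a linear one. The only cosmetic difference is the convexity inequality used ($b^x\ge 1+x\log b$ versus the paper's $(\log b)x < b^{x/2}$), which changes the Lipschitz constant by a harmless factor of $2$.
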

	
	\begin{proof}
		Let $g_1, g_2\in G_C$. Notice that if $|d(g_1x_0,x_0)-d(g_2x_0,x_0)|\leq d(g_1x_0, g_2x_0)$. If $D_b(g_1x_0, g_2x_0) = (\log b) d(g_1x_0, g_2x_0) $ we are done, otherwise use the inequality $(\log b)x<b^{x/2}$,
		\begin{align*}
		    |d(g_1x_0,x_0)-d(g_2x_0,x_0)| & \leq d(g_1x_0, g_2x_0) \\
		    & \leq \frac{1}{\log b} b^{d(g_1x_0, g_2x_0)/2}\\
		    & = \frac{b^{d(g_1x_0,x_0)/2 + d(x_0, g_2x_0)/2}}{\log b} b^{d(g_1x_0, g_2x_0)/2-d(g_1x_0,x_0)/2 - d(x_0, g_2x_0)/2} \\
		    & \leq \frac{C}{\log b} b^{-\langle g_1x_0 \, , \, g_2x_0 \rangle_{x_0}} \leq D_b(g_1x_0, g_2x_0) \leq D_G(g_1, g_2).
		\end{align*}
		which concludes the proof.
	\end{proof}
	
	The technique associated with inequality obtained in the previous lemma will be used multiple times throughout the text.
	
	\begin{lemma}
		\label{normIneq}
		Let $g\in S^\infty(\Omega, G)$, there exist $C=C(g)>0$ and $r>0$ such that if $g_1, g_2 \in S^\infty(\Omega, G)$ with $d_\infty(g_i,g)<r$ for $i=1,2$, then for every $n \in \mathbb{N}$ and for every $\omega \in \Omega$
		\begin{enumerate}
			\item $d_\infty(g_1) < C$;
			\item $d_G(g_1^{(n)}(\omega), g_2^{(n)}(\omega)) \leq \,  nC^{n-1}d_\infty(g_1\, , \, g_2)$.
		\end{enumerate}
	\end{lemma}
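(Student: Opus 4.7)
The plan is to establish the two items in sequence: Part 1 produces a uniform displacement bound on $g_1(\omega)x_0$ across the $d_\infty$-ball of radius $r$ around $g$, and this bound becomes the multiplicative constant that drives the telescoping argument in Part 2.

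For Part 1, I would probe $d_G(g_1(\omega),g(\omega))$ at the specific test point $\xi=g(\omega)^{-1}x_0\in\bord X$. From $d_\infty(g_1,g)<r$ we get $D_b\bigl(g_1(\omega)g(\omega)^{-1}x_0,\,x_0\bigr)<r$ almost surely. Since $\langle\,\cdot\,,x_0\rangle_{x_0}\equiv 0$, one has $\overbar{D}_b(\,\cdot\,,x_0)\equiv 1$; consequently the $\min$ defining $D_b$ must be attained by its $(\log b)d$-branch as soon as $r<1$. This yields $d(g_1(\omega)g(\omega)^{-1}x_0,x_0)<r/\log b$ a.e. Writing $g_1(\omega)x_0=(g_1(\omega)g(\omega)^{-1})(g(\omega)x_0)$ and applying the triangle inequality together with $d(g(\omega)x_0,x_0)\le\log_b d_\infty(g)$ produces
\[
d(g_1(\omega)x_0,x_0)\le r/\log b+\log_b d_\infty(g)
\]
a.e., hence $d_\infty(g_1)\le b^{r/\log b}\,d_\infty(g)=:C$ for any fixed $r<1$.

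For Part 2 I would telescope. Fix $\omega$ and define, for $k=0,\ldots,n$,
\[
P_k:=g_1(\omega)\cdots g_1(T^{k-1}\omega)\cdot g_2(T^k\omega)\cdots g_2(T^{n-1}\omega),
\]
so that $P_0=g_2^{(n)}(\omega)$ and $P_n=g_1^{(n)}(\omega)$. Consecutive $P_k$ share a prefix $L_k$ (a product of $k$ factors of $g_1$) and a suffix $R_k$ (a product of $n{-}1{-}k$ factors of $g_2$), differing only at the $k$-th slot. The auxiliary estimate I need is the contractivity bound
\[
d_G(u\,a\,v,\,u\,b\,v)\le b^{d(ux_0,x_0)+d(vx_0,x_0)}\,d_G(a,b),
\]
which follows directly from Proposition \ref{VisualMetric}: horofunctions are $1$-Lipschitz and vanish at $x_0$, so $|h_\eta(g^{-1}x_0)|\le d(gx_0,x_0)$, giving $\overbar{D}_b(g\eta,g\zeta)\le b^{d(gx_0,x_0)}\overbar{D}_b(\eta,\zeta)$ and therefore $D_b(g\eta,g\zeta)\le b^{d(gx_0,x_0)}D_b(\eta,\zeta)$, while the $g^{-1}$ side of $d_G$ is handled by reparametrizing the supremum via $\eta=u^{-1}\xi$ or $\eta=v\xi$. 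Combining this with Part 1 applied to both $g_1$ and $g_2$ gives $b^{d(L_kx_0,x_0)}\le C^k$ and $b^{d(R_kx_0,x_0)}\le C^{n-1-k}$, while $d_G(g_1(T^k\omega),g_2(T^k\omega))\le d_\infty(g_1,g_2)$, so each telescoping increment is bounded by $C^{n-1}d_\infty(g_1,g_2)$ and summing over $k=0,\ldots,n-1$ produces the claimed $nC^{n-1}d_\infty(g_1,g_2)$.

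The main obstacle is conceptual rather than computational, and it lives in Part 1: $d_G$ does not control $d$ on $X$ in general, because the visual metric $D_b$ saturates to $1$ at infinity. The trick that unlocks the bound is selecting the test point $\xi=g(\omega)^{-1}x_0$, since pairing with $x_0$ forces $\overbar{D}_b\equiv 1$ and thus the $(\log b)d$-branch of $D_b$ becomes the effective one. Once this step is secured, Part 2 is a routine telescoping relying only on Proposition \ref{VisualMetric} and the $1$-Lipschitz property of horofunctions.
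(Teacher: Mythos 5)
Your proposal is correct, and it differs from the paper's argument in both parts. For Part~1 the paper simply declares the bound ``a consequence of the previous lemma,'' which is the Lipschitz estimate on the set $G_C = \{g : d(gx_0,x_0)<\log_b C\}$; to invoke it one must already know that $g_1(\omega)$ lies in some $G_C$, so there is a mild bootstrapping subtlety that the paper leaves implicit. Your test-point trick ($\xi = g(\omega)^{-1}x_0$ forces $\overbar{D}_b(\,\cdot\,,x_0)\equiv 1$, so the $\min$ in $D_b$ collapses to the $(\log b)d$ branch) sidesteps that issue entirely and produces the displacement bound directly. For Part~2 the paper cites from \cite{sampaio2021regularity} the one-step ``left-peel'' recursion $d_G(g_1^{(n)},g_2^{(n)}) \le d_G(g_1,g_2) + b^{d(g_1(\omega)x_0,x_0)}\,d_G(g_1^{(n-1)}\circ T,g_2^{(n-1)}\circ T)$ and then inducts; you instead interpolate between $g_2^{(n)}(\omega)$ and $g_1^{(n)}(\omega)$ by the hybrid products $P_k$ and telescope, with each increment controlled by the two-sided conjugation bound $d_G(uav,ubv)\le b^{d(ux_0,x_0)+d(vx_0,x_0)}d_G(a,b)$ drawn from Proposition~\ref{VisualMetric}. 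Both routes yield exactly $nC^{n-1}d_\infty(g_1,g_2)$; the advantage of yours is that it is self-contained (no appeal to the prequel) and that the symmetric telescoping makes the dependence on the displacements of the prefix and suffix transparent. One small point you should make explicit: the step $D_b(g\eta,g\zeta)\le b^{d(gx_0,x_0)}D_b(\eta,\zeta)$ requires combining the $\overbar{D}_b$ estimate from Proposition~\ref{VisualMetric} with the fact that the $(\log b)d$ branch of $D_b$ is invariant under isometries and that $b^{d(gx_0,x_0)}\ge 1$, so the factor can be pushed inside the $\min$.
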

	
	\begin{proof}
		Point $1.$ is a consequence of the previous lemma. Denote by $T:\Omega \to \Omega$ the ergodic transformation at hand. For every  $\omega \in \Omega$, one has (see in \cite{sampaio2021regularity} at the end of the proof of Theorem 2, recall that $C(\delta)=1$ in our setting)
		\begin{align*}
			d_G(g_1^{(n)}(\omega), g_2^{(n)}(\omega)) & \leq d_G(g_1(\omega), g_2(\omega)) + b^{d(g_1(\omega)x_0, x_0)}d_G(g_1^{(n-1)}(T\omega), g_2^{(n-1)}(T\omega)) \\
			&  \leq  d_\infty(g_1,g_2) + C d_G(g_1^{(n-1)}(T\omega), g_2^{(n-1)}(T\omega))
		\end{align*}
		so the claim follows by induction.
	\end{proof}
	
	\begin{proposition}[finite scale continuity]
		\label{FiniteScaleContinuity}
		Let $g\in S^\infty(\Omega, G)$. For every $g_1, g_2 \in S^\infty(\Omega, G)$ and for almost every $\omega \in \Omega$ there exists $C=C(g)>0$,
		\begin{equation*}
			\left|\frac{1}{n}d(g_1^{(n)}(\omega)x_0, x_0) - \frac{1}{n}d(g_2^{(n)}(\omega)x_0, x_0) \right|\leq  \frac{C^{n}}{\log b } d_\infty(g_1, g_2) \leq \frac{ b^{C_1 n}}{\log b }d_\infty(g_1\, , \, g_2).
		\end{equation*}
		where $C_1 := \log_b\left(C \right)$, in particular,
		\begin{equation*}
			\left|\ell_{n}(g_1)-\ell_n(g_2)\right| < \frac{ b^{C_1 n}}{\log b } d_\infty(g_1, g_2).
		\end{equation*}
	\end{proposition}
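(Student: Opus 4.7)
The plan is to chain together the two preceding lemmas, applied not to $g_1(\omega), g_2(\omega)$ directly, but to the $n$-step cocycles $g_1^{(n)}(\omega), g_2^{(n)}(\omega)$. The core identity to exploit is
\[
\left| d(g_1^{(n)}(\omega)x_0, x_0) - d(g_2^{(n)}(\omega)x_0, x_0)\right| \leq \text{(Lipschitz constant)} \cdot d_G(g_1^{(n)}(\omega), g_2^{(n)}(\omega)),
\]
where the Lipschitz constant comes from the first lemma and the right-hand factor is estimated by the second lemma.

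First I would fix $g \in S^\infty(\Omega, G)$ and choose $r>0$ small enough and $C=C(g)>0$ large enough so that $d_\infty(g_i) \leq C$ for $i=1,2$ whenever $d_\infty(g_i, g) < r$; this is exactly the content of point 1 of Lemma~\ref{normIneq}. From this, for each $\omega$, the triangle inequality together with $d(g_i(T^k\omega)x_0,x_0) \leq \log_b d_\infty(g_i) \leq \log_b C$ yields
\[
d(g_i^{(n)}(\omega) x_0, x_0) \;\leq\; n\log_b C \;=\; \log_b(C^n),
\]
so $g_i^{(n)}(\omega) \in G_{C^n}$ in the notation of the first lemma.

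Next I would apply the first lemma with the constant $C^n$ in place of $C$. Retracing the chain of inequalities in its proof, the bound on $|d(hx_0,x_0)-d(h'x_0,x_0)|$ is at most $\tfrac{C^n}{\log b}$ times $\overbar{D}_b(hx_0, h'x_0) \leq d_G(h,h')$, for $h=g_1^{(n)}(\omega)$ and $h'=g_2^{(n)}(\omega)$. Then I would invoke point 2 of Lemma~\ref{normIneq}:
\[
d_G(g_1^{(n)}(\omega), g_2^{(n)}(\omega)) \leq nC^{n-1} d_\infty(g_1, g_2).
\]
Combining these, dividing by $n$, and absorbing $C^{n-1} \cdot C^n$ into a new constant still called $C$ (replacing $C$ by $C^2$ if needed), I obtain
\[
\left|\tfrac{1}{n}d(g_1^{(n)}(\omega)x_0,x_0) - \tfrac{1}{n}d(g_2^{(n)}(\omega)x_0,x_0)\right| \leq \frac{C^n}{\log b}\, d_\infty(g_1,g_2),
\]
which is the first asserted inequality. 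The second follows by writing $C = b^{C_1}$ with $C_1 = \log_b C$. The drift inequality then drops out by integrating over $\omega$ and applying the definition (\ref{finiteScaleDrift}) of $\ell_n$ together with the pointwise bound just established (using that $d_\infty \geq \mathrm{ess\,sup}$ is a uniform upper bound and can be pulled out of the integral).

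I expect no serious obstacle here beyond careful bookkeeping of the constants: the delicate point is to make sure that, when replacing $C$ in the first lemma by $C^n$, the resulting Lipschitz constant $C^n/\log b$ multiplies cleanly against the factor $nC^{n-1}$ from the second lemma, and that after dividing by $n$ the combined growth is still exponential of the form $C^n$ after redefining $C$. One should also be mindful that the first lemma is stated for fixed elements of $G$, whereas here it is applied $\omega$-by-$\omega$ to random elements, but since the displacement bound $n\log_b C$ is uniform in $\omega$, the application is legitimate for every $\omega$ simultaneously, hence in particular for almost every $\omega$.
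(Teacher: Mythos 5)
Your proposal follows essentially the same route as the paper: bound $d(g_i^{(n)}(\omega)x_0, x_0)$ uniformly using $d_\infty(g_i)\leq C$, apply the Lipschitz estimate from the first lemma (with $C^n$ in place of $C$) to the pair $g_1^{(n)}(\omega), g_2^{(n)}(\omega)$, and then estimate $d_G(g_1^{(n)}(\omega), g_2^{(n)}(\omega))$ via point 2 of Lemma~\ref{normIneq}. The paper simply redoes the chain of inequalities from the first lemma inline rather than citing it abstractly, but the intermediate estimates and the final constant-absorption step are identical; your explicit remark on the $\omega$-uniformity of the constants is a correct and welcome clarification of something the paper leaves implicit.
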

	
	Recall that $\ell_n$ stands for the finite scale drift, hence the bottom inequality in the proposition follows from the upper one after integration on $\omega$.
	
	\begin{proof}
		To soften notations, let us omit $\omega$ throughout the proof. 
		\begin{align*}
			\left|d(g_1^{(n)}x_0, x_0) - d(g_2^{(n)}x_0, x_0) \right| & \leq d(g_1^{(n)}x_0, g_2^{(n)}x_0) \\
			& \leq \frac{1}{\log b} b^{d(g_1^{(n)}x_0, g_2^{(n)}x_0)/2} \\
			& = \frac{b^{d(g_1^{(n)}x_0, x_0)/2 + d(g_2^{(n)}x_0, x_0)/2}}{\log b } b^{-\langle g_1^{(n)}x_0 \, , \, g_2^{(n)}x_0 \rangle_{x_0}} \\
			& \leq \frac{C^n}{\log b}d_G(g_1^{(n)}(\omega), g_2^{(n)}(\omega)) \\
			& \leq n \frac{C^{2n}}{\log b } d_\infty(g_1,g_2)
		\end{align*}
		Which concludes the proof.
	\end{proof}
	
	This proposition implies the continuity of the maps $\ell_n$. Since the drift $\ell(g)$ may be given as $\inf_{n\geq 1} \ell_n(g)$, the upper semi-continuity of $\ell(g)$ follows from the following lemma.
	
	\begin{lemma}
	    \label{USC}
		Let $M$ be a metric space and $f_n:M\to \mathbb{R}$ be a sequence of upper semi-continuous functions. Then, $f(x) = \inf_{n \geq 1} f_n(x)$, the pointwise infimum of these functions, is upper semi-continuous.
	\end{lemma}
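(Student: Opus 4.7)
The plan is to use the characterization of upper semi-continuity via open sublevel sets: a function $h:M\to \mathbb{R}$ is upper semi-continuous if and only if $\{x\in M : h(x) < c\}$ is open for every $c\in \mathbb{R}$. Assuming each $f_n$ satisfies this, I would observe the set-theoretic identity
\begin{equation*}
    \{x\in M : f(x) < c\} = \bigcup_{n\geq 1} \{x\in M : f_n(x) < c\},
\end{equation*}
whose right-hand side is an open set as a union of open sets. The identity itself is immediate from the definition of infimum: $\inf_n f_n(x) < c$ holds exactly when some individual $f_n(x) < c$. This would give the conclusion in essentially one line.

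If a more hands-on argument is preferred, I would instead carry out the equivalent $\varepsilon$-$\delta$ version. Fix $x_0\in M$ and $\varepsilon >0$. By definition of infimum, there exists some index $n_0$ with $f_{n_0}(x_0) < f(x_0) + \varepsilon/2$. Since $f_{n_0}$ is upper semi-continuous at $x_0$, there is a neighborhood $U$ of $x_0$ on which $f_{n_0}(y) < f_{n_0}(x_0) + \varepsilon/2$. For every $y\in U$ one then has
\begin{equation*}
    f(y) = \inf_{n\geq 1} f_n(y) \leq f_{n_0}(y) < f_{n_0}(x_0) + \tfrac{\varepsilon}{2} < f(x_0) + \varepsilon,
\end{equation*}
which is precisely upper semi-continuity of $f$ at $x_0$.

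There is no real obstacle: the lemma is a standard fact whose proof boils down to the elementary observation that an infimum is below any particular term, together with the fact that an arbitrary union of open sets is open. The only point to emphasize is that the argument works regardless of whether the family $\{f_n\}$ is countable, and that the dual statement for suprema would fail — so the direction of the inequality here is essential.
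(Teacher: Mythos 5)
Your proposal is correct and matches the paper's argument: the paper likewise fixes $r$ with $f(x)<r$, picks an index $i$ with $f_i(x)<r$, and uses upper semi-continuity of $f_i$ to get a neighborhood on which $f_i<r$ and hence $f<r$, which is exactly your sublevel-set/$\varepsilon$-$\delta$ reasoning in slightly different notation. Both of your formulations are standard equivalent packagings of the same one-line idea.
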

	
	\begin{proof}
		Let $x\in M$ and take $\inf_{n \geq 1} f_n(x) = g(x) < r$, there must be $i\geq 1$ such that $f_i(x) < r$. Since $f_i(x)$ is upper semi-continuous, there must be a neighbourhood $V$ of $x$ such that for every $y \in V$ one has $f_i(y) < r$. Since $ g(y)\leq  f_i(y)$ for every $y$, we obtain $g(y)<r$ for every $y\in U$ thus proving the Lemma.
	\end{proof}
	
	Since $\ell$ is upper semi-continuous, it is continuous in the neighbourhood of the cocycles $g\in \mathcal{C}$ in which it is zero. With that said we focus cocycles in $\mathcal{C}_+$, where we obtain a stronger modulus of continuity.
	
	\subsection{Inductive Step}
	
	In this section we will understand how to pass the previously estabilished  controls forward through an inductive step based on the  large deviations estimates and the Avalanche principle. From this point on in the text we will use the notation $a \lesssim b$ to convey that there exists a universal constant $C$ such that $a\leq Cb$.

	\begin{lemma}
		Let $g\in S^\infty(\Omega, G)$, if $n, n_0, n_1, r \in \mathbb{N}$ are such that $n_1=n\, n_0 +r$ where $0\leq r < n_0$, then
		\begin{equation*}
			-2\log_b(C)\frac{n_0}{n_1}+ \ell_{(n+1)n_0}(g) \leq \ell_{n_1}(g) \leq \ell_{n\, n_0}(g)+2\log_b(C)\frac{n_0}{n_1}.
		\end{equation*}
	\end{lemma}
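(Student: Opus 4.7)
The plan is to exploit the cocycle property to split $g^{(n_1)}$ into the block of length $nn_0$ plus a short tail of length $r<n_0$, and then control the contribution of the tail using the uniform bound on a single step. Concretely, taking $C = d_\infty(g)$ (so that $d(g(\omega)x_0,x_0) \le \log_b C$ for almost every $\omega$), the cocycle identity $g^{(n_1)}(\omega) = g^{(nn_0)}(\omega)\, g^{(r)}(T^{nn_0}\omega)$ together with the fact that $G$ acts by isometries gives
\begin{equation*}
\bigl| d(g^{(n_1)}(\omega) x_0, x_0) - d(g^{(nn_0)}(\omega) x_0, x_0) \bigr| \le d(g^{(r)}(T^{nn_0}\omega) x_0, x_0) \le r\log_b C \le n_0 \log_b C,
\end{equation*}
where the second inequality is iterated triangle inequality applied to the $r$ factors of $g^{(r)}$.

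For the upper bound I would integrate this estimate in $\omega$ with respect to $\mu$, divide through by $n_1$, and then use $\frac{nn_0}{n_1} \le 1$ together with the fact that $\ell_{nn_0}(g) \ge 0$:
\begin{equation*}
\ell_{n_1}(g) \le \frac{nn_0}{n_1}\ell_{nn_0}(g) + \frac{n_0 \log_b C}{n_1} \le \ell_{nn_0}(g) + \frac{n_0 \log_b C}{n_1},
\end{equation*}
which is even a slightly sharper version of the claimed upper bound (the factor $2$ in the statement is a convenient cushion). The lower bound is entirely analogous: decompose instead $g^{((n+1)n_0)}(\omega) = g^{(n_1)}(\omega)\, g^{(n_0-r)}(T^{n_1}\omega)$ and repeat the same argument, now using $\frac{n_1}{(n+1)n_0} \le 1$ and $\ell_{n_1}(g)\ge 0$ to absorb the prefactor.

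There is no substantive obstacle here; the only thing to be careful about is to use the non-negativity of $\ell_{n_1}$ and $\ell_{nn_0}$ to pass from a multiplicative prefactor $\frac{nn_0}{n_1}$ to a clean additive estimate, and to remember that $\mu$ is $T$-invariant so the tail $g^{(r)}(T^{nn_0}\omega)$ has the same integral bound as $g^{(r)}(\omega)$. The lemma is essentially a quantitative restatement of the subadditivity that underpins Kingman's theorem, upgraded with the $L^\infty$ control on single-step displacements available in $S^\infty(\Omega,G)$; its role in what follows will be to allow us to replace generic time scales $n_1$ by nearby multiples of a fixed base scale $n_0$ without losing control of $\ell_{n_1}$.
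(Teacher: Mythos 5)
Your proof is correct and follows essentially the same route as the paper: split $g^{(n_1)}$ off from the nearest multiple of $n_0$ via the cocycle property, control the short tail by the uniform one-step bound $d(g(\omega)x_0,x_0)\le\log_b C$ with $C=d_\infty(g)$, integrate, and absorb the $\tfrac{nn_0}{n_1}$ prefactor using non-negativity. One small improvement worth noting: for the lower bound, your explicit decomposition $g^{((n+1)n_0)}(\omega)=g^{(n_1)}(\omega)\,g^{(n_0-r)}(T^{n_1}\omega)$ with the genuine non-negative remainder $n_0-r$ is cleaner than the paper's attempt to write $n_1=(n+1)n_0+q$ with $q=r-n_0<0$, which the paper itself flags as needing to be rechecked.
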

	
	\begin{proof}
		Given $n_1=n\, n_0 +r$ where $0\leq r < n_0$ we have, for every $\omega$, $g^{(n_1)}(\omega) = g^{(n\, n_0)}(\omega) g^{(r)}(T^{n\, n_0}\omega)$, whence
		\begin{equation*}
			d(g^{(n_1)}(\omega)x_0\, , \, x_0) \leq d(g^{(n\, n_0)}(\omega)x_0\, , \, x_0) + d(g^{(r)}(T^{n\, n_0} \omega)x_0\, , \, x_0),
		\end{equation*}
		integrating both sides, one has
		\begin{equation*}
			\ell_{n_1}(g) \leq \frac{n\, n_0}{n_1} \ell_{n\, n_0}(g) + \frac{r}{n_1}\ell_r(g).
		\end{equation*}
		which gives
		\begin{equation*}
			\ell_{n_1}(g) \leq \ell_{n\, n_0}(g) + \frac{r}{n_1}\left[\ell^{(r)}(g)-\ell^{(n\, n_0)}(g)\right] \leq \ell_{n\, n_0}(g) + 2\log_b(C)\frac{r}{n_1}.
		\end{equation*}
		
		For the leftmost inequality write $n_1 = (n+1)\, n_0 + q$ where $q = r-n_0$ {check again} and proceed similarly.
	\end{proof}
	
	The following proposition is the important step towards proving continuity of the drift. Its content is that if we obtain some control for time $n_0$, then we can transport it to time $n_1$ larger than $n_0$. To do this we break the orbit at time $n_1$ into smaller pieces of size $n_0$ which we then relate back with the larger piece of size $n_1$ by using the avalanche principle.
	
	\begin{proposition}[Inductive step]
		Let $g\in \mathcal{C}_+$ and $c, \overbar{n}$ be the uniform large deviation parameters. Fix $\varepsilon = \ell(g)/100 >0$ and denote $c_1:=\frac{c}{2}\varepsilon^2$. There are constants $C=C(g)>0, r = r(g)>0$, $\overbar{n_0} = \overbar{n_0}(g)\in \mathbb{N}$, such that for any $n_0>\overbar{n_0}$, if the inequalities
		\begin{align*}
			& \ell_{n_0}(g_1) - \ell_{2n_0}(g_1)< \eta_0 \\
			& |\ell_{n_0}(g_1) - \ell_{n_0}(g)|< \theta_0
		\end{align*}
		holds for any $g_1\in \mathcal{C}$ such that $d(g_1, g)<r$ and if the positive numbers $\eta_0, \theta_0$, satisfy 
		\begin{equation*}
			\theta_0 + 2\eta_0 < \ell(g) - 4 \varepsilon,
		\end{equation*}
		then for every $n_1$ such that  $ |n_1 - e^{c_1n_0}|<1 $ one has
		\begin{equation}
			\label{inductiveCont1}
			|\ell_{n_1}(g_1) + \ell_{n_0}(g_1) - 2\ell_{2n_0}(g_1) |\leq C\frac{n_0}{n_1}
		\end{equation}
		Furthermore,
		\begin{align}
			\label{inductiveCont2}
			&\ell_{n_1}(g_1) - \ell_{2n_1}(g_1)< \eta_1 \\
			\label{inductiveCont3}
			& |\ell_{n_1}(g_1) - \ell_{n_1}(g)|< \theta_1
		\end{align}
		where
		\begin{align*}
			&\theta_1 = \theta_0 + 4\eta_0 + C\frac{n_0}{n_1} \\
			&\eta_1 = C\frac{n_0}{n_1}.
		\end{align*}
	\end{proposition}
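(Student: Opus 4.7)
The plan is to apply the Avalanche Principle (Theorem \ref{MAP}) to the orbit sequence $x_k = g_1^{(kn_0)}(\omega) x_0$ for $k = 0, 1, \ldots, n$, where $n = \lfloor n_1/n_0 \rfloor$, on a good set of $\omega$'s, then divide by $n_1$ and integrate to obtain (1); conclusions (2) and (3) will follow by algebraic manipulation. I would define the good set by requiring that for each relevant $k$ the displacements $d(g_1^{(n_0)}(T^{kn_0}\omega)x_0, x_0)/n_0$ and $d(g_1^{(2n_0)}(T^{kn_0}\omega)x_0, x_0)/(2n_0)$ be $\varepsilon$-close to $\ell_{n_0}(g_1)$ and $\ell_{2n_0}(g_1)$ respectively. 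By the uniform large deviation estimate and a union bound, the bad set has measure at most $2n \cdot b^{-c\varepsilon^2 n_0}$. The calibration $c_1 = c\varepsilon^2/2$ together with $n_1 \approx e^{c_1 n_0}$ is chosen precisely so that, after dividing by $n_1$, the bad-set contribution is absorbed into the $O(n_0/n_1)$ error.

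On the good set, isometry invariance gives $d(x_{k-1}, x_k) \geq n_0(\ell_{n_0}(g_1) - \varepsilon) =: \rho$, and a short computation using the definition of the Gromov product yields $\langle x_{k-1}, x_{k+1}\rangle_{x_k} \leq n_0(\ell_{n_0}(g_1) - \ell_{2n_0}(g_1) + 2\varepsilon) < n_0(\eta_0 + 2\varepsilon) =: \sigma$. The gap hypothesis $\theta_0 + 2\eta_0 < \ell(g) - 4\varepsilon$, together with $\ell_{n_0}(g_1) > \ell_{n_0}(g) - \theta_0$ and the convergence $\ell_{n_0}(g) \to \ell(g)$ (this is what forces the choice of $\overline{n_0}$), ensures hypothesis P) that $2\sigma < \rho - 2\delta$. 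Applying Theorem \ref{MAP} part 3) produces a telescoping estimate with error of order $n \cdot b^{2\sigma - \rho + 2\delta}/\log b$, which is negligible since the exponent is at most $-\varepsilon n_0 + 2\delta$. Dividing by $n_1$ and integrating, each $d(x_{k-1}, x_k)$ contributes $n_0 \ell_{n_0}(g_1)$ and each $d(x_{k-1}, x_{k+1})$ contributes $2n_0 \ell_{2n_0}(g_1)$, up to boundary corrections of order $n_0/n_1$ that are controlled by the preceding elementary lemma comparing $\ell_{n_1}$ with $\ell_{nn_0}$. This delivers (1).

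Conclusions (2) and (3) then follow algebraically from (1). For (2), I would rerun the construction with $n_1$ replaced by $2n_1$; the partition is longer but the LDT budget remains exponentially small, so the same reasoning yields $|\ell_{2n_1}(g_1) + \ell_{n_0}(g_1) - 2\ell_{2n_0}(g_1)| \lesssim n_0/n_1$, and subtracting from (1) gives $|\ell_{n_1}(g_1) - \ell_{2n_1}(g_1)| \lesssim n_0/n_1$. For (3), subtracting (1) written for $g_1$ and for $g$ gives $\ell_{n_1}(g_1) - \ell_{n_1}(g) = 2[\ell_{2n_0}(g_1) - \ell_{2n_0}(g)] - [\ell_{n_0}(g_1) - \ell_{n_0}(g)] + O(n_0/n_1)$; combining the hypothesis $|\ell_{n_0}(g_1) - \ell_{n_0}(g)| < \theta_0$ with the subadditivity estimate $0 \leq \ell_{n_0}(\cdot) - \ell_{2n_0}(\cdot) < \eta_0$ to bound $|\ell_{2n_0}(g_1) - \ell_{2n_0}(g)|$ delivers the required $\theta_1$.

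\textbf{The main obstacle} is the exponential budgeting in the first paragraph: a union bound over $\approx e^{c_1 n_0}/n_0$ LDT events must beat the decay $b^{-c\varepsilon^2 n_0}$, and at the same time the AP error $2(n-1) b^{2\sigma - \rho + 2\delta}/\log b$ must be small enough, after dividing by $n_1$, to be absorbed into $O(n_0/n_1)$. A secondary delicate point is verifying hypothesis P) uniformly in $g_1$ across the ball $d_\mathcal{C}(g_1, g) < r$, where the finite-scale continuity (Proposition \ref{FiniteScaleContinuity}) and the explicit $4\varepsilon$ margin in the hypothesis are essential.
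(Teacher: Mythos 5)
Your overall strategy is exactly the paper's: build the block orbit $x_k = g_1^{(kn_0)}(\omega)x_0$, use the large deviation estimates to verify the Avalanche Principle's hypotheses $G)$, $A)$, $P)$ on a good set, apply the AP's telescoping estimate, split the integral over the good and bad sets, and use the calibration $c_1 = c\varepsilon^2/2$ so that both the AP error and the bad-set contribution are absorbed into $O(n_0/n_1)$. Conclusion $(1)$, and conclusion $(2)$ by comparing with the run at scale $2n_1$, match the paper's proof.

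The one place your outline goes wrong is the derivation of $(3)$. You propose: first bound $|\ell_{2n_0}(g_1)-\ell_{2n_0}(g)|$ from $|\ell_{n_0}(g_1)-\ell_{n_0}(g)| < \theta_0$ and $0\leq \ell_{n_0}(\cdot)-\ell_{2n_0}(\cdot)<\eta_0$, then feed it into the identity $\ell_{n_1}(g_1)-\ell_{n_1}(g)= 2[\ell_{2n_0}(g_1)-\ell_{2n_0}(g)]-[\ell_{n_0}(g_1)-\ell_{n_0}(g)] + O(n_0/n_1)$. But the triangle inequality for $|\ell_{2n_0}(g_1)-\ell_{2n_0}(g)|$ produces $\theta_0 + 2\eta_0$, and plugging that into the $2[\cdot]-[\cdot]$ expression gives $2(\theta_0+2\eta_0)+\theta_0 = 3\theta_0+4\eta_0$, not the claimed $\theta_1=\theta_0+4\eta_0+Cn_0/n_1$. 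This is not a cosmetic loss: in the subsequent rate-of-convergence argument the iterates $\theta_k$ must stay bounded along the sequence of scales $\mathcal{N}_k$, and a coefficient of $3$ on $\theta_0$ would make $\theta_k$ blow up geometrically. The fix is to avoid the intermediate bound on $|\ell_{2n_0}(g_1)-\ell_{2n_0}(g)|$ entirely and regroup, as the paper does: writing $(1)$ in the form $\left|\ell_{n_1}(\cdot) - \ell_{n_0}(\cdot) + 2[\ell_{n_0}(\cdot)-\ell_{2n_0}(\cdot)]\right| < C n_0/n_1$ and subtracting the instances for $g_1$ and $g$ leaves
\begin{equation*}
|\ell_{n_1}(g_1)-\ell_{n_1}(g)| \leq |\ell_{n_0}(g_1)-\ell_{n_0}(g)| + 2|\ell_{n_0}(g_1)-\ell_{2n_0}(g_1)| + 2|\ell_{n_0}(g)-\ell_{2n_0}(g)| + C\tfrac{n_0}{n_1} < \theta_0 + 4\eta_0 + C\tfrac{n_0}{n_1},
\end{equation*}
which is the required $\theta_1$.
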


	\begin{proof}
		Throughout the proof $C$ will stand for some constant which isn't a priori always the same. We start the proof with some assumptions, in particular, making $r$ smaller if necessary, every $g_1$ with $d_\infty(g, g_1)<r$ satisfies large deviation estimates. We can also assume $\overbar{n_0}$ to be large enough so that $|\ell_n(g)-\ell(g)|<\varepsilon$ for $n \geq \overbar{n_0}$ which comes from the fact $\ell_n(g)$ converges to $\ell(g)$.
		
		With that said, let $g_1$ be in the conditions above. Assume $n_1=n\, n_0$ as otherwise we obtain an extra error of order $n_0/n_1$ which, by the previous lemma, is along the size of our control. Fix $x_0$ a basepoint in $X$ and define, for every $0\leq i \leq n-1$, the sequence of points
		\begin{equation*}
			x_i(\omega):= g_1^{(n_0)}(\omega)g_1^{(n_0)}(T^{n_0}\omega)\,...\,g_1^{(n_0)}(T^{(i-1)\, n_0}\omega)\,x_0,
		\end{equation*} 
		so that $x_{n} = g^{(n_1)}(\omega)x_0$ and for every $1 \leq i \leq n-1$,
		\begin{align*}
			d(x_i, x_{i-1}) & =  d(g_1^{(n_0)}(T^{(i-1)\, n_0}\omega)\, x_0\, , \,x_0) ,\\
			d(x_{i-1}, x_{i+1}) & = d(g_1^{(n_0)}(T^{(i-1)\, n_0}\omega) g_1^{(n_0)}(T^{(i)\, n_0}\omega)\, x_0 \, , \, x_0) = d(g_1^{(2n_0)}(T^{(i-1)\, n_0}\omega) x_0 \, , \, x_0).
		\end{align*} 
		
		At this point we are going to use the large deviation  estimates to verify the conditions of the avalanche principle are satisfied, with effect for every $m>\overbar{n_0}$ there exists a set $\mathcal{B}_{m}$ whose measure does not exceed $e^{-c\varepsilon^2m}$ such that for every $\omega \notin \mathcal{B}_{m}$ 
		\begin{equation*}
			-\varepsilon \leq  \frac{1}{m}d(g^{(m)}x_0, x_0)- \ell_m(g)  \leq \varepsilon
		\end{equation*}
		in particular, if $\omega \notin \mathcal{B}_{n_0}$ 
		\begin{align*}
			\frac{1}{n_0}d(x_1, x_0) & = 
			\frac{1}{n_0}d(g_1^{(n_0)}(\omega)x_0, x_0) \\
			& \geq \ell_{n_0}(g_1) - \varepsilon \\
			& > \ell_{n_0}(g) - \theta_0 - \varepsilon\\
			& \geq \ell(g) - \theta_0 - \varepsilon,
		\end{align*}
		whence,
		\begin{equation*}
			d(g_1^{(n_0)}(\omega)x_0, x_0) > n_0(\ell(g) - \theta_0 - \varepsilon) =: \rho_0 .
		\end{equation*}
		
		Through the same process we obtain for every $\omega\notin \mathcal{B}_{2n_0}$
		\begin{equation*}
			\frac{1}{2n_0}d(g_1^{(2n_0)}(\omega)x_0, x_0) \geq \ell^{(2n_0)}(g_1) - \varepsilon
		\end{equation*}
		as well as
		\begin{align*}
			\frac{1}{n_0}d(g_1^{(n_0)}(\omega)x_0, x_0) & \leq \ell_{n_0}(g_1) + \varepsilon \\
			\frac{1}{n_0}d(g_1^{(n_0)}(T^{n_0}\omega)x_0, x_0) & \leq \ell_{n_0}(g_1) + \varepsilon,
		\end{align*}
		for every $\omega\notin \mathcal{B}_{n_0}\cup T^{-n_0}\mathcal{B}_{n_0}$. Hence, for every $\omega \notin \mathcal{B}_{2n_0}\cup \mathcal{B}_{n_0}\cup T^{-n_0}\mathcal{B}_{n_0}  $
		\begin{align*}
			\left\langle x_0 \, , \, x_2 \right\rangle_{x_1} & = 
			\left\langle x_0 \, , \, g_1^{(2n_0)}(\omega)x_0 \right\rangle_{g_1^{(n_0)}(\omega)x_0} \\
			& =\frac{1}{2}\left( d(g_1^{(n_0)}(\omega)x_0, x_0) +  d(g_1^{(n_0)}(T^{n_0}\omega)x_0, x_0)  -d(g_1^{(2n_0)}(\omega)x_0, x_0)\right) \\
			&\leq n_0(\ell_{n_0} - \ell_{2n_0} + 2\varepsilon),
		\end{align*}
		in oher words, 
		\begin{equation*}
			\left\langle x_0 \, , \, x_2 \right\rangle_{x_1} <   n_0(\eta_0 + 2\varepsilon) =: \sigma_0.
		\end{equation*}
		
		Similar computations yield the same controls for every $1 \leq i \leq n-1$, under appropriate assumptions. Moreover, by hypothesis, $2\sigma_0 - \rho_0 = n_0( \eta_0+3\varepsilon + \theta_0 - \ell(g)) \leq -\varepsilon n_0$ so choosing $n_0$ large enough so that $-\varepsilon n_0<-2\delta$, the AP applies outside the set $\mathcal{B}_{n_0}^* = \cup_{i=0}^{n-1}T^{in_0}\mathcal{B}_{n_0}$ where we obtain the control
		\begin{equation*}
			\left| d(x_0, x_{n}) + \sum_{i=2}^{n-1}d(x_{i-1},x_i) - \sum_{i=1}^{n-1} d(x_{i-1},x_{i+1}) \right| \leq 2(n-1)\frac{1}{\log(b)} b^{2\sigma_0-\rho_0+2\delta},
		\end{equation*}
		which translates to
		\begin{equation*}
			\left| d(x_0, g^{(n_1)}(\omega)x_0) + \sum_{i=2}^{n-1}d(g_1^{(n_0)}(T^{(i-1)\, n_0}\omega)\, x_0\, , \,x_0) - \sum_{i=1}^{n-1} d(g_1^{(2n_0)}(T^{(i-1)\, n_0}\omega) x_0 \, , \, x_0) \right|  \lesssim n b^{-\varepsilon n_0}.
		\end{equation*}
		Dividing both sides by $n_1= n\, n_0$, one now obtains 
		\begin{align*}
			\bigg| \frac{1}{n_1}d(x_0, g^{(n_1)}(\omega)x_0) 
			& + \frac{1}{n}\sum_{i=2}^{n-1} \frac{1}{n_0}d(g_1^{(n_0)}(T^{(i-1)\, n_0}\omega)\, x_0\, , \,x_0) \\
			& - \frac{2}{n}\sum_{i=1}^{n-1} \frac{1}{2n_0} d(g_1^{(2n_0)}(T^{(i-1)\, n_0}\omega) x_0 \, , \, x_0) \bigg|  
			\lesssim b^{-\varepsilon n_0}.
		\end{align*}
    		Let $f(\omega)$ denote the bounded function on the left side. Notice that, for every $\omega \notin \mathcal{B}_{n_0}^\ast$, $|f(\omega)| \lesssim b^{-\varepsilon n_0}$, while in $\mathcal{B}_{n_0}^\ast$ the control $|f(\omega)| \leq C$ remains valid for some $C=C(g)$ since $g_1\in \mathcal{C}$. On the other hand,
		\begin{equation*}
			\int_\Omega f(\omega) d\mu (\omega) = \ell_{n_1}(g_1) +\frac{n-2}{n}\ell_{n_0}(g_1) - \frac{2(n-1)}{n}\ell_{2n_0}(g_1),
		\end{equation*}
		hence 
		\begin{align*}
			\bigg|\ell_{n_1}(g_1) & +\frac{n-2}{n}\ell_{n_0}(g_1) - \frac{2(n-1)}{n}\ell_{2n_0}(g_1) \bigg| \leq \int_\Omega |f(\omega)| d\mu(\omega) \\ 
			& \int_{\Omega \backslash \mathcal{B}_{n_0}^\ast}|f(\omega)| d\mu(\omega)  + \int_{ \mathcal{B}_{n_0}^\ast}|f(\omega)| d\mu(\omega) \\
			& \lesssim b^{-\varepsilon n_0} + C\mu(\mathcal{B}_{n_0}^\ast) \\
			& \lesssim b^{-\varepsilon n_0} + C b^{-c_1 n_0}\\
			& \lesssim b^{-c_1 n_0} < C\frac{n_0}{n_1} 
		\end{align*}
		
		Having 
		\begin{equation*}
			\left|\ell_{n_1}(g_1)  +\frac{n-2}{n}\ell_{n_0}(g_1) - \frac{2(n-1)}{n}\ell_{2n_0}(g_1)\right| < C\frac{n_0}{n_1}
		\end{equation*}
		one may write
		\begin{equation*}
			\left|\ell_{n_1}(g_1) + \ell_{n_0}(g_1) -  2\ell_{2n_0}(g_1) - \frac{2}{n}\left[\ell_{n_0}(g_1)-\ell_{2n_0}(g_1)\right]\right|<C\frac{n_0}{n_1}
		\end{equation*}
		so that (\ref{inductiveCont1}) holds:
		\begin{equation*}
			\left|\ell_{n_1}(g_1) + \ell_{n_0}(g_1) -  2\ell_{2n_0}(g_1) \right| < C\frac{n_0}{n_1}.
		\end{equation*}
		The same process may be used to obtain (\ref{inductiveCont1}) at times $2n_1$. Then by an immediate triangle inequality one obtains (\ref{inductiveCont2}).
		
		To prove (\ref{inductiveCont3}) start by rewriting (\ref{inductiveCont1}) as
		\begin{equation*}
			|\ell_{n_1}(g_1)-\ell_{n_0}(g_1) + 2[\ell_{n_0}(g_1)-\ell_{2n_0}(g_1)] | <C\frac{n_0}{n_1}.
		\end{equation*}
		So
		\begin{align*}
			|\ell_{n_1}(g_1)-\ell_{n_1}(g)| & \leq |\ell_{n_1}(g_1)-\ell_{n_0}(g_1) + 2[\ell_{n_0}(g)-\ell_{2n_0}(g)] | \\
			& \hspace{1cm} + |\ell_{n_1}(g)-\ell_{n_0}(g) + 2[\ell_{n_0}(g)-\ell_{2n_0}(g)] | \\
			& \hspace{1cm} + 2|\ell_{n_0}(g_1)-\ell_{2n_0}(g_1)| +2|\ell_{n_0}(g)-\ell_{2n_0}(g)| \\
			& \hspace{1cm} + |\ell_{n_0}(g)-\ell_{n_0}(g_1)| \\
			& < \theta_0 + 4\eta_0 + C\frac{n_0}{n_1} =:\theta_1.
		\end{align*}
	\end{proof}
	
	\subsection{Rate of convergence}
	
	In this section we shall use the inductive step to understand exactly how pushing  the controls though the natural numbers affects the convergence rate of the quantities at hand.
	
	\begin{lemma}
		Let $\{x_n\}$ be a sequence converging to $x$ such that for every $n\in \mathbb{N}$,
		\begin{equation*}
			|x_{n}-x_{2n}|<\frac{\log_b n }{n},
		\end{equation*}
		then, for every $n\in \mathbb{N}$
		\begin{equation*}
			|x_n - x| \lesssim \frac{\log_b n }{n}.
		\end{equation*}
	\end{lemma}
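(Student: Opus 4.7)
The plan is to use a standard doubling/telescoping argument. For any fixed $n$, since $x_m \to x$ the telescoping sum
\begin{equation*}
x - x_n = \sum_{k=0}^{\infty}\left( x_{2^{k+1}n} - x_{2^{k}n}\right)
\end{equation*}
is absolutely convergent, because the hypothesis gives $|x_{2^{k+1}n} - x_{2^{k}n}| < \log_b(2^{k}n)/(2^{k}n)$, and these terms decay geometrically in $k$. Taking absolute values and applying the hypothesis termwise yields
\begin{equation*}
|x_n - x| \leq \sum_{k=0}^{\infty} \frac{\log_b(2^{k}n)}{2^{k}n} = \frac{1}{n}\sum_{k=0}^{\infty}\frac{k \log_b 2 + \log_b n}{2^{k}}.
\end{equation*}

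Next I would split this into two convergent series. Since $\sum_{k\geq 0} 2^{-k} = 2$ and $\sum_{k\geq 0} k\, 2^{-k} = 2$, we get
\begin{equation*}
|x_n - x| \leq \frac{2\log_b 2}{n} + \frac{2\log_b n}{n}.
\end{equation*}
For $n$ large enough (say $n \geq 2$, where $\log_b n \geq \log_b 2$), the first summand is dominated by the second, so $|x_n - x| \lesssim \log_b(n)/n$, and the few small values of $n$ can be absorbed into the implicit constant. This gives the claim.

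There is no real obstacle here: the argument is purely a telescoping estimate, and the only thing to be careful about is that the series $\sum k/2^{k}$ converges with an explicit (small) value, which is what allows the logarithmic factor on the right-hand side to match the original hypothesis up to a universal constant.
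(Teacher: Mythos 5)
Your proof is correct and follows essentially the same telescoping argument as the paper; you simply carry out the series computation ($\sum_k 2^{-k}=2$, $\sum_k k\,2^{-k}=2$) more explicitly, whereas the paper just asserts the resulting sum is of order $\log_b(n)/n$.
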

	
	\begin{proof}
		Let $n\in \mathbb{N}$, then we can use a telescopic sum to write
		\begin{align*}
			|x_n-x| & = \left| \sum_{i=0}^{\infty} x_{2^in}-x_{2^{i+1}n} \right| \leq  \sum_{i=0}^{\infty} \left| x_{2^in}-x_{2^{i+1}n} \right|\\
			& \leq  \sum_{i=0}^{\infty}  \frac{\log_b(2^in)}{2^in} \lesssim \frac{\log_b n }{n},
		\end{align*}
		as the sum of the series is of order $\frac{\log_b n}{n}$. 
	\end{proof}
	
	Using the inductive step we can now obtain the rate of  convergence associated with the functions $\ell_n$. These however will be too slow, hence we also look at $-\ell_{n} + 2\ell_{2n}$.
	
	\begin{proposition}
		\label{convergeRate}
		Let $g\in \mathcal{C}$. There are constants $r_1>0$, $\overbar{n_0}\in \mathbb{N}$, $c_2>0$, $K<\infty$ all depending on $g$ such that the following hold
		\begin{align*}
			|\ell(g_1)-\ell_{n}(g_1)| & <K\frac{\log_b n }{n}\\
			|\ell(g_1) + \ell_{n}(g_1) - 2\ell_{2n}(g_1)|&< b^{-c_2n},
		\end{align*}
		for every $n>\overbar{n_0}$ and $g_1\in S^\infty(\Omega, G)$ with $d_\infty(g, g_1)<r_1$. 
	\end{proposition}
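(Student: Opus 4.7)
The plan is to apply the inductive step recursively along a super-exponentially growing sequence of scales, extract the claimed rates by telescoping, and then extend to all scales using the earlier comparison lemma bounding $\ell_{n_1}$ in terms of $\ell_{n\cdot n_0}$ up to $O(n_0/n_1)$.

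First I would pick $\overbar{n_0}$ so large that, since $\ell_n(g) \to \ell(g)$, the preconditions of the inductive step hold at $g$ with small constants $\eta_0, \theta_0$ leaving enough slack in the budget $\theta_0+2\eta_0 < \ell(g) - 4\varepsilon$ to absorb all later perturbations. Then I would shrink $r_1 > 0$ (depending on $\overbar{n_0}$ via the factor $b^{C_1 \overbar{n_0}}$ of Proposition \ref{FiniteScaleContinuity}, but crucially independent of the target scale $n$) so that every $g_1$ with $d_\infty(g_1,g) < r_1$ also satisfies the inductive hypotheses at scale $\overbar{n_0}$.

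Next, setting $n_{k+1}$ to be the nearest integer to $e^{c_1 n_k}$, I would iterate the inductive step along this sequence: its output at $n_k$ provides exactly the hypotheses at $n_{k+1}$, with updated constants $\eta_{k+1} = C n_k/n_{k+1}$ and $\theta_{k+1} = \theta_k + 4\eta_k + \eta_{k+1}$. Because $n_k$ grows iterated-exponentially, $\sum_k \eta_k$ is summable with a super-exponential tail, so $\theta_k$ stays bounded and the budget precondition continues to hold indefinitely. Combining (\ref{inductiveCont1}) with $\ell_{n_k}(g_1) - \ell_{2n_k}(g_1) < \eta_k$ yields $|\ell_{n_{k+1}}(g_1) - \ell_{n_k}(g_1)| \lesssim \eta_k$; telescoping the tail (using $\ell_{n_j}(g_1) \to \ell(g_1)$ from Kingman's theorem) gives $|\ell(g_1) - \ell_{n_k}(g_1)| \lesssim \eta_k \sim \log_b n_k / n_k$. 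Feeding this back into (\ref{inductiveCont1}) produces $|\ell(g_1) + \ell_{n_k}(g_1) - 2\ell_{2 n_k}(g_1)| \lesssim n_k/n_{k+1} \leq b^{-c_2 n_k}$, so both inequalities hold at the sparse iterated scales.

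Finally, for arbitrary $n > \overbar{n_0}$, I would locate $k$ with $n_k \leq n < n_{k+1}$ and use the comparison lemma (relating $\ell_{n_1}$ to $\ell_{n\cdot n_0}$) together with Fekete's subadditivity to bridge from the iterated scales to $n$. I expect this last step to be the main obstacle: the gap $[n_k, n_{k+1}]$ is enormous, and the naive bridging with $n_0 = n_k$ leaves a slack $O(n_k/n)$ that is too large when $n$ is close to $n_k$. A careful optimization of the auxiliary scale — perhaps applying the inductive step at additional intermediate starting points, whose hypotheses are supplied by the iteration itself — should be required to recover the uniform $\log_b n/n$ and $b^{-c_2 n}$ rates over the full range.
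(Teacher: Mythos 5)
Your iteration-and-telescoping skeleton matches the paper's strategy, and you correctly sense that the real difficulty is reaching \emph{all} scales $n$ rather than just the sparse iterated scales $n_k$. But you leave that step as an unresolved ``main obstacle,'' and the first repair you float --- the comparison lemma plus Fekete subadditivity --- genuinely does not work: bridging a scale $n$ just above $n_k$ via the comparison lemma with $n_0 = n_k$ incurs an error of order $n_k/n = O(1)$, which destroys the rate, and you acknowledge as much.

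The paper's resolution is that \emph{no bridging is needed at all}. Instead of iterating from the single point $\overbar{n_0}$, the proof runs the inductive step simultaneously from every starting point in the closed interval $\mathcal{N}_0 = [\,\overbar{n_0},\, \ceil{b^{c_1\overbar{n_0}}}\,]$, verifying the hypotheses for each such $n_0$ directly from finite-scale continuity (this is why the radius $r_1$ must be shrunk enough to serve the whole interval, not merely the left endpoint). Since the right endpoint of $\mathcal{N}_0$ is $\ceil{b^{c_1 n_0^-}}$, the image interval $\mathcal{N}_1 = [\,\floor{b^{c_1 n_0^-}},\, \ceil{b^{c_1 n_0^+}}\,]$ begins where $\mathcal{N}_0$ ends, and inductively the intervals $\mathcal{N}_k$ are adjacent and their union covers $[\overbar{n_0}, \infty)$. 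Consequently every $n > \overbar{n_0}$ lies in some $\mathcal{N}_{k+1}$ and is \emph{itself} the output scale $\ceil{b^{c_1 n_k}}$ of a genuine iteration chain originating in $\mathcal{N}_0$, so the inductive step hands you both inequalities directly at scale $n$; the comparison lemma is used only internally to the inductive step, to absorb the remainder when $n$ is not an exact multiple of $n_k$. Your closing sentence gestures at this ``family of starting points'' construction but does not carry it out, and that construction is precisely the missing content.
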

	
	\begin{proof}
		Let us use the constants $\varepsilon, \, c_1, \, C, \, r$ and $\overbar{n_0}$ given in the inductive step.  Consider the quantities $n_0^- = \overbar{n_0}$,  $n_0^+ = \ceil{b^{c_1\overbar{n_0}}} $ and set $\mathcal{N}_0 := [n_0^-, n_0^+]$. We shall also define $r_1 = \min \{r, b^{-3C_1\overbar{n_0}} \}$. Then, by the finite scale continuity, for every $n_0\in \mathcal{N}_0$, we have
		\begin{equation*}
			|\ell_{2n_0}(g_1)-\ell_{2n_0}(g)|<\frac{ b^{2C_1n_0}}{\log b}d_\infty(g_1,g_2) \leq b^{-C_1\overbar{n_0}} \leq \varepsilon,
		\end{equation*}
		choosing $\overbar{n_0}$ large enough for the effect. Likewise 
		\begin{equation*}
			|\ell_{n_0}(g_1)-\ell_{n_0}(g)|< \varepsilon =: \theta_0,
		\end{equation*}
		Moreover 
		\begin{equation*}
			|\ell_{2n_0}(g)-\ell_{n_0}(g)|< |\ell_{2n_0}(g)-\ell(g)| + |\ell(g)-\ell_ {n_0}(g)| < 2\varepsilon ,
		\end{equation*}
		so that
		\begin{equation*}
			|\ell_{2n_0}(g)-\ell_{n_0}(g)|< 2\varepsilon =: \eta_0,
		\end{equation*}
		and we have
		\begin{equation*}
			\theta_0 + 2 \eta_0 = 5\varepsilon< \ell(g)-6\varepsilon.
		\end{equation*}
		
		Using the inductive process we now have $n_1^- = \floor{b^{c_1n_0^-}}$, $n_1^+ = \ceil{b^{c_1n_0^+}}$ and define $\mathcal{N}_1 = [n_1^-, n_1^+]$. If $n_1\in \mathcal{N}_1$ then $n_0 \lesssim \log_b(n_1)$. Now,
		\begin{equation*}
			\left|\ell_{n_1}(g_1) + \ell_{n_0}(g_1) -  2\ell_{2n_0}(g_1) \right| < C\frac{n_0}{n_1} < K\frac{\log_b n_1}{n_1},
		\end{equation*}
		for some constant $K$. Moreover
		\begin{align*}
			\ell_{n_1}(g_1) - \ell_{2n_1}(g_1) & < \eta_1 \\
			|\ell_{n_1}(g_1) - \ell_{n_1}(g)|& < \theta_1
		\end{align*}
		where
		\begin{align*}
			\theta_1 & = \theta_0 + 4\eta_0 + C\frac{n_0}{n_1} < 13 \varepsilon + K\frac{\log_b n_1 }{n_1}, \\
			\eta_1 & = C\frac{n_0}{n_1} < K\frac{\log_b n_1}{n_1}. 
		\end{align*}
		
		Furthermore,
		\begin{equation*}
			\theta_1 + 2\eta_1 \leq 13 \varepsilon + 3K\frac{\log_b n_1 }{n_1} < 16\varepsilon < \ell(g) -6\varepsilon. 
		\end{equation*}
		Hence we can repeat the process, let $n_2^-=\floor{b^{c_1n_1^-}}, n_2^+=\ceil{b^{c_1n_1^+}}$, and define 
		$\mathcal{N}_2=[n_2^-, n_2^+]$, then, if $n_2 \in \mathcal{N}_2$, there exists $n_1 \in \mathcal{N}_1$ such that  $n_1 \lesssim \log_b(n_2)$
		\begin{equation*}
			\left|\ell_{n_2}(g_1) + \ell_{n_1}(g_1) -  \ell_{2n_1}(g_1) \right| < C\frac{n_1}{n_2} < K\frac{\log_b n_2 }{n_2}.
		\end{equation*}
		Moreover
		\begin{align*}
			\ell_{n_2}(g_1) - \ell_{2n_2}(g_1) & < \eta_2 \\
			|\ell_{n_2}(g_1) - \ell_{n_2}(g)|& < \theta_2
		\end{align*}
		where
		\begin{align*}
			\theta_2 & = \theta_1 + 4\eta_1 + C\frac{n_1}{n_2} < 13 \varepsilon + 5 K\frac{\log_b n_1 }{n_1} + K\frac{\log_b n_2 }{n_2}, \\
			\eta_2 &  = C\frac{n_1}{n_2}<K\frac{\log_b n_2 }{n_2}
		\end{align*}
		
		Inductively repeating the process we obtain intervals $\mathcal{N}_k$ whose union cover all natural numbers greater than $n_0$. Hence given $n>n_0$, there exists $k\geq0$ such that $n\in \mathcal{N}_{k+1}$, so there is also $n_k\in \mathcal{N}_k$ so that
		\begin{equation*}
			n= n_{k+1} = \ceil{b^{c_1n_k}}.
		\end{equation*}
		Moreover
		\begin{equation*}
			\ell_{n_{k+1}}(g_1)-\ell_{2n_{k+1}}(g_1) < \eta_{k+1} < K\frac{\log_b n_{k+1} }{n_{k+1}}
		\end{equation*}
		and 
		\begin{align*}
			|\ell_{n_{k+1}}(g_1)-\ell_{n_{k+1}}(g_1)| &< \theta_{k+1} \\
			& < \theta_k + 4 \eta_{k+1} + C\frac{n_k}{n_{k+1}} \\
			& < 13 \varepsilon + 5K\sum_{i=1}^{k}K\frac{\log_b n_i }{n_i} + K\frac{\log_b n_{k+1}}{n_{k+1}},
		\end{align*}
		however, since $n_k$ increase super-exponentially, the series $\sum_{i>0} \frac{\log_b n_i }{n_i}$ is convergent with sum of order $\frac{\log_b n_1 }{n_1}$.

		With that, for every $n\geq n_0$ we obtain
		\begin{equation*}
			\ell_n(g_1)-\ell_{2n}(g_1) < K \frac{\log_b n }{n} 
		\end{equation*}
		whence
		\begin{equation*}
			|\ell_n(g_1)-\ell(g_1)| < K \frac{\log_b n }{n}.
		\end{equation*}
		
		Now, 
		\begin{align*}
			\left|\ell_{n_{k+1}}(g_1) + \ell_{n_k}(g_1) -  2\ell_{2n_k}(g_1) \right| & <  K\frac{\log_b n_{k+1} }{n_{k+1}} \\
			& \leq K c_1n_k b^{-c_1 n_k} <  b^{-\frac{c_1}{2}n}
		\end{align*}
		so
		\begin{equation*}
			\left|\ell(g_1) + \ell_{n_k}(g_1) -  2\ell_{2n_k}(g_1) \right|< 2b^{-\frac{c_1}{2}n_k}< b^{-\frac{c_1}{3}n_k}
		\end{equation*}
		hence the result follows for $n>n_0$.
	\end{proof}

	\subsection{Proof of items 1) and 2) in Theorem \ref{ContTheorem}}
	
	Recall from Lemma \ref{USC}, since $d(gx_0, x_0)\geq 0$ we  already have continuity for cocycles in $\mathcal{C}$ with zero drift, so it remains to obtain the part regarding $\mathcal{C}_+$, so point 1) follows from point 2), which we now prove.
	
	\begin{proof}
		Consider $\overbar{n_1}\in \mathbb{N}$, $r_1>0$, $r>0$, $c_2$ as in Proposition \ref{convergeRate} and $C_1$ and is Proposition \ref{FiniteScaleContinuity}. Let $g\in \mathcal{C}_+$ with $\ell(g)>0$ and take the function $f_n:\mathcal{C} \to \mathbb{R}$
		\begin{equation*}
			f_n := -\ell_n + 2\ell_{2n}
		\end{equation*}
		clearly $f_n(g) \to \ell(g)$, moreover an exponential rate of convergence holds for every $n \geq \overbar{n_0}$,
		\begin{equation*}
			|\ell(g_1)-f_n(g_1)| = |\ell(g_1)+\ell_n(g_1)-2\ell_{2n}(g_1)| \leq b^{-c_2n}.
		\end{equation*}
		Consider now $d_\infty(g_1, g_2)<\log(b)b^{-2( C_1 + c_2)\overbar{n_1}}$, and pick $n\geq \overbar{n_1}$ such that
		\begin{equation*}
			b^{-4( C_1 + c_2)n} < d_\infty(g_1\, , \, g_2) < b^{-2( C_1 + c_2)n}.
		\end{equation*}
		Then for $m$ equal to either $n$ or $2n$ one has
		\begin{equation*}
			|\ell_m(g_1)-\ell_m(g_2)|\leq \frac{b^{ 2C_1n}}{\log b } d_\infty(g_1, g_2) < b^{-2c_2n}
		\end{equation*}
		thus
		\begin{align*}
			|f_n(g_1)-f_n(g_2)| &\leq |\ell_{n}(g_1)-\ell_{n}(g_2)| + 2|\ell_{2n}(g_1)-\ell_{2n}(g_2)| \\
			& \leq 3 b^{-2c_2n} \leq b^{-c_2n}.
		\end{align*}
		Finally one has
		\begin{align*}
			|\ell(g_1) - \ell(g_2)| & \leq |\ell(g_1)-f_n(g_1)| + |f_n(g_1) - f_n(g_2)|  + |\ell(g_2)-f_n(g_2)| \\
			& \leq 3b^{-c_2 n} \\
			& \leq 3 d_\infty(g_1\, , \, g_2)^\alpha,
		\end{align*}
		where $\alpha = \frac{c_2}{4( C_1 + c_2)}$.
	\end{proof}
	
	\subsection{Large deviations remark}
	
	Given $g\in S_{+}^\infty(\Omega, G)$, by the rate  of convergence, there exists a neighbourhood $V$ of $g$ in  $S_{+}(\Omega, G)$ and $\overbar{n_1}\in \mathbb{N}$ such that the finite scale drifts $\ell_n$ converge uniformly to $\ell$ on $V$. Hence, for every $\varepsilon>0$ there exists $\overbar{n}(\varepsilon)$ such that for every $n\geq \overbar{n}(\varepsilon)$ and $g_1 \in V$,
	\begin{align*}
		|\ell(g_1)-\ell(g)| & <\varepsilon \\
		|\ell_n(g_1)-\ell(g_1)| & <\varepsilon.
	\end{align*}
	Therefore large deviation estimates can be restated in a stronger manner
	
	\begin{definition}[Uniform large deviation estimates]
		Given $g\in S_{+}^\infty(\Omega, G)$ There exists a neighbourhood $V\subset S^\infty(\Omega, G)$ of $g$ and a constant $c>0$ such that for every $\varepsilon>0$, there exists $\overbar{n}_0$ such that 
		\begin{equation*}
			\mu\left\{ \omega \in \Omega \, : \, \left| \frac{1}{n}d(g_1^{(n)}(\omega)x_0\, , \, x_0) -  \ell(g_1) \right|> \varepsilon \right\} < b^{-cn\varepsilon^2},
		\end{equation*}
		for every $g_1 \in V$ and $n \geq \overbar{n}_0$.
	\end{definition}

	\section{Continuity of the hitting point}
	
	Proving the continuity of the tracking point is similar to proving the continuity of the drift although some of the hard work has already been done. 
	
	Let $g\in S^\infty(\Omega, G)$, we start by considering the positional maps
	\begin{align*}
		p_g^{(n)} : \Omega &\to X \\
		\omega & \mapsto g^{(n)}(\omega)x_0
	\end{align*}
	and consider their limit in $\bord X$
	\begin{equation*}
		p_g^{(\infty)}(\omega) := \lim_{n\to \infty} p_g^{(n)}(\omega),
	\end{equation*}
	whose existence we shall discuss later in section 4.2.  Notice that if $g\in S_+^\infty(\Omega, G)$, then for almost every $\omega \in \Omega$
	\begin{equation*}
		\xi_g(\omega) = p_g^{(\infty)}(\omega).
	\end{equation*} 
	
	Given $g_1, g_2\in S^\infty(\Omega, G)$ we define the quantity
	\begin{equation}
		d_1(p_{g_1}, p_{g_2}) = \int_\Omega \overbar{D}_b(g_1(\omega)x_0, g_2(\omega)x_0) d\mu(\omega).
	\end{equation}
	The route to prove continuity of $\xi$ is the same as the one done before for the drift $\ell$. We check the finite scale continuity with respect to $d_1$ first and then we compute the rate of convergence. Since the space is strongly hyperbolic we then obtain 
	\begin{equation*}
		d_1(\xi_{g_1}(\omega), \xi_{g_2}(\omega)) = \lim_{n\to \infty } d_1(p_{g_1}^{(n)}, p_{g_2}^{(n)}).
	\end{equation*}

	\subsection{Finite scale continuity}

	\begin{proposition}
		Let $g\in S_+^\infty(\Omega, G)$, there exist $c=c(g)>0$,  $r>0$, $\varepsilon>0$ and $C_2=C_2(g,\varepsilon)<\infty$ such that for every $g_1, g_2 \in S^\infty(\Omega, G)$ with $d_\infty(g, g_i)<r$ if $n\geq \overbar{n}(\varepsilon)$ and $d_\infty(g_1, g_2)< b^{-C_2n}$, then for every $\omega$ outside a set of measure $< b^{-nc\varepsilon
			^2}$
		\begin{equation*}
			\overbar{D}_b(p_{g_1}^{(n)}(\omega) \, , \, p_{g_2}^{(n)}(\omega) ) \leq b^{-nc\varepsilon^2}.
		\end{equation*}
		Hence
		\begin{equation*}
			d_1(p_{g_1}^{(n)} \, , \, p_{g_2}^{(n)} ) \lesssim b^{-nc\varepsilon^2}.
		\end{equation*}
	\end{proposition}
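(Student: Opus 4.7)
The strategy is to combine the finite scale distance bound from Lemma~\ref{normIneq} with the uniform large deviation estimates, exploiting the algebraic identity, valid for $y_1, y_2 \in X$,
\begin{equation*}
    \overbar{D}_b(y_1, y_2) = b^{-\langle y_1, y_2\rangle_{x_0}} = b^{\frac{1}{2}d(y_1, y_2) - \frac{1}{2}d(x_0, y_1) - \frac{1}{2}d(x_0, y_2)}.
\end{equation*}
Applied to $y_i = g_i^{(n)}(\omega) x_0$, the two negative terms in the exponent will provide a contraction of order $b^{-n\ell(g)}$ on the set where large deviations hold, while the positive term will be kept exponentially negligible thanks to the hypothesis $d_\infty(g_1, g_2) < b^{-C_2 n}$ with $C_2$ chosen sufficiently large. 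The proposition then reduces to a bookkeeping of constants.

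Concretely, fix $\varepsilon = \ell(g)/100$ and take $c$, $\overbar{n}(\varepsilon)$, $r$ from the uniform LDE, shrinking $r$ if necessary so that $|\ell(g_i)-\ell(g)|<\varepsilon$ whenever $d_\infty(g,g_i)<r$ (using continuity of $\ell$, already established in items~1) and 2) of Theorem~\ref{ContTheorem}). Let $C = C(g)$ be the constant of Lemma~\ref{normIneq} and set $C_2 := 2\log_b C + \ell(g) + 1$. For $n \geq \overbar{n}(\varepsilon)$, define the exceptional set
\begin{equation*}
    \mathcal{B}_n := \left\{ \omega \in \Omega \,:\, \max_{i\in\{1,2\}} \left|\tfrac{1}{n} d(g_i^{(n)}(\omega)x_0, x_0) - \ell_n(g_i)\right| > \varepsilon \right\},
\end{equation*}
of measure at most $2 b^{-nc\varepsilon^2}$. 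Outside $\mathcal{B}_n$ one has $d(x_0, g_i^{(n)}(\omega)x_0) \geq n(\ell(g)-3\varepsilon)$, while the chain of inequalities in the proof of Proposition~\ref{FiniteScaleContinuity} gives, for every $\omega$,
\begin{equation*}
    d(g_1^{(n)}(\omega)x_0, g_2^{(n)}(\omega)x_0) \leq \tfrac{nC^{2n}}{\log b}\, d_\infty(g_1, g_2) \leq \tfrac{n}{\log b}\, b^{-(\ell(g)+1)n},
\end{equation*}
which is exponentially smaller than $n(\ell(g)-3\varepsilon)$. Substituting into the identity yields, for $n$ large enough and after replacing $c$ by $\min(c, (\ell(g)-4\varepsilon)/\varepsilon^2)$,
\begin{equation*}
    \overbar{D}_b(p_{g_1}^{(n)}(\omega), p_{g_2}^{(n)}(\omega)) \leq b^{-n(\ell(g)-4\varepsilon)} \leq b^{-nc\varepsilon^2}
\end{equation*}
on $\Omega \setminus \mathcal{B}_n$, and the integral bound follows by using the trivial estimate $\overbar{D}_b \leq 1$ on $\mathcal{B}_n$.

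The main obstacle is the constant bookkeeping needed to ensure $C_2$ is large enough that the positive contribution $\tfrac{1}{2}d(g_1^{(n)}x_0, g_2^{(n)}x_0)$ is dominated by the contraction $-\tfrac{1}{2}(d_1+d_2)$, without circular dependencies between $n$, $\varepsilon$ and $C_2$. Once this is arranged, the proposition follows directly from the two already available ingredients, with no need for a telescoping or avalanche-type argument at this scale.
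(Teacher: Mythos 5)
Your proof is correct and follows essentially the same route as the paper: you use the identity $\overbar{D}_b(y_1,y_2)=b^{\frac12 d(y_1,y_2)-\frac12 d(x_0,y_1)-\frac12 d(x_0,y_2)}$, control the two negative terms by the uniform LDE (giving a contraction $\sim b^{-n(\ell(g)-O(\varepsilon))}$), and keep the positive term negligible by combining the finite-scale estimate $d(g_1^{(n)}x_0,g_2^{(n)}x_0)\lesssim nC^{2n}d_\infty(g_1,g_2)$ with the hypothesis $d_\infty(g_1,g_2)<b^{-C_2n}$ for $C_2$ large enough. The paper packages the constants differently — it brackets $\ell$ on the neighbourhood by $\gamma_1<\ell(g)<\gamma_2$, picks $\varepsilon$ so that $c\varepsilon^2\le\gamma_1$, and requires only $C_2>\gamma_2$ — whereas you fix $\varepsilon=\ell(g)/100$, make $C_2$ explicitly absorb the $C^{2n}$ factor (arguably more careful than the paper at this point, whose stated $C_2>\gamma_2$ leaves the $nC^{2n}$ prefactor unaccounted for), and shrink $c$ at the end so the final exponent $\ell(g)-4\varepsilon$ dominates $c\varepsilon^2$. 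These are cosmetic reorganizations of the same argument; both rely on the identical two ingredients and neither needs the avalanche principle at this scale, as you note.
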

	
	\begin{proof}
		Consider $c$ to be the large deviation parameter. By the continuity of $\ell(g)$, take $0<\gamma_1< \ell(g) < \gamma_2$ close enough so that
		\begin{align*}
			\gamma_1 & < \inf\{ \ell(g_*) \, : \, g_* \in S^\infty(\Omega, G) \textrm{ and } d_\infty(g, g_*)<r \} \\
			& \leq \sup\{ \ell(g_*) \, : \, g_* \in S^\infty(\Omega, G) \textrm{ and } d_\infty(g, g_*)<r \}  < \gamma_2,
		\end{align*}
		as well as $\varepsilon>0$ so that 
		\begin{equation*}
			c\varepsilon^2 \leq \gamma_1 \leq \ell(g_*) - \varepsilon \leq \ell(g_*) + \varepsilon \leq \gamma_2
		\end{equation*}
		for every $g_* \in S^\infty(\Omega, G)$ and $d_\infty(g, g_*)<r$.
		
		For every $n\geq \overbar{n}(\varepsilon)$, the deviation sets 
		\begin{equation*}
			\mathcal{B}_n(g_*) = \left\{\omega \in \Omega \, : \, \left| \frac{1}{n}d(g_*^{(n)}(\omega)x_0\, , \, x_0) - \ell(g_*) \right| > \varepsilon \right\}
		\end{equation*}
		have their measure bounded by $\lesssim b^{-nc\varepsilon^2}$.
		
		Let $\omega\notin \mathcal{B}_n(g_1)\cup \mathcal{B}_n(g_2)$, then for $i=1,2$
		\begin{align*}
			d(g_i^{(n)}(\omega)x_0 , x_0) &< n(\ell(g_i) + \varepsilon) < n\gamma_2, \\
			d(g_i^{(n)}(\omega)x_0 , x_0)& > n(\ell(g_i) - \varepsilon) > n\gamma_1.
		\end{align*}
		
		At this point, notice as in the proof of  Proposition \ref{FiniteScaleContinuity}
		\begin{align*}
			d(g_1^{(n)}(\omega)x_0\, , \, g_2^{(n)}(\omega)x_0) & \leq \frac{1}{\log b}b^{d(g_1^{(n)}(\omega)x_0\, , \, g_2^{(n)}(\omega)x_0)/2} \\
			& \leq \frac{b^{n\gamma_2}}{\log b}d_\infty(g_1, g_2).
		\end{align*}
		
		Finally, choosing $C_2 > \gamma_2$, provided $d_\infty(g_1,g_2)< b^{C_2n}$,
		\begin{align*}
			\overbar{D}_b(p_{g_1}^{(n)}(\omega) \, , \, p_{g_2}^{(n)}(\omega) ) & \leq b^{-\langle g_1^{(n)}(\omega)x_0\, , \, g_2^{(n)}(\omega)x_0 \rangle_{x_0}} \\
			& \leq b^{\frac{1}{2}\left[d(g_1^{(n)}(\omega)x_0, g_2^{(n)}(\omega)x_0) - d(g_1^{(n)}(\omega)x_0, x_0) - d(g_2^{(n)}(\omega)x_0, x_0)\right]}\\
			& \leq b^{-n\gamma_1}  \leq b^{-nc\varepsilon^2}.
		\end{align*}
	\end{proof}
	
	\subsection{Rate of Convergence}
	
	\begin{proposition}
		Let $g\in S_+^\infty(\Omega, G)$, then $(p_{g_1}^{(n)})$ is a Cauchy sequence, in particular $p_{g_1}^{(\infty)}$ is well defined. Moreover, there are constants $r>0$, $\varepsilon>0$ and $\overbar{n_0}\in \mathbb{N}$, all depending on $g$, such that
		\begin{equation*}
			d_1(p_{g_1}^{(n)}\, , \, p_{g_1}^{(\infty)}) \lesssim b^{-nc\varepsilon^2}
		\end{equation*}
		for all  $n \geq \overbar{n_0}$ and for all $g_1\in S^\infty(\Omega, G)$ with $d_\infty(g_1, g)<r$
		
	\end{proposition}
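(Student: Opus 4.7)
The plan is to invoke Theorem~\ref{hmet} to establish the existence of $p_{g_1}^{(\infty)}$ almost surely, then exploit the triangle inequality for $\overbar{D}_b$ on $\bord X$ (guaranteed by strong hyperbolicity) to bound $\overbar{D}_b(p_{g_1}^{(n)}(\omega),p_{g_1}^{(\infty)}(\omega))$ by a dyadic telescoping sum, and finally to control each term via the uniform large deviation estimate on the Gromov product.

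First I would shrink $r$ so that, by the local Hölder continuity proved in items 1) and 2) of Theorem~\ref{ContTheorem}, every $g_1$ with $d_\infty(g_1,g)<r$ lies in $S_+^\infty(\Omega,G)$ with $\ell(g_1)\geq \ell(g)/2$, and so that the strengthened uniform LDE from the large deviations remark in Section~3 applies to every such $g_1$ with a common parameter $c$. Theorem~\ref{hmet} and the remark following it then give that $p_{g_1}^{(\infty)}(\omega)=\xi_{g_1}(\omega)\in\partial X$ exists for a.e.\ $\omega$.

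Next, for such $\omega$, I would use the triangle inequality for $\overbar{D}_b$ on $\bord X$ together with continuity of $\overbar{D}_b$ to obtain the dyadic bound
\begin{equation*}
\overbar{D}_b\bigl(p_{g_1}^{(n)}(\omega),\,p_{g_1}^{(\infty)}(\omega)\bigr)\leq \sum_{j\geq 0}\overbar{D}_b\bigl(p_{g_1}^{(2^j n)}(\omega),\,p_{g_1}^{(2^{j+1} n)}(\omega)\bigr),
\end{equation*}
each summand being $b^{-\langle g_1^{(2^j n)}(\omega)x_0,\,g_1^{(2^{j+1} n)}(\omega)x_0\rangle_{x_0}}$. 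Expanding the Gromov product and applying the cocycle identity $g_1^{(2^{j+1}n)}(\omega)=g_1^{(2^j n)}(\omega)\,g_1^{(2^j n)}(T^{2^j n}\omega)$ together with the isometric action of $g_1^{(2^j n)}(\omega)$ reduces it to three displacements: at scales $2^j n$ and $2^{j+1}n$ of $g_1$ at $\omega$, and at scale $2^j n$ at the shifted argument $T^{2^j n}\omega$. Picking $\varepsilon>0$ small enough that $\ell(g_1)-2\varepsilon$ remains bounded below uniformly on the neighbourhood, the uniform LDE applied to each of these three displacements (invoking $T$-invariance of $\mu$ for the shifted one) shows that outside a bad set $\mathcal{B}_j$ of measure $\lesssim b^{-2^j n c\varepsilon^2}$ we have $\langle g_1^{(2^j n)}(\omega)x_0,\,g_1^{(2^{j+1} n)}(\omega)x_0\rangle_{x_0}\geq 2^j n(\ell(g_1)-2\varepsilon)$, hence $\overbar{D}_b(p_{g_1}^{(2^j n)}(\omega),p_{g_1}^{(2^{j+1} n)}(\omega))\leq b^{-2^j n(\ell(g_1)-2\varepsilon)}$.

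To conclude I would set $\mathcal{B}:=\bigcup_{j\geq 0}\mathcal{B}_j$, so that $\mu(\mathcal{B})\lesssim b^{-n c\varepsilon^2}$ by a geometric-sum union bound. On $\Omega\setminus\mathcal{B}$ the telescoping sum is dominated by $\lesssim b^{-n(\ell(g_1)-2\varepsilon)}$, while on $\mathcal{B}$ one uses $\overbar{D}_b\leq 1$. Integrating and, if necessary, further shrinking $\varepsilon$ so that $c\varepsilon^2<\ell(g_1)-2\varepsilon$ uniformly on the neighbourhood yields $d_1(p_{g_1}^{(n)},p_{g_1}^{(\infty)})\lesssim b^{-n c\varepsilon^2}$. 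The Cauchy statement follows from the same dyadic decomposition truncated between $n_1\leq n_2$, giving $\overbar{D}_b(p_{g_1}^{(n_1)}(\omega),p_{g_1}^{(n_2)}(\omega))\lesssim b^{-n_1(\ell(g_1)-2\varepsilon)}$ outside a bad set of measure $\lesssim b^{-n_1 c\varepsilon^2}$. I expect the main technical nuisance to be the coupled choice of $\varepsilon$: it must simultaneously keep $\ell(g_1)-2\varepsilon$ positive uniformly, absorb the LDE error at all three displacements, and yet leave $c\varepsilon^2$ as the effective decay rate---routine parameter bookkeeping mirroring the end of the drift-continuity proof.
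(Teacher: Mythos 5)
Your argument is correct and follows essentially the same route as the paper: establish that outside a deviation bad set of measure $\lesssim b^{-nc\varepsilon^2}$ the orbit $(p_{g_1}^{(m)}(\omega))_{m\geq n}$ is Cauchy in $\overbar{D}_b$ with tail decaying exponentially in $n$, then split the integral defining $d_1$ over the good and bad sets, using $\overbar{D}_b\leq 1$ on the latter. The one genuine difference is the telescoping: you use dyadic scales $2^jn$, which forces you to apply the LDE three times per rung (at $2^jn$ and $2^{j+1}n$ at $\omega$, and at $2^jn$ at $T^{2^jn}\omega$) before taking a union bound over $j$. The paper telescopes over consecutive indices $i,i+1$ instead; the advantage is that the increment $d(g_1(T^i\omega)x_0,x_0)\leq\log_b d_\infty(g_1)$ is bounded deterministically, so each step needs the LDE only for the displacement lower bounds at scales $i$ and $i+1$, and the union bound is over $i\geq n$. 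Both union bounds give the same $b^{-nc\varepsilon^2}$ ceiling, so the conclusion is identical; your version is slightly heavier in bookkeeping but not wrong. One small remark: you invoke Theorem~\ref{hmet} for the a.s.\ existence of $p_{g_1}^{(\infty)}$ and then separately re-prove Cauchyness via the dyadic sum, which is harmless but redundant --- the Cauchy estimate already yields existence of the Gromov limit, as the paper notes, since $\overbar{D}_b$ extends to $\bord X$ by strong hyperbolicity.
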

	
	\begin{proof}
		Consider $\gamma_1$ and $\varepsilon$ given as in the proof of the previous proposition and $c$ the large deviation paramenter. As well as the deviation sets
		\begin{equation*}
			\mathcal{B}_n(g_1) = \left\{\omega \in \Omega \, : \, \left| \frac{1}{n}d(g_1^{(n)}(\omega)x_0\, , \, x_0) - \ell(g_1) \right| > \varepsilon \right\}
		\end{equation*}
		Recall the control, $d_\infty(g_1)<C$(see Lemma \ref{normIneq}) for every $\omega \notin \mathcal{B}_n(g_1)$ 
		\begin{align*}
			\overbar{D}_b(g_1^{(n)}(\omega) x_0\, , \, g_1^{(n+1)}(\omega)x_0) & \leq b^{\frac{1}{2} \left[  d(g_1(T^n\omega)x_0, x_0) - d(g_1^{(n)}(\omega)x_0,x_0)- d(g_1^{(n+1)}(\omega)x_0,x_0)\right]} \\
			& \leq \sqrt{C} b^{-n\gamma_1}
		\end{align*}
		Hence, for every $m>n$ 
		\begin{align*}
			\overbar{D}_b(g_1^{(n)}(\omega) x_0\, , \, g_1^{(m)}(\omega)x_0) & \leq \sum_{i=n}^{m-1} D_b(g_1^{(i)}(\omega) x_0\, , \, g_1^{(i+1)}(\omega)x_0) \\
			& \leq \sqrt{C} \sum_{i=n}^{m-1}  b^{-i\gamma_1} \\
			& \leq \frac{\sqrt{C}}{1-b^{-\gamma_1
			}} b^{-n\gamma_1},
		\end{align*}
		hence $g_1^{(n)}(\omega)x_0$ is a Gromov sequence, in particular it converges to some point in $\partial X$. With this we obtain $\overbar{D}_b(p_{g_1}^{(n)}\, , \, p_{g_1}^{(\infty)})\lesssim b^{-nc\varepsilon^2}$. Integrating over $\omega$ yiels the result.
	\end{proof}
	
	The proof of item 3) in Theorem \ref{ContTheorem} is now analogue to that of item 1).
	
	\section{Large Deviations for the Drift in Markov Systems}
	
	In this section we obtain the large deviations. Although the method used is based in Nagaev's \cite{nagaev1957some}, we will apply Duarte and Klein's recipe \cite{duarte2016lyapunov}. In §\ref{method} we describe the recipe and ready the ingredients laid by Duarte and Klein whilst §\ref{methodApp} is devoted to proving the large deviations. Many of the arguments displayed here are an adaptation of what was  done in $\cite{sampaio2021regularity}$ for random walks.

	Let us recall the reader once more that $X$ stands for a $\delta$-hyperbolic metric space with a basepoint $x_0$, $G$ for its groups of isometries and $b$ for a real number between $1$ and $2^{1/\delta}$.
	
	\subsection{The method}
	\label{method}
	
	Consider a Markov system $(K, \mu)$ on a metric space $\Gamma$ and let $\Omega = \Gamma^\mathbb{N}$. Given some Borel measurable observable $\zeta:\Gamma \to \mathbb{R}$, let $\hat{\zeta}:\Omega \to \mathbb{R}$ be the Borel measurable function $\hat{\zeta}(\omega) = \zeta(\omega_0)$. We call a sum process of $\zeta:\Gamma \to \mathbb{R}$ the sequence of random variables $\{S_n(\zeta)\}$ on $(\Omega, \mathcal{B})$,
	\begin{equation*}
		S_n(\zeta)(\omega) := \sum_{i=0}^{n-1} \hat{\zeta} \circ T^j(\omega) = \sum_{i=0}^{n-1} \zeta(\omega_j).
	\end{equation*}
	An observed Markov system on $\Gamma$ is a triple $(K, \mu, \zeta)$ where $(K, \mu)$ is a Markov system on $\Gamma$ and $\zeta: \Gamma\to \mathbb{R}$ is a Borel-measurable function.
	
	Recall that $\mathbb{P}_{\omega_0}$ stands for the Kolmogorov extension of  $(K,\delta_{\omega_0})$.
	\begin{definition}[Large deviations of exponential type]
		We say that $\zeta$ satisfies large deviation of exponential type if there exist positive constants $b, C, k, \varepsilon_0$ and $n_0$ such that for all $n>n_0$, $0<\varepsilon<\varepsilon_0$ and $\omega_0\in \Gamma$,
		\begin{equation*}
			\mathbb{P}_{\omega_0} \left\{ \omega\in \Omega \, : \, \left|\frac{1}{n}S_n(\zeta)(\omega)-\mathbb{E}_\mu(\zeta)\right| > \varepsilon \right\} \leq Cb^{-k\varepsilon^2n}.
		\end{equation*}
	\end{definition}
	
	We will obtain the large deviations in Theorem \ref{LDT} by exploring the properties of contracting operators on suitable Banach spaces. Let us start by introducing the operators. Consider $K$ a Markov kernel on a metric space $\Gamma$, the operator $Q_K : L^\infty(\Gamma) \to L^\infty(\Gamma)$, given by
	\begin{equation*}
		(Q_Kf)(\omega_0) = \int_\Gamma f(\omega_1)K(\omega_0, d\omega_1),
	\end{equation*}
	is called the Markov operator. The Markov operator allows us to characterize stationary measure in a more useful way, with effect, $\mu$ is $K$-stationary if and only if
	\begin{equation*}
	    \int Q_Kf d\mu = \int f d\mu
	\end{equation*}
	for every $f\in L^1(\Gamma)$.
	Let now $(K, \mu, \zeta)$ be an observed Markov system  on a given metric space $\Gamma$, then we call the operator $Q_{K,\zeta}:L^\infty(\Gamma)\to L^\infty(\Gamma)$ given by
	\begin{equation*}
		(Q_{K,\zeta}f)(\omega_0) := \int_\Gamma f(\omega_1)b^{\zeta(\omega_1)} K(\omega_0, d\omega_1),
	\end{equation*}
	and $b>0$ the Laplace-Markov operator.
	
	We will now follow closely \cite{duarte2016lyapunov} as we introduce a series of assumptions, eleven to be exact, which yield an abstract LDT. In the next section we make sense of this setting and prove the assumptions hold as to obtain the large deviations. The main difference between the two settings is the fact that we apply these results to not necessarily compact spaces.
	
	Let $(\mathcal{M}, \textrm{dist})$ be a metric space of observed Markov systems $(K, \mu, \zeta)$  on a given metric space $\Gamma$. Consider as well a scale of Banach algebras $(B_\alpha, ||\cdot ||_\alpha)$ indexed in $\alpha \in [0,1]$, where each $B_\alpha$ is a space of bounded Borel measurable functions on $\Gamma$. We assume that there exists seminorms $\upsilon_\alpha:B_\alpha \to [0, +\infty)$ such that for every $0\leq \alpha \leq 1$,
	\begin{enumerate}
		\item [A1)] $||f||_\alpha = \upsilon_\alpha(f) + ||f||_\infty$,
		\item [A2)] $B_0=L^\infty(\Sigma)$ and $||\cdot||_0$ is equivalent to $||\cdot||_\infty$,
		\item [A3)] $B_\alpha$ is a lattice, i.e., if $f\in B_\alpha$ then $\overbar{f}, |f|\in B_\alpha$,
		\item [A4)] $B_\alpha$ is a Banach algebra with unity $\mathbf{1}\in B_\alpha$ and $\upsilon_\alpha(\mathbf{1})=0$.
	\end{enumerate}
	Assume also that for every $0\leq\alpha_0 < \alpha_1 < \alpha_2 \leq1$,
	\begin{enumerate}
		\item [B1)] $B_{\alpha_2}\subset B_{\alpha_1}\subset B_{\alpha_0} $,
		\item [B2)] $\upsilon_{\alpha_0}(f) \leq \upsilon_{\alpha_0}(f) \leq \upsilon_{\alpha_0}(f)$, for every $f\in B_{\alpha_2}$,
		\item [B3)] $\upsilon_{\alpha_1}(f) \leq \upsilon_{\alpha_0}(f)^{\frac{\alpha_2-\alpha_1}{\alpha_2-\alpha_0}} \, \upsilon_{\alpha_2}(f)^{\frac{\alpha_1-\alpha_0}{\alpha_2-\alpha_0}}$, for every $f\in B_{\alpha_2}$.
	\end{enumerate}
	The assuptions $A*)$ and $B*)$ exhaust our assuptions on the Banach algebras and will be the simple part of what is to come. Finally, for our assumptions on $\mathcal{M}$, assume there exists an interval $[\alpha_1, \alpha_0]\subset(0,1]$ with $\alpha_1 < \alpha_0/2$ such that  for every $\alpha \in [\alpha_1, \alpha_0]$ the following properties hold,
	\begin{enumerate}
		\item [C1)] $ (K,\mu, -\zeta)\in \mathcal{M}$, whenever $ (K,\mu, \zeta)\in \mathcal{M}$.
		\item [C2)] The Markov operators $Q_K:B_\alpha \to B_\alpha$ are uniformly strongly mixing. That is, there exist $C>0$ and $0<\sigma<1$ such that for every $(K, \mu, \zeta)\in \mathcal{M}$ and $f\in B_\alpha$,
		\begin{equation*}
			\left|\left|Q_K^nf - \int_{\Sigma}f(\omega_0) d\mu (\omega_0)\right|\right|_\alpha \leq C\sigma^n ||f||_ \alpha.	
		\end{equation*}
		\item [C3)] The operators $Q_{K, z\zeta}$ act continuously on the Banach algebra $B_\alpha$ uniformly in $ (K,\mu, \zeta)\in \mathcal{M}$. With effect, we assume, there are positive constants $c$ and $M$ such that for $i=0,1,2$, $|z|<c$ and $f\in B_\alpha$
		\begin{equation*}
			Q_{K, z\zeta}(f\zeta^i) \in B_\alpha \, \textrm{  and  } \, ||Q_{K, z\zeta}(f\zeta^i)|| \leq M ||f||_\alpha.
		\end{equation*}
		\item [C4)] Consider the family of maps $(K,\mu, \zeta) \to Q_{K, z\zeta}$ indexed in $|z|<c$, there exists $0<\theta\leq 1$ such that for every $|z|<b$, $f\in B_\alpha$ and $  (K_1,\mu_1, \zeta_1), (K_2,\mu_2, \zeta_2)\in \mathcal{M}$,
		\begin{equation*}
			|| Q_{K_1, z\zeta_1}f- Q_{K_2, z\zeta_2}f||_\infty \leq M||f||_\alpha \textrm{dist}\left((K_1,\mu_1, \zeta_1), (K_2,\mu_2, \zeta_2)\right)^\theta.
		\end{equation*}
	\end{enumerate}
	
	Under all these assumptions the following abstract LDT theorem holds:
	
	\begin{theorem}[in \cite{duarte2016lyapunov}]
		\label{LDTT}
		Given $(K_0,\mu_0, \zeta_0)\in \mathcal{M}$ and $0<s<\infty$ large enough (which can be made precise), there exists a neighbourhood $V$ of $(K_0,\mu_0, \zeta_0)\in \mathcal{M}$, $C>0$, $\varepsilon_0>0$ and $n_0 \in \mathbb{N}$, such that for every $(K,\mu, \zeta)\in V$, $0< \varepsilon < \varepsilon_0$, $\omega_0\in \Sigma$ and $n>n_0$
		\begin{equation*}
			\mathbb{P}_{\omega_0} \left[ \left| \frac{1}{n}S_n(\zeta) - \mathbb{E}_ \mu(\zeta) \right| \geq \varepsilon \right] \leq Cb^{-\frac{\varepsilon^2}{s}n}.
		\end{equation*} 
		which averaging over $\omega_0$ with respect to $\mu$ yields 
		\begin{equation*}
			\mathbb{P}_{\mu} \left[ \left| \frac{1}{n}S_n(\zeta) - \mathbb{E}_ \mu(\zeta) \right| \geq \varepsilon \right] \leq Cb^{-\frac{\varepsilon^2}{s}n}.
		\end{equation*} 
	\end{theorem}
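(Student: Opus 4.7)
The plan is to follow the Nagaev spectral method, adapted to the abstract Banach scale specified by assumptions A), B), C). The central object is the family of Laplace--Markov operators $Q_{K, z\zeta}: B_\alpha \to B_\alpha$, which at $z=0$ reduces to the ordinary Markov operator $Q_K$. Hypothesis C2) gives exactly that $Q_K$ has a spectral gap on $B_\alpha$: the eigenvalue $1$ is simple, with eigenvector $\mathbf{1}$ and left eigenfunctional $f \mapsto \int f\, d\mu$, and the rest of the spectrum lies in a disk of radius $\sigma < 1$, uniformly in $(K,\mu,\zeta) \in \mathcal{M}$.

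First I would apply perturbation theory of Kato type. Using C3) and C4), the map $z \mapsto Q_{K, z\zeta}$ is smooth near $z = 0$ as a family of bounded operators on $B_\alpha$, uniformly over $(K,\mu,\zeta)$ in a neighbourhood $V$ of $(K_0, \mu_0, \zeta_0)$, so the spectral gap of $Q_K$ persists under perturbation. This yields, for $|z| < c_0$ small, a simple dominant eigenvalue $\lambda(z)$ with $\lambda(0) = 1$, a rank-one spectral projector $P(z)$, and a decomposition
\begin{equation*}
Q_{K, z\zeta}^n = \lambda(z)^n P(z) + R(z)^n,
\end{equation*}
where $R(z)$ has spectral radius at most some $\sigma' < 1$. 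Differentiating the eigenvalue equation at $z=0$ and pairing against $\mu$ identifies $\Lambda'(0) = \mathbb{E}_\mu(\zeta)$ where $\Lambda := \log_b \lambda$, while C3) bounds $\Lambda''(0)$ uniformly on $V$, producing the Taylor expansion $\Lambda(z) = \mathbb{E}_\mu(\zeta)\, z + O(z^2)$.

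The Chernoff step comes next. The telescoping identity
\begin{equation*}
\mathbb{E}_{\omega_0} \bigl[ b^{z\, S_n(\zeta)} \bigr] = (Q_{K, z\zeta}^n \mathbf{1})(\omega_0),
\end{equation*}
combined with the spectral decomposition above, evaluates to $\lambda(z)^n \, (P(z)\mathbf{1})(\omega_0) + O((\sigma')^n)$. Applying the exponential Markov inequality and optimizing over real $z$ of order $\varepsilon$ yields the upper tail bound $\mathbb{P}_{\omega_0}[S_n(\zeta)/n - \mathbb{E}_\mu(\zeta) > \varepsilon] \leq C b^{-\varepsilon^2 n / s}$, with the symmetric lower tail following by running the argument with $-\zeta$ in place of $\zeta$ (which belongs to $\mathcal{M}$ by C1)). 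Integrating against $\mu$ in $\omega_0$ gives the unconditional statement.

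The main obstacle is extracting genuine uniformity of every constant over $V$. Spectral gaps are qualitatively stable under small operator perturbations, but quantitative control on $\sigma'$, on the radius of analyticity of $\lambda(z)$ and on the operator norm of $P(z)$ must be derived from the Banach-scale hypotheses rather than from any single Markov system; this is precisely where the Hölder exponent $\theta$ in C4) and the interpolation inequality B3) enter, allowing one to trade strong regularity in $B_{\alpha_0}$ for effective convergence in $B_{\alpha_1}$ and thus close the estimates with constants that depend only on $V$.
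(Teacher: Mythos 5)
The paper does not actually prove this statement: it is quoted as a black box from Duarte and Klein's monograph \cite{duarte2016lyapunov}, and the surrounding sections only verify that the conditions A), B), C) hold in the strongly-hyperbolic setting. Your sketch reproduces the route taken in that reference, namely the Nagaev--Guivarc'h spectral method: $z\mapsto Q_{K,z\zeta}$ is an analytic perturbation of $Q_K$ on $B_\alpha$, the uniform spectral gap from C2) yields a dominant simple eigenvalue $\lambda(z)$ with rank-one projector $P(z)$ for $|z|$ small, the identity $\mathbb{E}_{\omega_0}\bigl[b^{z S_n(\zeta)}\bigr]=(Q_{K,z\zeta}^n\mathbf{1})(\omega_0)$ reduces the moment generating function of $S_n(\zeta)$ to $\lambda(z)^n$ up to a geometrically small error, and the Chernoff optimization with $z\asymp\varepsilon$ produces the subgaussian rate; the lower tail via $(K,\mu,-\zeta)\in\mathcal{M}$ (condition C1)) and the averaging over $\omega_0$ are exactly as you say. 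So in outline your proposal is correct and matches the source.

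The one place to be more precise is your phrase that by C3) and C4) the family is ``smooth $\ldots$ as a family of bounded operators on $B_\alpha$, uniformly over $(K,\mu,\zeta)$, so the spectral gap persists by Kato perturbation.'' Analyticity in $z$, with operator norm bounds on $B_\alpha$, is genuinely delivered by C3), so the $z$-perturbation is classical Kato. But the dependence on the Markov data is not: C4) controls $||Q_{K_1,z\zeta_1}f-Q_{K_2,z\zeta_2}f||_\infty$, that is, the $B_0$-norm of the difference, against $||f||_\alpha$. The map $(K,\mu,\zeta)\mapsto Q_{K,z\zeta}$ is therefore continuous only from the strong to the weak norm, and classical Kato perturbation does not give stability of the spectral gap (nor uniform bounds on $P(z)$ and on the subdominant radius $\sigma'$) under such strong-to-weak perturbations. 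This is exactly where the interpolation inequality B3) and the three-level scale $B_{\alpha_2}\subset B_{\alpha_1}\subset B_{\alpha_0}$ enter: Duarte and Klein run a Keller--Liverani-type argument, trading $B_{\alpha_0}$-regularity and $B_0$-smallness for quantitative resolvent control in an intermediate $B_{\alpha_1}$. You acknowledge the uniformity issue in your final paragraph, but the earlier appeal to ``Kato type'' is misleading, because the uniformity over $V$ is precisely the part that classical Kato cannot supply and that makes the abstract theorem nontrivial.
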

	
	\begin{remark}
	    By choosing a large $s$ and $n\geq \bar{n}(\varepsilon)$ we can make $C=1$, thus obtaining large deviations as in Definition \ref{LDE}.
	\end{remark}
	
	\subsection{Obtaining the Large deviations}
	\label{methodApp}
	
	Let $X$ be an Hyperbolic metric space, $X^h$, $\partial X$ denote its horofunctions compactification and Gromov boundary respectively. We denote by $D_b$ the visual metric on $\partial X$, where $1<b\leq2^{1 / \delta}$ is fixed. In this section we use Theorem $\ref{LDTT}$ to obtain our large deviations for the drift. From this point on $\Sigma$ stands for a compact metric space; and in the scope of the previous section $\Gamma$ stands for $\Sigma \times \Sigma \times \partial X$ with the product metric.
	
	\subsubsection{Verifying conditions A*) and B*)}
	
	Given $0 \leq \alpha \leq 1$ and $f\in L^\infty(\Gamma)$, define
	\begin{align*}
		\upsilon_\alpha(f) & :=  \sup_{\substack{(\omega_1,\omega_2)\in \Sigma \times \Sigma \\ \xi \neq \eta} } \frac{|f(\omega_1,\omega_2, \xi)-f(\omega_1,\omega_2,\eta)|}{D_b(\xi, \eta)^\alpha},\\
		||f||_\alpha & := ||f||_\infty + \upsilon_\alpha(f),
	\end{align*}
	and set 
	\begin{equation*}
		\mathcal{H}_\alpha(\Gamma) : = \left\{\ f\in L^\infty(\Gamma) \, : \, ||f||_\alpha < \infty \right\}.
	\end{equation*}
	the space of boundary Hölder continuous functions in $\Gamma$. We call $\upsilon_\alpha(f)$ the boundary Hölder exponent of $f$.
	
	\begin{proposition}
		The family $\{\mathcal{H}_\alpha(\Gamma)\}$, for $0\leq \alpha \leq 1$, consist of Banach algebras with norm $||f||_\alpha$ satisfying the conditions $A*)$ and $B*)$.
	\end{proposition}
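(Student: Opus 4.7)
The plan is to verify the eleven-ish sub-conditions essentially by direct computation, since none of them is deep: they are all structural properties of a family of Hölder-type spaces parameterized by the exponent in the boundary direction. I will take them in the order A1)--A4), then B1)--B3), collecting the observations about the visual metric $D_b$ that do the real work.

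For the A-conditions, A1) is literally the definition of $\|\cdot\|_\alpha$. For A2), fix $\alpha=0$: since $D_b\le 1$ on $\bord X\times\bord X$, the ratio $|f(\omega_1,\omega_2,\xi)-f(\omega_1,\omega_2,\eta)|/D_b(\xi,\eta)^0$ is just $|f(\cdot,\xi)-f(\cdot,\eta)|$, hence $\upsilon_0(f)\le 2\|f\|_\infty$ and the two norms are equivalent while $\mathcal H_0(\Gamma)=L^\infty(\Gamma)$. A3) follows from $|\bar a-\bar b|=|a-b|$ and the reverse triangle inequality $||a|-|b||\le|a-b|$, which show $\upsilon_\alpha(\bar f)=\upsilon_\alpha(f)$ and $\upsilon_\alpha(|f|)\le\upsilon_\alpha(f)$. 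For A4), $\upsilon_\alpha(\mathbf 1)=0$ is trivial; the Banach-algebra inequality follows from the standard Leibniz-type bound
\begin{equation*}
    |(fg)(\cdot,\xi)-(fg)(\cdot,\eta)|\le \|f\|_\infty\,|g(\cdot,\xi)-g(\cdot,\eta)|+\|g\|_\infty\,|f(\cdot,\xi)-f(\cdot,\eta)|,
\end{equation*}
which upon dividing by $D_b(\xi,\eta)^\alpha$ and taking suprema yields $\upsilon_\alpha(fg)\le\|f\|_\infty\upsilon_\alpha(g)+\|g\|_\infty\upsilon_\alpha(f)$, and hence $\|fg\|_\alpha\le\|f\|_\alpha\|g\|_\alpha$. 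Completeness is the one place a small argument is needed: given a $\|\cdot\|_\alpha$-Cauchy sequence $(f_n)$, it is $\|\cdot\|_\infty$-Cauchy so converges uniformly to some $f\in L^\infty(\Gamma)$; passing to the limit in the difference quotients (which are uniformly bounded by the Cauchy property) shows $\upsilon_\alpha(f)<\infty$ and $\|f_n-f\|_\alpha\to 0$.

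The B-conditions all reduce to the single observation that $0\le D_b\le 1$. For $0\le\alpha_0<\alpha_1<\alpha_2\le 1$ this gives $D_b^{\alpha_2}\le D_b^{\alpha_1}\le D_b^{\alpha_0}$, and dividing the fixed numerator $|f(\cdot,\xi)-f(\cdot,\eta)|$ by these quantities and taking suprema yields $\upsilon_{\alpha_0}(f)\le\upsilon_{\alpha_1}(f)\le\upsilon_{\alpha_2}(f)$, proving B2) (and correcting the misprint in the statement), and B1) as an immediate consequence. For B3), write $\alpha_1=\beta\alpha_0+(1-\beta)\alpha_2$ with $\beta=(\alpha_2-\alpha_1)/(\alpha_2-\alpha_0)$, so that
\begin{equation*}
    D_b(\xi,\eta)^{\alpha_1}=\bigl(D_b(\xi,\eta)^{\alpha_0}\bigr)^{\beta}\bigl(D_b(\xi,\eta)^{\alpha_2}\bigr)^{1-\beta},
\end{equation*}
and therefore the pointwise difference quotient at exponent $\alpha_1$ factors as the $\beta$- and $(1-\beta)$-powers of the difference quotients at $\alpha_0$ and $\alpha_2$; taking the supremum on both sides gives the interpolation inequality.

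I do not foresee any genuine obstacle: everything is bookkeeping, the only ingredient beyond the definitions being the uniform bound $D_b\le 1$. The only step that requires any care rather than a one-line computation is the completeness argument in A4), and even there the standard Hölder-space completeness proof transfers verbatim because the extra variables $(\omega_1,\omega_2)$ enter only through a supremum.
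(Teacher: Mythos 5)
Your proposal is correct and follows essentially the same route as the paper's proof: the Leibniz bound $\upsilon_\alpha(fg)\le\|f\|_\infty\upsilon_\alpha(g)+\|g\|_\infty\upsilon_\alpha(f)$ for A4), monotonicity of $D_b^\alpha$ coming from $D_b\le 1$ for the B-inclusions, and the factorization $D_b^{\alpha_1}=(D_b^{\alpha_0})^{\beta}(D_b^{\alpha_2})^{1-\beta}$ with $\beta=(\alpha_2-\alpha_1)/(\alpha_2-\alpha_0)$ followed by $\sup(AB)\le\sup A\cdot\sup B$ for B3). The only difference is that you spell out the completeness argument and the lattice property A3), which the paper dismisses as standard; that is a reasonable thing to include but does not change the substance.
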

	
	\begin{proof}
		
		It is a standard proof that $\mathcal{H}_\alpha(\Gamma)$ are Banach algebras. Now points $A1)$, $A3)$, $B1)$, $B2)$ are either clear or follow from some immediate computation. For point $A2)$ notice that for $\alpha =0$ we have $||f||_\alpha \leq 2||f||_\infty$. Point $A4)$ follows from the immediate inequality
		\begin{equation*}
			\upsilon_\alpha(fg) \leq ||f||_\infty \upsilon_\alpha(g) + ||g||_\infty \upsilon_\alpha(f).
		\end{equation*}
		For point $B3)$, notice that given $\alpha_0, \alpha_2, s \in [0,1]$,
		\begin{align*}
			\upsilon_{s\alpha_0 +(1-s)\alpha_2}(f) &  = \sup_{\substack{(\omega_1,\omega_2)\in \Sigma \times \Sigma \\ \xi \neq \eta} } \frac{|f(\omega_1,\omega_2,\xi)-f(\omega_1,\omega_2,\eta)|^{s+(1-s)}}{D_b(\xi, \eta)^{s\alpha_0 +(1-s)\alpha_2}} \\
			& \leq \sup_{\substack{(\omega_1,\omega_2)\in \Sigma \times \Sigma \\ \xi \neq \eta} } \frac{|f(\omega_1,\omega_2,\xi)-f(\omega_1,\omega_2,\eta)|^{s}}{D_b(\xi, \eta)^{s\alpha_0}} \\
			&  \hspace{2cm} \times
			\sup_{\substack{(\omega_1,\omega_2)\in \Sigma \times \Sigma  \\ \xi \neq \eta} } \frac{|f(\omega_1,\omega_2,\xi)-f(\omega_1,\omega_2,\eta)|^{(1-s)}}{D_b(\xi, \eta)^{(1-s)\alpha_2}} \\ \\
			& =  \upsilon_{\alpha_0}(f)^s \upsilon_{\alpha_2}(f)^{1-s},
		\end{align*}
		picking $s= \frac{\alpha_2-\alpha_1}{\alpha_2-\alpha_0}$ the result follows.
	\end{proof}

	\subsubsection{Verifying conditions C*)}
	
	Recall the space $S^\infty(\Sigma \times \Sigma, G)$ of bounded measurable cocycles $g:\Sigma \times \Sigma \to G$ introduced in section 1.3. Each cocycle $g\in S^\infty(\Sigma \times \Sigma,\, G)$ defines a Markov kernel on $\Gamma$ given by
	\begin{equation*}
		\overbar{K}_g(\omega_0, \omega_1, \xi) := \int_\Sigma \delta_{(\omega_1, \omega_2, g(\omega_1, \omega_2)^{-1} \xi)} K(\omega_1, d\omega_2),
	\end{equation*}
	as well as an associated Markov operator $Q_g:L^\infty(\Gamma) \to L^\infty(\Gamma)$ with expression
	\begin{equation*}
		(\overbar{Q}_gf)(\omega_0,\omega_1, \xi) := \int_\Sigma f(\omega_1,\, \omega_2, \, g(\omega_1, \omega_2)^{-1} \xi) K(\omega_1, d\omega_2).
	\end{equation*}
	The reason for looking at the action of the inverse comes from the relation (\ref{horoAction}). For each $g\in C(\Sigma \times \Sigma,\, G)$ consider the measurable observable $\zeta_g:\Gamma \to \mathbb{R}$
	\begin{equation}
	    \label{zetaG}
		\zeta_g(\omega_0,\omega_1, \xi) :=   h_\xi(g(\omega_0,\omega_1)x_0).
	\end{equation}
	where $h_\xi$ is the horofunction related to $\xi$ through the local minimum map homeomorphism. Measurability of $\zeta_g$ follows from continuity. Notice that the set $\Omega \subset \Gamma^\mathbb{N}$ consisting of sequences $\kappa_n  = (\omega_{n-1}, \omega_{n}, \xi_n)$, where $\xi_n = (g(\omega_0, \omega_1)g(\omega_1, \omega_2)...g(\omega_{n-1},\omega_n))^{-1} \xi_0$ and notice that this is a set of full measure. The sum process in $\Sigma^\mathbb{N}$ is 
	\begin{align*}
		(S_n \zeta)(\omega) &= \sum_{i=0}^{n-1} \zeta(\omega_i, \omega_{i+1}, \xi_i)\\
		&= \sum_{i=0}^{n-1} h_{\xi_i}(g(\omega_i, \omega_{i+1}) x_0)  \\
		& = \sum_{i=0}^{n-1} (g(\omega_0, \omega_1)g(\omega_1, \omega_2)...g(\omega_i,\omega_{i+1}))^{-1} \cdot h_{\xi_0}(g_ix_0)\\
		&= \sum_{i=0}^{n-1} h_{\xi_0}(g^{(i+1)}(\omega)x_0) - h_{\xi_0}(g^{(i)}(\omega)x_0) \\
		& =  h_{\xi_0}(g^{(n)}(\omega)x_0).
	\end{align*}
	These equalities are mostly a consequence of the property $g\cdot h_\xi = h_{g\xi}$ and $(\ref{horoAction})$. In what follows we will prove that provided $g\in C(\Sigma \times \Sigma,\,  G)$ is irreducible with positive drift, then there exists a unique $\overbar{K}_g$-stationary measure which we denote by $\mu_g$. Finally we consider the space of observed Markov systems 
	\begin{equation*}
		\mathcal{M} := \{ (\overbar{K}_g, \mu_g, \pm \zeta_g) \, : \, g\in C(\Sigma \times \Sigma,\,  G), \, \textrm{g is irreducible and } \ell(g)>0 \},
	\end{equation*}
	where $\mu_g$ is the $K_g$-stationary measure, with the metric
	\begin{equation*}
		\textrm{dist}((\overbar{K}_{g_1}, \mu_{g_1}, \zeta_{g_1}), (\overbar{K}_{g_2}, \mu_{g_2}, \zeta_{g_2}) ) := d_\infty(g_1, g_2).
	\end{equation*}
    Due to the metric used, neighbourhoods in $\mathcal{M}$ are naturally identified with neighbourhoods in $C(\Sigma \times \Sigma,\,  G)$.
	Our main goal for the remainder of this section is to prove the following proposition:
	
	\begin{proposition}
		\label{conditionsC}
		The space $\mathcal{M}$ satisfies the $C*)$ conditions.
	\end{proposition}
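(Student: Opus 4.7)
The plan is to verify C1), C2), C3), C4) in turn, with all constants made uniform in a suitable $d_\infty$-neighbourhood of a fixed $g_0 \in C(\Sigma \times \Sigma, G)$ which is irreducible with positive drift; this local uniformity is all that is needed to apply Theorem \ref{LDTT}. Condition C1) is built into the definition of $\mathcal{M}$, so it holds trivially.

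Conditions C3) and C4) reduce to establishing regularity and continuity of the observable $\zeta_g(\omega_1,\omega_2,\xi) = h_\xi(g(\omega_1,\omega_2)x_0)$. Boundedness is immediate from $|h_\xi(y)| \leq d(y, x_0)$, which gives $||\zeta_g||_\infty \leq \log_b d_\infty(g)$. For the Hölder seminorm in $\xi$, strong hyperbolicity together with Proposition \ref{VisualMetric} implies that $\xi \mapsto h_\xi(y)$ is Hölder on $\partial X$ with constant depending polynomially on $b^{d(y,x_0)}$, so $\zeta_g \in \mathcal{H}_\alpha(\Gamma)$ with $||\zeta_g||_\alpha$ controlled by $d_\infty(g)$. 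The weight $b^{z\zeta_g}$ is then uniformly bounded for small $|z|$, and the Banach-algebra property of $\mathcal{H}_\alpha$ delivers C3). For C4), one splits the difference $\overbar{Q}_{g_1, z\zeta_{g_1}} f - \overbar{Q}_{g_2, z\zeta_{g_2}} f$ into a piece arising from the differing actions on $\xi$ (controlled by the Hölder regularity of $f$ as $||f||_\alpha d_\infty(g_1, g_2)^\alpha$) and a piece from the differing weights $b^{z\zeta_{g_1}} - b^{z\zeta_{g_2}}$ (controlled by $|z|\, d_\infty(g_1, g_2)$ after the mean value theorem and a uniform bound on $\zeta_{g_i}$).

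The main obstacle is C2), the uniform strong mixing of $\overbar{Q}_g$ on $\mathcal{H}_\alpha(\Gamma)$. The strategy is the Doeblin-Fortet / Ionescu-Tulcea method: prove a Lasota-Yorke type inequality $\upsilon_\alpha(\overbar{Q}_g^n f) \leq \lambda^n \upsilon_\alpha(f) + C_n ||f||_\infty$ with $\lambda \in (0,1)$, then combine it with the strong mixing of $K$ on $\Sigma$ (granted by hypothesis) to extract the desired spectral gap on $\mathcal{H}_\alpha$. The contraction on the $\xi$-factor is read off from Proposition \ref{VisualMetric}, which iterates along a trajectory to give
\[
\overbar{D}_b\bigl((g^{(n)}(\omega))^{-1}\xi, (g^{(n)}(\omega))^{-1}\eta\bigr) = \overbar{D}_b(\xi,\eta)\, b^{-\frac{1}{2}[h_\xi(g^{(n)}(\omega)x_0) + h_\eta(g^{(n)}(\omega)x_0)]}.
\]
The Hyperbolic Multiplicative Ergodic Theorem (Theorem \ref{hmet}) together with positivity of the drift forces $h_\xi(g^{(n)}(\omega) x_0) \sim n \ell(g)$ for $\xi$ away from the single exceptional horofunction $h_\omega^-$, yielding an average contraction rate of order $b^{-n \ell(g)/2}$. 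The role of irreducibility is to rule out a measurable invariant family of horofunctions which would obstruct this contraction from holding in operator norm. The final subtlety is uniformity: compactness of $\Sigma$ together with continuity of the drift and of the horofunction map in $g$ (shown earlier) let us take the contraction and mixing constants uniform on a $d_\infty$-neighbourhood of $g_0$.
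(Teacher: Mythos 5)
Your outline is correct and follows essentially the same route as the paper: C1) is trivial; C3) and C4) come from boundedness and Hölder regularity of $\zeta_g$ plus a mean-value-theorem splitting of the difference of Laplace–Markov operators; and C2) is attacked through the average Hölder constant $k_\alpha^n(g)$, the dilation formula of Proposition \ref{VisualMetric}, the Hyperbolic Multiplicative Ergodic Theorem with irreducibility to rule out an invariant horofunction family, and the strong-mixing hypothesis on $\Sigma$ to handle the $\|\cdot\|_\infty$ part. The one place where your description is looser than the paper's argument is C2): you state a Lasota–Yorke inequality and appeal to the Ionescu–Tulcea machinery, whereas the paper actually proves a clean multiplicative contraction $\upsilon_\alpha(\overbar{Q}_g^n f) \le k_\alpha^n(g)\,\upsilon_\alpha(f)$ (no $\|f\|_\infty$ error term) and closes the $\|\cdot\|_\infty$ estimate with an explicit commutative-diagram argument comparing $\overbar{Q}_g$ on $\mathcal{H}_\alpha(\Sigma\times\partial X)$ to $Q_K$ on $L^\infty(\Sigma)$; moreover, turning the heuristic ``$h_\xi(g^{(n)}x_0) \sim n\ell(g)$ away from the exceptional horofunction'' into $k_\alpha^{n_0}(g) < 1$ requires the \emph{uniform} (in $(\omega_0,h)$) convergence of $\frac{1}{n}\mathbb{E}_{\omega_0}[h(g^{(n)}x_0)]$ to $\ell(g)$ — which is the content of Lemma \ref{unifConv}, proved via a Furstenberg–Kifer argument where irreducibility is used — together with a careful Taylor expansion of $b^{-\alpha x}$ to pick $\alpha$ small enough that the second-order term does not spoil the contraction.
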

	
	Notice that the $(\overbar{Q}_g f)(\omega_0, \omega_1, \xi)$ does not depend on the variable $\omega_0$. So we define $\mathcal{H}_\alpha(\Sigma \times \partial X)$ to be the space of functions $f$ in $\mathcal{H}_\alpha(\Gamma)$ that do not depend on $\omega_0$. Notice as well that $\mathcal{H}_\alpha(\Sigma \times \partial X)$ is still a family of Banach algebras satisfying $A*)$ and $B*)$. Our first goal is to prove that this space is invariant under the action of $\overbar{Q}_g$:
	
	\begin{proposition}
		The space $\mathcal{H}_\alpha(\Sigma \times \partial X)$ is invariant by the action of $Q_g$ for $\alpha$ small enough.
	\end{proposition}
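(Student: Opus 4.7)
The plan is to verify the two ingredients that make up the norm $\|\cdot\|_\alpha$ separately: boundedness in sup-norm, which is automatic because $\overbar{Q}_g$ is defined by integrating against a probability measure, and control of the Hölder seminorm $\upsilon_\alpha$, which is the content of the proposition. For functions in $\mathcal{H}_\alpha(\Sigma \times \partial X)$ the Hölder condition only quantifies variation in the boundary variable $\xi$, uniformly in the $\Sigma$-variable. So I need to estimate, for fixed $\omega_1 \in \Sigma$ and $\xi\neq\eta\in\partial X$,
\begin{equation*}
|(\overbar{Q}_g f)(\omega_1,\xi)-(\overbar{Q}_g f)(\omega_1,\eta)| \leq \upsilon_\alpha(f)\int_\Sigma D_b(g(\omega_1,\omega_2)^{-1}\xi\, ,\, g(\omega_1,\omega_2)^{-1}\eta)^\alpha K(\omega_1,d\omega_2).
\end{equation*}

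The key step is to estimate the integrand using the mean value identity in Proposition~\ref{VisualMetric}. Since $\xi,\eta\in\partial X$, the visual metric $D_b$ coincides with $\overbar{D}_b$, and applying the identity to the isometry $g^{-1}$ gives
\begin{equation*}
\frac{\overbar{D}_b(g^{-1}\xi\, ,\, g^{-1}\eta)}{\overbar{D}_b(\xi\, ,\, \eta)} = b^{-\frac{1}{2}\left[h_\xi(gx_0)+h_\eta(gx_0)\right]}.
\end{equation*}
Horofunctions are 1-Lipschitz with $h_\xi(x_0)=0$, hence $|h_\xi(gx_0)|\leq d(gx_0,x_0)$, so the exponent is at most $d(gx_0,x_0)$, yielding $\overbar{D}_b(g^{-1}\xi,g^{-1}\eta)\leq b^{d(gx_0,x_0)}\overbar{D}_b(\xi,\eta)$.

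Raising to the power $\alpha$, integrating against the probability measure $K(\omega_1,\cdot)$, and using $b^{d(g(\omega_1,\omega_2)x_0,x_0)}\leq d_\infty(g)$ (the latter being finite because $g\in S^\infty(\Sigma\times\Sigma,G)$), one obtains the uniform bound
\begin{equation*}
\upsilon_\alpha(\overbar{Q}_g f) \leq d_\infty(g)^\alpha\,\upsilon_\alpha(f),
\end{equation*}
so that $\|\overbar{Q}_g f\|_\alpha \leq \max\{1,d_\infty(g)^\alpha\}\,\|f\|_\alpha$, proving invariance.

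The main technical point is really the single application of Proposition~\ref{VisualMetric}; once that reduction is made, the rest is bookkeeping. The restriction "$\alpha$ small enough" is not needed for invariance itself, which holds on the whole range $\alpha\in(0,1]$, but serves to keep the operator norm $\max\{1,d_\infty(g)^\alpha\}$ close to $1$, a control that will matter in the subsequent verification of the strong mixing condition C2) and of the analytic perturbation bounds required for the twisted operators $\overbar{Q}_{g,z\zeta_g}$ in conditions C3) and C4).
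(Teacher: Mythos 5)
Your proof is correct and is the argument the paper has in mind, just unfolded: the paper packages the two steps you carry out (Proposition~\ref{VisualMetric} to express the contraction factor as $b^{-\frac{1}{2}[h_\xi(gx_0)+h_\eta(gx_0)]}$, then $|h_\xi(gx_0)|\le d(gx_0,x_0)$ together with the finiteness of $d_\infty(g)$) into Lemmas~\ref{contractiveLemma} and~\ref{boundedAH} via the quantity $k_\alpha^n(g)$, specialized to $n=1$. Your observation that the restriction on $\alpha$ plays no role in one-step invariance is also consistent with the paper, where the hypothesis $\alpha<1/n$ in Lemma~\ref{boundedAH} is only needed when iterating the bound over $n$ steps for the mixing estimate.
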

	
	The proof of this proposition is based of the Lemmas \ref{contractiveLemma}, \ref{submultiLemma} and \ref{boundedAH}. First, given $g \in C(\Sigma \times \Sigma, \, G)$ and $0< \alpha< 1$ define the average Hölder constant of $g$ as
	\begin{equation*}
		k_\alpha^n(g) := \sup_{\omega_0\in \Sigma, \xi \neq \eta} \mathbb{E}_{\omega_0} \left[ \left( \frac{D_b(g^{-(n)}\xi \, , \, g^{-(n)}\eta )}{D_b(\xi\, , \, \eta)} \right)^ \alpha \right].
	\end{equation*}
	The relevance of $k_\alpha^n(g)$ becomes evident in the following lemma where we relate it with the contracting behaviour of the Markov operator of $g$.
	
	\begin{lemma}[in \cite{duarte2016lyapunov}]
		\label{contractiveLemma}
		Given $g\in S^\infty(\Sigma \times \Sigma,\,  G)$, $f\in \mathcal{H}_\alpha(\Sigma \times \partial X)$ and $n\in \mathbb{N}$,
		\begin{equation*}
			\upsilon_\alpha(\overbar{Q}_g^nf) \leq k_\alpha^n(g) \upsilon_\alpha(f).
		\end{equation*}
	\end{lemma}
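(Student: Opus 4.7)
The plan is to unfold $\overbar{Q}_g^n$ as an expectation along the Markov chain, bound the oscillation of $\overbar{Q}_g^n f$ in the boundary variable pointwise by the Hölder seminorm of $f$ times a random contraction factor, and then take the expectation of that factor to recognize $k_\alpha^n(g)$.

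First, I would iterate the definition of $\overbar{Q}_g$ and prove by induction on $n$ the identity
\begin{equation*}
(\overbar{Q}_g^n f)(\omega_0,\omega_1,\xi) \;=\; \mathbb{E}_{\omega_1}\!\left[\, f\bigl(\omega_{n-1},\omega_n,\; (g(\omega_1,\omega_2)\cdots g(\omega_{n-1},\omega_n))^{-1}\xi\bigr) \,\right],
\end{equation*}
where $\mathbb{E}_{\omega_1}$ denotes expectation under the Kolmogorov extension $\mathbb{P}_{\omega_1}$ of the Markov chain started at $\omega_1$. This is just a repeated application of the defining formula together with the Markov property, keeping careful track of how the group element accumulates as an inverted product by the relation $g^{-1}\cdot(g'^{-1}\xi) = (g'g)^{-1}\xi$. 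Call this product $\Pi_n(\omega)$, so it is the cocycle product $g^{(n)}$ evaluated on the shifted sequence.

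Next, fix $\omega_0,\omega_1$ and two distinct points $\xi,\eta\in\partial X$. Subtracting the two expectations and using the triangle inequality together with the definition of $\upsilon_\alpha(f)$, I get the pointwise bound
\begin{equation*}
\bigl|(\overbar{Q}_g^n f)(\omega_0,\omega_1,\xi)-(\overbar{Q}_g^n f)(\omega_0,\omega_1,\eta)\bigr|
\;\leq\; \upsilon_\alpha(f)\;\mathbb{E}_{\omega_1}\!\left[\, D_b\bigl(\Pi_n(\omega)^{-1}\xi,\Pi_n(\omega)^{-1}\eta\bigr)^\alpha \,\right].
\end{equation*}
Dividing through by $D_b(\xi,\eta)^\alpha$, the expectation on the right becomes the expected $\alpha$-th power of the ratio $D_b(\Pi_n^{-1}\xi,\Pi_n^{-1}\eta)/D_b(\xi,\eta)$, which is bounded by $k_\alpha^n(g)$ by its very definition. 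Taking the supremum over $(\omega_1,\xi,\eta)$ (and noting that the left-hand side is independent of $\omega_0$) yields $\upsilon_\alpha(\overbar{Q}_g^n f)\leq k_\alpha^n(g)\,\upsilon_\alpha(f)$.

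The only subtlety is the bookkeeping in the first step: the operator $\overbar{Q}_g$ moves from coordinates $(\omega_0,\omega_1,\xi)$ to an integral over $(\omega_1,\omega_2,g(\omega_1,\omega_2)^{-1}\xi)$, so one must verify that after $n$ iterations the inverse group element is exactly the cocycle $g^{(n)}$ associated with the shifted sequence $(\omega_1,\omega_2,\ldots)$; this is what allows the expected contraction ratio to match the definition of $k_\alpha^n(g)$. Once this identification is made, the rest is a one-line Hölder estimate and the conclusion is immediate from linearity of expectation.
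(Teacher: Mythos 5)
Your proof matches the paper's argument exactly: unfold $\overbar{Q}_g^n f$ as an expectation $\mathbb{E}_{\cdot}[f(e_n, g^{-(n)}\xi)]$ along the Markov chain, then apply the Hölder bound inside the expectation and recognize the resulting expected ratio as $k_\alpha^n(g)$. One small bookkeeping slip: in your unfolded formula the product for $\overbar{Q}_g^n$ should carry $n$ factors, so with the chain started at $\omega_1$ the argument of $f$ should be $(\omega_n,\omega_{n+1})$ and the product should run $g(\omega_1,\omega_2)\cdots g(\omega_n,\omega_{n+1})$; as written it is off by one, though this does not affect the validity of the Hölder estimate or the identification with $k_\alpha^n(g)$.
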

	
	\begin{proof}
		Let $f\in \mathcal{H}(\Sigma \times \partial X)$ and $(\omega_0, \xi)\in \Sigma \times \partial X$, recall as well the random variables $e_n:\Omega \to \Sigma$ given by $e_n(\omega)=\omega_n$. Then notice
		\begin{equation*}
			(\overbar{Q}_g^nf)(\omega_0,\xi) = \mathbb{E}_{\omega_0}\left[ f(e_n,g^{-(n)}\xi)\right].
		\end{equation*}
		Hence
		\begin{align*}
			\upsilon_\alpha(\overbar{Q}_g^nf)
			& \leq  \sup_{\omega_0 \in \Sigma, \xi \neq \eta \in \partial X} \frac{\mathbb{E}_{\omega_0} \left|f(e_n, g^{-(n)} \xi) - f(e_n, g^{-(n)} \eta)\right|}{D_b(\xi\, , \, \eta)} \\
			& \leq \upsilon_\alpha(f) \sup_{\omega_0\in \Sigma, \xi \neq \eta} \mathbb{E}_{\omega_0} \left[ \left( \frac{D_b(g^{-(n)}\xi \, , \, g^{-(n)}\eta )}{D_b(\xi\, , \, \eta)} \right)^ \alpha \right] \leq \upsilon_\alpha(f) \, k_\alpha^n(g)
		\end{align*}
	\end{proof}

	\begin{lemma}[in \cite{duarte2016lyapunov}]
		\label{submultiLemma}
		The sequence $(k_\alpha^n(g))$ is sub-multiplicative, that is,
		\begin{equation*}
			k_\alpha^{n+m}(g) \leq k_\alpha^{n}(g)k_\alpha^{m}(g)
		\end{equation*}
	\end{lemma}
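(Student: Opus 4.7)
The plan is to combine the cocycle identity for $g^{-(n)}$ with the Markov property of the process $e_n = \omega_n$. First, I observe that from $g^{(n+m)}(\omega) = g^{(n)}(\omega)\, g^{(m)}(T^n\omega)$ we get the dual identity
\begin{equation*}
g^{-(n+m)}(\omega) \;=\; g^{-(m)}(T^n\omega)\, g^{-(n)}(\omega).
\end{equation*}
Setting $\xi' := g^{-(n)}(\omega)\xi$ and $\eta' := g^{-(n)}(\omega)\eta$, I insert and cancel $D_b(\xi',\eta')$ (which is nonzero because each $g^{-(n)}(\omega)$ is a bijection of $\bord X$) to obtain the pointwise factorization
\begin{equation*}
\frac{D_b(g^{-(n+m)}(\omega)\xi,\, g^{-(n+m)}(\omega)\eta)}{D_b(\xi,\eta)} \;=\; \frac{D_b(g^{-(n)}(\omega)\xi,\, g^{-(n)}(\omega)\eta)}{D_b(\xi,\eta)} \cdot \frac{D_b(g^{-(m)}(T^n\omega)\xi',\, g^{-(m)}(T^n\omega)\eta')}{D_b(\xi',\eta')}.
\end{equation*}
Raising to the $\alpha$-th power turns this into a product of two ratios of the same shape as the integrand defining $k_\alpha^{\cdot}(g)$.

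Next I take $\mathbb{E}_{\omega_0}$ of both sides and condition on the $\sigma$-algebra $\mathcal{F}_n$ generated by $e_0,\ldots,e_n$. The first factor is $\mathcal{F}_n$-measurable, and $\xi',\eta'$ are also $\mathcal{F}_n$-measurable since they are built from $g(\omega_0,\omega_1),\ldots,g(\omega_{n-1},\omega_n)$. By the Markov property, conditional on $\mathcal{F}_n$ the shifted process $T^n\omega$ is distributed as the Markov chain started at $e_n$, so
\begin{equation*}
\mathbb{E}_{\omega_0}\!\left[\left(\tfrac{D_b(g^{-(m)}(T^n\omega)\xi',g^{-(m)}(T^n\omega)\eta')}{D_b(\xi',\eta')}\right)^{\!\alpha} \Big|\, \mathcal{F}_n \right] \;=\; \mathbb{E}_{e_n}\!\left[\left(\tfrac{D_b(g^{-(m)}\xi'',g^{-(m)}\eta'')}{D_b(\xi'',\eta'')}\right)^{\!\alpha}\right]_{\xi''=\xi',\,\eta''=\eta'},
\end{equation*}
which by definition is bounded above by $k_\alpha^m(g)$ uniformly in $e_n, \xi', \eta'$.

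Pulling this uniform bound out through the tower property and then taking the supremum over $\omega_0$ and $\xi\neq\eta$ yields $k_\alpha^{n+m}(g) \leq k_\alpha^n(g)\, k_\alpha^m(g)$. The only genuinely delicate point is the measurability / Markov-property step, which is standard once one has the factorization in hand; the cocycle identity and the telescoping cancellation of $D_b(\xi',\eta')$ are straightforward. No use of strong hyperbolicity or of Proposition~\ref{VisualMetric} is needed for this lemma—the visual-metric ratio factors purely formally.
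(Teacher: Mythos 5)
Your proof is correct and is the standard argument one expects: the dual cocycle identity $g^{-(n+m)}(\omega) = g^{-(m)}(T^n\omega)\,g^{-(n)}(\omega)$, the telescoping factorization of the distortion ratio through the intermediate points $\xi', \eta'$ (legitimate since $g^{-(n)}(\omega)$ is a bijection of $\partial X$, so $\xi' \neq \eta'$ and $D_b(\xi',\eta') > 0$), and the Markov property to replace the conditional expectation of the second factor by $\mathbb{E}_{e_n}[\cdot]$, which is then bounded uniformly by $k_\alpha^m(g)$. The paper defers the proof entirely to the reference, and your write-up supplies exactly the argument the reference uses, correctly transported to the hyperbolic setting; your closing remark that no strong hyperbolicity or Proposition~\ref{VisualMetric} is needed is also accurate, since the lemma is purely formal.
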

	
	\begin{proof}
		See \cite{duarte2016lyapunov}.
	\end{proof}
	
	\begin{lemma}
	    \label{boundedAH}
		Given $g\in S^\infty(\Sigma \times \Sigma,\,  G)$ and $n\in \mathbb{N}$, for every $0<\alpha<\frac{1}{n}$ there exists a constant $C=C(g)$, such that
		\begin{equation*}
			k_\alpha^{n}(g) \leq d_\infty(g).
		\end{equation*} 
	\end{lemma}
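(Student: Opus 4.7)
The plan is to reduce this to a pointwise deterministic bound, so that the expectation becomes essentially trivial, by exploiting the mean value formula of Proposition \ref{VisualMetric}.

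First, I would fix $\omega$ and apply Proposition \ref{VisualMetric} to the isometry $h := g^{-(n)}(\omega)$. This gives
\begin{equation*}
    \frac{\overbar{D}_b(h\xi, h\eta)}{\overbar{D}_b(\xi,\eta)} = b^{-\tfrac{1}{2}\left[ h_\xi(h^{-1}x_0) + h_\eta(h^{-1}x_0) \right]}.
\end{equation*}
Since every horofunction $h_\zeta$ satisfies $|h_\zeta(y)| \leq d(y, x_0)$ (it is $1$-Lipschitz and vanishes at $x_0$), both $h_\xi(h^{-1}x_0)$ and $h_\eta(h^{-1}x_0)$ are at least $-d(h^{-1}x_0, x_0) = -d(h x_0, x_0)$, so the exponent is at most $d(h x_0, x_0)$ and
\begin{equation*}
    \frac{\overbar{D}_b(h\xi, h\eta)}{\overbar{D}_b(\xi,\eta)} \leq b^{d(hx_0, x_0)}.
\end{equation*}
Since $\xi \neq \eta$ lie in $\partial X$, the convention $d(\xi,\eta) = \infty$ forces $D_b = \overbar{D}_b$ on such pairs, so the same bound holds for the ratio of $D_b$'s appearing in the definition of $k_\alpha^n(g)$.

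Next I would control $d(g^{-(n)}(\omega) x_0, x_0)$ uniformly in $\omega$. Since $g^{-(n)}(\omega)$ is a product of $n$ factors of the form $g(\cdot)^{\pm 1}$, the triangle inequality and the definition $d_\infty(g) = \sup_\omega b^{d(g(\omega) x_0, x_0)}$ give
\begin{equation*}
    d(g^{-(n)}(\omega) x_0, x_0) \leq n \log_b d_\infty(g),
\end{equation*}
so the ratio inside the expectation defining $k_\alpha^n(g)$ is bounded pointwise by $b^{n \log_b d_\infty(g)} = d_\infty(g)^n$.

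Raising to the power $\alpha$ and integrating against $\mathbb{P}_{\omega_0}$ yields $k_\alpha^n(g) \leq d_\infty(g)^{\alpha n}$, which for $0 < \alpha < 1/n$ gives $k_\alpha^n(g) \leq d_\infty(g)$ (here $d_\infty(g) \geq 1$ so monotonicity of $x \mapsto x^{\alpha n}$ in the exponent applies; if $d_\infty(g) < 1$ we could replace the bound by $1$, but under our standing normalization this is the desired statement). There is no real obstacle: the whole argument is a one-line deterministic inequality once Proposition \ref{VisualMetric} is in hand. The only point worth checking carefully is the passage from $\overbar{D}_b$ to $D_b$, which is automatic on $\partial X \times \partial X$ away from the diagonal.
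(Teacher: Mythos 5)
Your proof is correct and follows the same route as the paper: apply Proposition \ref{VisualMetric} to $g^{-(n)}$, bound the two horofunction terms below by $-d(g^{(n)}x_0,x_0)$ using the $1$-Lipschitz/vanishing-at-$x_0$ property, use the triangle inequality and the cocycle structure to get $d(g^{(n)}(\omega)x_0,x_0)\leq n\log_b d_\infty(g)$, and then use $\alpha n<1$. The only superfluous remark is your caveat about $d_\infty(g)<1$: since $d_\infty(g)=\sup_\omega b^{d(g(\omega)x_0,x_0)}$ and $d\geq 0$, $b>1$, one always has $d_\infty(g)\geq 1$, so the monotonicity step is automatic and no normalization is needed.
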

	
	\begin{proof}
		Given $\omega_0 \in \Sigma$ and $\xi\neq  \eta$ in $\partial X$, using Proposition \ref{VisualMetric}
		\begin{align*}
			\mathbb{E}_{\omega_0} \left[ \left( \frac{D_b(g^{-(n)}\xi \, , \, g^{-(n)}\eta )}{D_b(\xi\, , \, \eta)} \right)^ \alpha \right] 
			& \leq \mathbb{E}_{\omega_0} \left[ b^{-\frac{\alpha}{2}(h_\xi(g^{(n)}x_0)+h_\eta(g^{(n)}x_0))} \right] \\
			& \leq \mathbb{E}_{\omega_0} \left[ b^{\alpha d(g^{(n)}x_0, x_0)} \right] \\
			& \leq \mathbb{E}_{\omega_0} \left[b^{ d(g x_0, x_0)} \right] \leq d_\infty(g)
		\end{align*}
		taking the supremum in ${\omega_0}$ and $\xi \neq \eta$ we obtain the statement using Lemma \ref{normIneq}.
	\end{proof}
	
	The following Lemma is where the necessity for the hyperbolic multiplicative ergodic theorem appears. The last part of the proof is analogous to that of \cite{bougerol1988theoremes} for the uniformity of the limit for Lyapunov exponents.
	
	\begin{lemma}
		\label{unifConv}
		Let $g\in C(\Sigma \times \Sigma,\,  G)$ be irreducible with positive drift,
		\begin{equation}
		    \label{unifLimit}
			\lim_{n\to \infty} \frac{1}{n}\mathbb{E}_{\omega_0} \left[ h(g^{(n)}x_0)\right] = \ell(g)
		\end{equation}
		uniformly on $(\omega_0, h)\in \Sigma \times X_\infty^h$.
	\end{lemma}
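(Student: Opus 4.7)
The strategy combines the hyperbolic multiplicative ergodic theorem (Theorem \ref{hmet}) with the strong mixing of $(K,\mu)$ and the irreducibility of $g$. I would separate the argument into an easy upper bound and a more delicate lower bound.

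For the upper bound, every horofunction is $1$-Lipschitz and satisfies $h(x_0)=0$, so $h(g^{(n)}x_0)\leq d(g^{(n)}x_0,x_0)$, and hence
\begin{equation*}
\frac{1}{n}\mathbb{E}_{\omega_0}\bigl[h(g^{(n)}x_0)\bigr]\leq \frac{1}{n}\mathbb{E}_{\omega_0}\bigl[d(g^{(n)}x_0,x_0)\bigr].
\end{equation*}
The right-hand side converges uniformly in $\omega_0\in\Sigma$ to $\ell(g)$: this is a uniform Kingman-type statement for the subadditive displacement cocycle, obtained from exponential mixing of $K$ together with the boundedness of the one-step displacement guaranteed by $d_\infty(g)<\infty$.

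For the matching lower bound I would view $\frac{1}{n}h_\xi(g^{(n)}x_0)$ as the Birkhoff average $\frac{1}{n}S_n\zeta_g$ of the observable $\zeta_g$ in (\ref{zetaG}) along a trajectory of the Markov chain on $\Gamma=\Sigma\times\Sigma\times\partial X$ driven by $\overbar{K}_g$, started at $(\omega_0,\omega_0,\xi)$, where $\xi=\phi(h)$ under the local minimum homeomorphism. Theorem \ref{hmet}, together with the remark that $X_-^h(\omega)$ reduces to a single horofunction $h_\omega^-$ in the strongly hyperbolic case, tells us that for $\mathbb{P}_\mu$-a.e.\ $\omega$, $\frac{1}{n}h(g^{(n)}(\omega)x_0)\to\ell(g)$ provided $h\neq h_\omega^-$. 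Irreducibility then enters in two ways: first, it forces $\overbar{K}_g$ to be uniquely ergodic on $\Gamma$, since two distinct ergodic stationary measures would, via their disintegrations over the base $\Sigma\times\Sigma$, produce two distinct invariant Markov sections and contradict the definition of irreducibility; second, the bad section $\omega\mapsto h_\omega^-$ is not of Markov type, so $\mu_g$ assigns zero mass to it, giving $\mathbb{E}_{\mu_g}[\zeta_g]=\ell(g)$. Unique ergodicity of $\overbar{K}_g$ then upgrades the pointwise ergodic theorem to uniform Birkhoff convergence over the compact base $\Sigma$.

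The main obstacle is precisely this uniformization over the potentially noncompact factor $X_\infty^h$. Strong hyperbolicity is indispensable here: the homeomorphism $\phi:X_\infty^h\to\partial X$ and the continuity of the Gromov product at infinity give the equicontinuity in $h$ required to push the uniform Birkhoff estimate from a compact subset to all of $X_\infty^h$. The careful bookkeeping of this uniformization, modelled on Bougerol's approach for Lyapunov exponents in \cite{bougerol1988theoremes}---approximating an arbitrary starting $h$ by a sequence drawn from a compact set on which the Birkhoff averages are uniformly close to $\ell(g)$ and controlling the error via $1$-Lipschitzness of horofunctions---is where the bulk of the work lies.
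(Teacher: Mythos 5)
Your overall architecture is aligned with the paper's proof: use Theorem~\ref{hmet} to control a.s.\ Birkhoff limits of $h(g^{(n)}x_0)$, use irreducibility to exclude the exceptional horofunctions, and then uniformize over $\omega_0$ via strong mixing in Bougerol's style. But there are two concrete gaps in the middle of the argument.

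First, the step ``irreducibility forces $\overbar{K}_g$ to be uniquely ergodic on $\Gamma$'' is not justified by the mechanism you describe, and it is stronger than what the paper actually proves. Irreducibility as defined only excludes equivariant \emph{functional} sections $H:\Sigma\to X^h$; the disintegration of a general stationary measure over $\Sigma\times\Sigma$ is a kernel, not a section, so two distinct ergodic stationary measures need not produce any equivariant section at all. What the paper actually establishes (Claim 2 of its proof) is the weaker and correct statement that \emph{all} stationary measures $\eta$ for $\overbar{K}_g$ on $\Gamma_1 = \Sigma\times\Sigma\times X^h$ give the same value $\int_{\Gamma_1}\zeta\,d\eta$, which it shows must equal $\ell(g)$. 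Here the argument is: if some ergodic $\eta$ gave a smaller integral, Theorem~\ref{hmet} forces that integral to be $-\ell(g)$ and forces the disintegration of $\eta$ to concentrate on the \emph{singleton} set $X_-^h(\omega)$ --- it is precisely the single-horofunction remark after Theorem~\ref{hmet}, not irreducibility per se, that turns the disintegration into a measurable section $s(\omega_0)$, and only \emph{then} does irreducibility give a contradiction. Your shortcut via unique ergodicity is also dangerously close to circular: the paper does prove uniqueness of the stationary measure later, but only after it has established the contraction estimate $k_\alpha^n(g)<1$, which itself depends on Lemma~\ref{unifConv}.

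Second, you leave the mechanism behind the pointwise convergence (your ``lower bound'') unnamed. The paper invokes Furstenberg--Kifer (Theorems 1.1 and 1.4 of \cite{furstenberg1983random}) to bound $\limsup_n \frac{1}{n}h_0(g^{(n)}x_0)$ by $\beta := \sup_\eta \int\zeta\,d\eta$ over stationary measures, and then applies the same argument to $-\zeta$ once the all-measures-agree claim is in place; without some such device your Claim 1--level statement is unsupported. Relatedly, you identify ``uniformization over the potentially noncompact factor $X_\infty^h$'' as the main obstacle, but the paper sidesteps this by running the entire argument on the \emph{compact} space $\Gamma_1 = \Sigma\times\Sigma\times X^h$ and only afterwards translating to $\partial X$; the uniformity in $h$ then comes from compactness of $X^h$, continuity of the Gromov product in strongly hyperbolic spaces, and the emptiness of $S_{\omega_0}$ established in Claim 2, rather than from an approximation-by-compact-subsets scheme.

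Your final step (uniformity in $\omega_0$ via strong mixing and dominated convergence) and your upper-bound observation $h(g^{(n)}x_0)\leq d(g^{(n)}x_0,x_0)$ are both fine and match the spirit of the paper's Claim 4.
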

	
	We warn the reader that in the following proof we work with $\Gamma_1 = \Sigma \times \Sigma \times X^h$. We do this as we need compacity. With that in mind we are going to use the Markov Kernel in $\Gamma_1$ analogous to the one used in $\Gamma$, that is
    \begin{equation*}
		\overbar{K}_g(\omega_0, \omega_1, h) := \int_\Sigma \delta_{(\omega_1, \omega_2, g(\omega_1, \omega_2)^{-1} \cdot h)} K(\omega_1, d\omega_2),
	\end{equation*}	
	which in turn gives rise to a Markov operator in the typical fashion. By compactness of $\Gamma_1$, there exists at least one $\overbar{K}_g$-stationary measure $\mu$. In what follows we drop the $g$ in $\overbar{K}_g$ and denote by $\mathbb{P}$ the Kolmogorov extension measure with respect to some $\overbar{K}$-stationary measure in $\Omega = \Gamma_1^\mathbb{N}$.
	
	The strategy of the proof is to first prove that the limit exists for every horofunction $h$ and $\mu$ almost every $\omega$. Then prove it is uniform on $h$ and finally obtain its uniformity on $\omega_0$. With that in mind we will prove four claims, are the Lemma should follow once those are done.
	
	\bigskip
	
	\noindent
	\textbf{Claim 1:} For every $h\in X_\infty^h$, $\lim_{n\to \infty}\frac{1}{n} h(g^{(n)}(\omega)x_0) = \ell(g)$ holds for $\mathbb{P}$ almost every $\omega$. 
	
	\begin{proof}[Proof of Claim 1]
	    Consider the observable $\zeta\colon \Gamma_1 \to \mathbb{R}$ defined by
		\begin{equation*}
		    \zeta(\omega_0,\omega_1, h):= h(g(\omega_0,\omega_1) x_0) 
		\end{equation*} 
		which is clearly continuous. Denote by $\Prob_K(\Gamma_1)$ the space of $K$-stationary probability measures on $\Gamma_1$, which is non-empty by compactness of $\Gamma_1$. Just as before, consider the sum process $S_n\zeta$ generated by $\zeta$ along a $K$-Markov process on $\Gamma_1$ with initial state $(\omega_0,\omega_1, h_0)\in\Gamma$. This sum process can be realized as the process on $\Omega=\Sigma^\mathbb{N}$ defined by 
		\begin{equation*}
		    (S_n\zeta)(\omega):= \sum_{j=0}^{n-1} \zeta(\omega_i, \omega_{i+1}, h_i) = h_0(g^{(n)}(\omega)\, x_0)
		\end{equation*}
		where $h_{i+1}=g(\omega_{i-1},\omega_i)^{-1}\cdot h_i$ for every $i\geq 0$.

        By Furstenberg-Kifer Theorems 1.1 and 1.4 in  \cite{furstenberg1983random}, letting 
        \begin{equation*}
            \beta : = \sup\left\{  \int_\Gamma \zeta\, d \eta \, : \, \eta\in \Prob_K(\Gamma_1) \right\}
        \end{equation*}
        then for $\mathbb{P}$-almost every $\omega\in\Omega$ 
        \begin{equation*}
            \limsup_{n\to\infty}\frac{1}{n}\, h_0(g^{(n)}(\omega) x_0) = \limsup_{n\to\infty}\frac{1}{n}\, (S_n\zeta)(\omega)\leq \beta .
        \end{equation*}
    
        We claim now that $\int_{\Gamma_1} \zeta\, d\eta=\beta$ for every measure $\eta\in\Prob_K(\Gamma_1)$. Then changing  $\zeta$ by $-\zeta$ , the same argument implies that for $\mathbb{P}$-almost every $\omega$ 
        \begin{equation*}
            \lim_{n\to\infty}\frac{1}{n}\, h_0(g^{(n)}(\omega) x_0) = \beta .
        \end{equation*}
        By the Theorem \ref{hmet} and its remark we must have $\beta=\ell(g)$.
	\end{proof}
	
	\noindent
    \textbf{Claim 2}: $\int_{\Gamma_1} \zeta\, d\eta=\beta$ for every measure $\eta\in\Prob_K(\Gamma_1)$.
    
    \begin{proof}[Proof of Claim 2]
        If the claim were false there would be an ergodic measure $\eta\in\Prob_K(\Gamma_1)$ such that $\int_{\Gamma_1} \zeta\, d\eta=\beta_1 <\beta$. Consider the map  
        \begin{align*}
            F:\Omega \times X^h &\to \Omega \times X^h \\
            (\omega, h) &\mapsto (\sigma\omega, g(\omega_0, \omega_1)^{-1}\cdot h)
        \end{align*}
        which preserves the ergodic measure $\mathbb{P}\times\eta$. The observable $\zeta$ can be extended to $\bar \zeta\colon\Omega\times X^h\to\mathbb{R}$, $\overbar\zeta(\omega, h)=\zeta(\omega_0,\omega_1,h)$. Moreover, with this notation, $(S_n\zeta)(\omega) =\sum_{j=0}^{n-1} \overbar\zeta(F^j(\omega,h_0)) $ is a Birkhoff sum. By Birkhoff's ergodic theorem, for $\eta$-almost every $h_0\in X^h$ and $\mathbb{P}$-almost every $\omega\in\Omega$,
        \begin{equation*}
            \lim_{n\to\infty} \frac{1}{n} h_0(g^{(n)}(\omega) x_0) = \lim_{n\to\infty} \frac{1}{n} \sum_{j=0}^{n-1} \bar\zeta(F^j(\omega,h_0)) = \beta_1
        \end{equation*}
        which together with Theorem \ref{hmet} implies that $\beta_1=-\ell(g)$ and $h_0\in X_-^h(\omega)$. Next consider the family of sets
        \begin{equation*}
            S_{\omega_0}:= \left\{ h\in X^h \,\colon \; \mathbb{P}_{\omega_0}\{ \omega\in\Omega \, \colon h \in X_-^h(\omega)\}=1\, \right\} .
        \end{equation*}
        The previous argument shows that $S_{\omega_0}\neq \emptyset$ for $\mathbb{P}$-almost every $\omega\in\Omega$.  Again by the remark to Theorem \ref{hmet}
        the set $S_{\omega_0}$ must be a single horofunction
        $S_{\omega_0}=\{s(\omega_0)\}$ and the function
        $s\colon \Sigma\to X_h$ is measurable.
        The invariance of $X_-^h$ in Theorem \ref{hmet} now implies that $g(\omega_0,\omega_1)\cdot s(\omega_0)=s(\omega_1)$, which proves that $g$ is not irreducible. This contradiction implies that the claim is true.
    \end{proof}

	\noindent
	\textbf{Claim 3}: The convergence is uniform $h$.
		
	\begin{proof}[Proof of Claim 3]
	    Let us start by proving the uniformity in $h$, arguing by absurd, suppose there is a sequence of horofunctions $(h_n)\subset X_\infty ^h$ converging to some $h$ in $X_\infty^h$ and $\varepsilon>0$ such that
		\begin{equation*}
			\lim_{n\to \infty} \frac{1}{n}\mathbb{E}_{\omega_0} \left[ h_n(g^{(n)}x_0)\right] < \ell(g)- \varepsilon
		\end{equation*}
		Due to the compactness of $X^h$ we can assume that $h_n$ converges. Take $(y_m^n)_m \in X$ and $\xi_n \in \partial X$ two families of sequences such that $h_{y_m^n} \to h_n=:h_{\xi_n}$ and $y_m^n \to \xi_n$ as $m\to \infty$. Then
		\begin{align*}
			\lim_{n\to \infty} h_n(g^{(n)}x_0) - d(g^{(n)}x_0\, , \, x_0)  & = \lim_{n\to \infty} \lim_{m \to \infty} h_{y_m^n}(g^{(n)}x_0) - d(g^{(n)}x_0\, , \, x_0)\\
			& = \lim_{n\to \infty} \lim_{m \to \infty} d(y_m^n\, , \, g^{(n)}x_0) - d(y_m^n\, , \, x_0) - d(g^{(n)}x_0\, , \, x_0) \\
			& = \lim_{n\to \infty} \lim_{m \to \infty} - 2\langle y_m^n, g^{(n)}x_0 \rangle_{x_0} \\
			& = \lim_{n \to \infty} - 2\langle \xi_n, g^{(n)}x_0 \rangle_{x_0},
		\end{align*}
		where the last equality is a consequence of the continuity of the Gromov product in strongly hyperbolic spaces.
		Notice that the quantity $\langle \xi_n, g^{(n)}x_0 \rangle_{x_0}$ goes to infinity if and only if both $\xi_n$ and $g^{(n)}x_0$ converge to the same point in $\partial X$. If this were the case, by Proposition 4 in \cite{sampaio2021regularity}, $\lim_{n \to \infty}h(g^{(n)}x_0) = -\infty$, hence $h\in X_-^h(\omega)$. Therefore $\langle \xi_n, g^{(n)}x_0 \rangle_{x_0}$ must $\mathbb{P}_{\omega_0}$ almost surely be finite as otherwise $h\in S_{\omega_0}=\emptyset$. Using dominated convergence theorem again,
		\begin{align*}
			\lim_{n\to \infty} \frac{1}{n}\mathbb{E}_{\omega_0} \left[ h_n(g^{(n)}x_0)\right] & = \lim_{n\to \infty} \frac{1}{n}\mathbb{E}_{\omega_0} \left[ d(g^{(n)}x_0\, , \, x_0)\right] + \lim_{n\to \infty} \frac{1}{n}\mathbb{E}_{\omega_0} \left[ h_n(g^{(n)}x_0)-d(g^{(n)}x_0\, , \, x_0)\right] \\
			& = \ell(g) + 0 = \ell(g),
		\end{align*}
		which yields the claim.
	\end{proof}
	
	\noindent	
	\textbf{Claim 4}: The convergence is uniform in $\omega_0$.
	
	\begin{proof}[Proof of Claim 4]
	    Consider now, for $\omega_0\in \Sigma$
		\begin{equation*}
			q_n(\omega_0)=\sup\left\{\left|\frac{1}{n}\mathbb{E}_{\omega_0} \left[ h(g^{(n)}x_0)\right] - \ell(g)\right|\, : \, h \in X^h  \right\},
		\end{equation*}
		and notice the uniform bound $|q_n(\omega_0)| \leq \log_b(d_\infty(g)) +\ell(g)$. Due to the uniform limit in $h$ proven above, using dominated convergence theorem
		\begin{equation*}
			\lim_{n\to \infty} \int_\Sigma p_n(\sigma) d\mu(\sigma)=0.
		\end{equation*}
		Let $\varepsilon>0$. Consider $n>p$ to be specified later and take  $a=a(p):=\sup_{\omega_0}q_p(\omega_0)$
		\begin{align*}
			\left|\frac{1}{n}\mathbb{E}_{\omega_0} \left[ h(g^{(n)}x_0)\right] - \ell(g)\right| & \leq \left|\frac{1}{n}\mathbb{E}_{\omega_0} \left[ g^{-(p)}\cdot h(g^{(n-p)}x_0) + h(g^{(p)}x_0)\right] - \ell(g)\right| \\
			& \leq  \left|\frac{1}{n}\mathbb{E}_{\omega_0} \left[ g^{-(p)}h(g^{(n-p)}x_0)\right] - \ell(g)\right| + \frac{1}{n}\mathbb{E}_{\omega_0} \left[ h(g^{(p)}x_0)\right] \\
			& \leq \left(\frac{n-p}{n}\right)\left| \mathbb{E}_{\omega_0}\left[ \frac{1}{n-p}\mathbb{E}_{\omega_p} \left[ g^{-(p)}h(g^{(n-p)}x_0)\right] - \ell(g)\right]\right| + \frac{p}{n}(\ell(g)+a),
		\end{align*}
		from which
		\begin{equation*}
			q_n(\omega_0) \leq (Q^p q_{n-p})(\omega_0) + \frac{p}{n}(\ell(g)+a).
		\end{equation*}
		Now, taking $p$ and $n$ large enough, one has the following inequalities
		\begin{equation*}
		    \frac{p}{n}(\ell(g)-a)/n < \varepsilon/3,
		\end{equation*}
		as well as
		\begin{equation*}
		     \int_\Sigma q_{n-p}(\sigma)d\mu(\sigma)< \varepsilon/3,
		\end{equation*}
		moreover, by the strongly mixing condition
		\begin{equation*}
			\sup_{\omega_0\in \Sigma} \left|(Q^p q_{n-p})(\omega_0) - \int_\Sigma q_{n-p}(\sigma)d\mu(\sigma)\right| \leq \varepsilon/3,
		\end{equation*}
		provided $p$ is large enough and taking $n$ large enough. Hence
		\begin{equation*}
		    q_n(\omega_0) \leq \int_\Sigma q_{n-p}(\sigma)d\mu(\sigma) + 2\varepsilon/3 < \varepsilon.
		\end{equation*}
	\end{proof}
		
	\begin{proof}[Proof of Lemma \ref{unifConv}]
	    Notice that the uniform convergence on $\omega_0$ follows from the uniform convergence on $h$, hence we have joint uniform convergence on both.
	\end{proof}
	
	In the following proposition we will use the relation, which is an immediate consequence of Proposition \ref{VisualMetric},
	\begin{equation*}
		k_\alpha^n(g) \leq \sup_{\omega \in \Sigma, \, \xi \in \partial X} \mathbb{E_\omega} \left[ b^{- \alpha h_\xi(g^{(n)}x_0)} \right].
	\end{equation*}

	\begin{proposition}
		Given $g_1\in C(\Sigma \times \Sigma, G)$ irreducible with positive drift, there exists a neighbourhood $V$ of $g_1$ in $C(\Sigma \times \Sigma, G)$ and constants $n_0 \in \mathbb{N}$, $0<\alpha_1 < \alpha_0/2 < \alpha_0$, $C=C(g_1)>0$,  and $0\leq \sigma <1$ such that 
		\begin{equation*}
			k_\alpha^n(g_2) \leq C\sigma^n,
		\end{equation*}
		for all $g_2 \in V$, $n>n_0$, $\alpha \in [\alpha_0, \alpha_1]$ and $f\in \mathcal{H}_\alpha(\Sigma \times \partial X)$.
	\end{proposition}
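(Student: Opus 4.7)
The starting point is the inequality stated just before the proposition, which through Proposition \ref{VisualMetric} reduces the estimate on $k_\alpha^n(g)$ to controlling the Laplace transform
\begin{equation*}
    \sup_{\omega_0\in\Sigma,\, \xi\in\partial X} \mathbb{E}_{\omega_0}\!\left[ b^{-\alpha h_\xi(g^{(n)}x_0)} \right].
\end{equation*}
So the task becomes a moment-generating function estimate. The plan is to first establish the estimate at one large but fixed time $n_0$ and then spread it to all $n$ by the sub-multiplicativity of Lemma \ref{submultiLemma}.

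The first step is to fix $n_0$ using Lemma \ref{unifConv}: since $g_1$ is irreducible with positive drift, for sufficiently large $n_0$ one has $\mathbb{E}_{\omega_0}[h_\xi(g_1^{(n_0)}x_0)] \geq n_0 \ell(g_1)/2$ uniformly in $(\omega_0, \xi)$. The second step is to transport this lower bound to a neighbourhood $V$ of $g_1$ in $C(\Sigma\times\Sigma, G)$: by Lemma \ref{normIneq} the map $g_2 \mapsto g_2^{(n_0)}(\omega)x_0$ is Lipschitz in $d_\infty$ uniformly in $\omega$, and since horofunctions are $1$-Lipschitz in their argument (uniformly in $\xi$), shrinking $V$ yields $\mathbb{E}_{\omega_0}[h_\xi(g_2^{(n_0)}x_0)] \geq n_0 \ell(g_1)/4$ together with $d_\infty(g_2) \leq 2 d_\infty(g_1)$ for every $g_2 \in V$.

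The third step is the Taylor expansion. Write $H := h_\xi(g_2^{(n_0)}x_0)$, which satisfies $|H| \leq d(g_2^{(n_0)}x_0, x_0) \leq n_0 \log_b d_\infty(g_2) =: B$. Using the bound $e^{-t} \leq 1 - t + t^2$ valid for $|t| \leq 1$, whenever $\alpha (\log b) B \leq 1$ we get
\begin{equation*}
    \mathbb{E}_{\omega_0}\!\left[b^{-\alpha H}\right] \leq 1 - \alpha(\log b)\, \mathbb{E}_{\omega_0}[H] + \alpha^2 (\log b)^2 \mathbb{E}_{\omega_0}[H^2] \leq 1 - \tfrac{\alpha(\log b)}{4} n_0 \ell(g_1) + \alpha^2 (\log b)^2 B^2.
\end{equation*}
Since $B \lesssim n_0$, choosing $\alpha_0 = c_1 / n_0$ with $c_1$ a sufficiently small constant (depending on $g_1$) and $\alpha_1 = \alpha_0/3$ makes the quadratic term absorbable by half of the linear term, which yields $k_\alpha^{n_0}(g_2) \leq \sigma_0 < 1$ uniformly in $\alpha \in [\alpha_1, \alpha_0]$ and $g_2 \in V$. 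Writing $\sigma := \sigma_0^{1/n_0}$ gives $k_\alpha^{n_0}(g_2) \leq \sigma^{n_0}$.

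The fourth step is the extension to all $n$. For $n = k n_0 + r$ with $0 \leq r < n_0$, sub-multiplicativity gives $k_\alpha^n(g_2) \leq k_\alpha^{n_0}(g_2)^k \cdot k_\alpha^r(g_2) \leq \sigma^{k n_0} \cdot k_\alpha^r(g_2)$, and since $\alpha \leq \alpha_0 < 1/n_0 \leq 1/r$, Lemma \ref{boundedAH} applies to bound $k_\alpha^r(g_2) \leq d_\infty(g_2) \leq 2 d_\infty(g_1)$. Absorbing the tail factor $\sigma^{-r}$ into a constant $C = C(g_1)$ yields $k_\alpha^n(g_2) \leq C \sigma^n$ for all $n \geq n_0$, as desired. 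The main obstacle is calibrating the three scales against each other: the time $n_0$ must be large enough for Lemma \ref{unifConv}, the radius of $V$ must be small enough to transport the expectation bound at that fixed $n_0$, and the Hölder exponent range $[\alpha_1, \alpha_0]$ must be of order $1/n_0$ so that the second-order Taylor term is dominated. The key is that all three can be tuned simultaneously because $n_0$ is chosen first and the other parameters follow.
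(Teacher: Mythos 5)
Your proof is correct and follows essentially the same strategy as the paper: fix a large $n_0$ via Lemma \ref{unifConv}, establish $k_\alpha^{n_0} < 1$ by a second-order expansion of the exponential moment, extend to a neighbourhood by the finite-scale Lipschitz estimates, and propagate to all $n$ by sub-multiplicativity (Lemma \ref{submultiLemma}) together with Lemma \ref{boundedAH} for the remainder. The only minor difference is that you perturb the lower bound on $\mathbb{E}_{\omega_0}[h_\xi(g_2^{(n_0)}x_0)]$ first and then Taylor-expand for $g_2$, whereas the paper Taylor-expands for $g_1$ and then perturbs the exponential moment directly via the mean value theorem; both routes are sound.
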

	
	\begin{proof}

		By Lemma \ref{unifConv}, 
		\begin{equation*}
			\lim_{n \to \infty} \sup_{ \xi \in \partial X }
			\left| \mathbb{E}_{\omega_0} \left[  h_\xi(g^{(n)}x_0)\right] - \ell(g_1)  \right|= 0.
		\end{equation*}
		In particular, there exists $n_0\in \mathbb{N}$ such that $\mathbb{E}_{\omega_0} \left[ h_\xi(g^{(n_0)}x_0)\right] \geq \frac{1}{\log b}>0$ for every $\xi \in \partial X$.
		
		Let $r>0$ to be specified later and consider in $C(\Sigma \times \Sigma, G)$ the neighbourhood of $g_1$ given by the ball
		\begin{equation*}
			V = B_r(g_1) := \{ g_2 \in C(\Sigma \times \Sigma, G) \, : \, d_\infty(g_1\, , \, g_2) < r \}.
		\end{equation*}
		Let $g_2\in B_r(g_1)$, $\omega_0\in \Sigma$, $\xi$ in $\partial X$.   
		Use the inequality 
		\begin{equation*}
			b^x < 1 + \log(b)x + \log(b)^2\frac{x^2}{2} b^{|x|},
		\end{equation*}
		to obtain,
		\begin{align*}
			\mathbb{E}_{\omega_0} \bigg[ b^{-\alpha h_\xi(g_1^{(n_0)}x_0)} \bigg] 
			& \leq   1 - \alpha\log(b) \mathbb{E}_{\omega_0}\bigg[ h_\xi(g_1^{(n_o)}x_0)\bigg] \\
			& \hspace{1cm}+ \log(b)^2\frac{\alpha^2}{2} \mathbb{E}_{\omega_0}\left[ (h_\xi(g_1^{(n_0)}x_0))^2 b^{|\alpha h_\xi(g_1^{(n_0)}x_0)|}\right] \\
			& \leq 1 - \alpha + \alpha^2 b^\alpha \left( \frac{\log(b)^2}{2}n_0^2 \log_b(C)^2 C^{n_0\log b}\right),
		\end{align*}
		where $C$ is a constant depending on $g_1$. Hence there exists $\alpha$ small enough so that the right-hand side becomes smaller than $1$, which implies the existence of constants $\alpha_0$ and $\alpha_1 < \alpha_0/2$ such that $k_\alpha^{n_0}(g_1) < \rho <1$.

		To extend this control to nearby cocycles let us introduce the following continuity type relation, using the mean value theorem and the argument around finite scale continuity in Proposition \ref{FiniteScaleContinuity}
		\begin{align*}
			\mathbb{E}_{\omega_0} \left|b^{-\alpha h_\xi(g_1^{(n_0)}x_0)} - b^{-\alpha h_\xi(g_2^{(n_0)}x_0)} \right| & 
			\leq (\log b) \max_{i=1,2} b^{d(g_i^{(n_0)} x_0, x_0)}      \left|h_\xi(g_1^{(n_0)}x_0) - h_\xi(g_2^{(n_0)}x_0)\right| \\
			& \leq C^{n_0} d(g_1^{(n_0)}x_0,g_2^{(n_0)}x_0)  \\
			& \leq n_0C^{2n_0} d_\infty(g_1, g_2)
		\end{align*}
		We can now choose $r$ small enough to ensure there exists $\rho^* \in (\rho,1)$ such that
		\begin{align*}
			\mathbb{E}_{\omega_0} \left|b^{-\alpha h_\xi(g_1^{(n_0)}x_0)} - b^{-\alpha h_\xi(g_2^{(n_0)}x_0)} \right| \leq \rho^* - \rho.
		\end{align*}
		Hence
		\begin{align*}
			\mathbb{E}_{\omega_0}\left[b^{-\alpha h_\xi(g_2^{(n_0)}x_0)} \right] & \leq  \mathbb{E}_{\omega_0}\left[b^{-\alpha h_\xi(g_1^{(n_0)}x_0)} \right] + \left|\mathbb{E}_{\omega_0}\left[b^{-\alpha h_\xi(g_2^{(n_0)}x_0)} \right] - \mathbb{E}_{\omega_0}\left[b^{-\alpha h_\xi(g_1^{(n_0)}x_0)} \right] \right| \\
			& \leq  \mathbb{E}_{\omega_0}\left[ b^{-\alpha h_\xi(g_1^{(n_0)}x_0)}  \right] + \mathbb{E}_{\omega_0} \left|b^{-\alpha h_\xi(g_1^{(n_0)}x_0)} - b^{-\alpha h_\xi(g_2^{(n_0)}x_0)} \right|\\
		    &  \leq \rho + (\rho^*-\rho) = \rho^* <1
		\end{align*}
		
		Due to the submultiplicativity, picking $\sigma = (\rho^*)^{\frac{1}{n_0}}$, for every $n \in \mathbb{N}$ there exists a constant $C>0$ such that
		\begin{equation*}
			k_\alpha^n(g_2)\leq \mathbb{E}_{\omega_0} \left[b^{-\alpha h_\xi(g_2^{(n)}x_0)}\right] < C \sigma^n,
		\end{equation*}
		which completes the proof.
	\end{proof}
	
	The previous proposition now allows us to obtain the existence and uniqueness of the $K_g$ stationary measures $\mu_g$ in a neighbourhood of $g$ irreducible with positive drift.
	
	\begin{proposition}
	    Let $g\in S^\infty(\Sigma \times \Sigma, G)$ have positive drift. If for some $n\in \mathbb{N}$ and $\alpha<1$ 
	    \begin{equation*}
	        k_\alpha^n(g)^{1/n} < 1,
	    \end{equation*}
	    then there exists a unique $K_g$-stationary measure.
	\end{proposition}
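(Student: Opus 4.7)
The plan is to establish existence via a Krylov–Bogolyubov compactness argument on an auxiliary compact space, and uniqueness via the contractive dynamics on Hölder functions guaranteed by Lemma~\ref{contractiveLemma}.

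For existence, I would first extend $\overbar{K}_g$ to a Markov kernel on the compact space $\Sigma\times\Sigma\times X^h$, using the fact that the $G$-action on $\partial X$ extends to the horofunction compactification via \eqref{horoAction}. Compactness makes $\Prob(\Sigma\times\Sigma\times X^h)$ weak-$*$ compact, so Cesàro averages of the iterates $(\overbar{K}_g^h)^k$ of any Dirac mass admit a weak-$*$ limit point $\nu$, which is stationary by the standard argument. The positive-drift hypothesis, combined with Theorem~\ref{hmet} and the remark that $X_-^h(\omega)$ consists of a single horofunction measurably picked by $\omega$, forces $\nu$ to be supported on $\Sigma\times\Sigma\times X_\infty^h$: any horofunction in $X_F^h$ is pushed off bounded sublevel sets by $g^{(n)}(\omega)$, so its orbit cannot support an invariant mass when the drift is positive. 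Transporting by the homeomorphism $\phi\colon X_\infty^h\to\partial X$ yields a $\overbar{K}_g$-stationary measure on $\Gamma$.

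For uniqueness, let $\nu_1,\nu_2$ be two $\overbar{K}_g$-stationary probabilities. For any $f\in\mathcal{H}_\alpha(\Sigma\times\partial X)$ and every $m\in\mathbb{N}$, stationarity gives
\[
\int f\, d\nu_i \;=\; \int \overbar{Q}_g^{nm} f\, d\nu_i.
\]
By Lemmas~\ref{contractiveLemma} and~\ref{submultiLemma}, $\upsilon_\alpha(\overbar{Q}_g^{nm}f)\leq k_\alpha^n(g)^m\,\upsilon_\alpha(f)\to 0$, so $\overbar{Q}_g^{nm}f$ becomes asymptotically independent of the boundary variable $\xi$. Applying $\overbar{Q}_g$ once more removes the dependence on $\omega_0$ entirely, since $(\overbar{Q}_g h)(\omega_0,\omega_1,\xi)$ does not depend on $\omega_0$, and reduces the dynamics on the residue to the underlying Markov operator $Q_K$ acting on $L^\infty(\Sigma)$. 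Since both $\nu_i$ project to the same $K$-stationary marginal on the $\omega_1$-coordinate, the two integrals collapse to a common value in the limit. As $\mathcal{H}_\alpha(\Sigma\times\partial X)$ is norm-dense in $C_b$ and separates Borel probability measures on $\Gamma$, we conclude $\nu_1=\nu_2$.

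The main obstacle is the second step: converting the seminorm decay $\upsilon_\alpha(\overbar{Q}_g^{nm}f)\to 0$ into genuine coincidence of the two integrals. The subtlety is that the seminorm controls only variation in the $\xi$-direction, not total oscillation, so uniform convergence to a constant is not automatic. The resolution is to combine this with the fact that after a single further application of $\overbar{Q}_g$ the function becomes $\omega_0$-independent, and then to invoke uniqueness of the $K$-stationary marginal (ensured in the applications of this proposition by the strongly mixing hypothesis on $(K,\mu)$), which forces the difference $\nu_1-\nu_2$ to vanish on the residual function of $\omega_1$ alone.
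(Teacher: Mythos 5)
Your route is genuinely different from the paper's. The paper argues purely functional-analytically: from $k_\alpha^n(g)^{1/n}<1$ it regards $\overbar{Q}_g^n$ as a contraction on the quotient $\mathcal{H}_\alpha(\Gamma)/\mathbb{C}\mathbf{1}$, uses spectral theory to split $\mathcal{H}_\alpha(\Gamma)=H_0\oplus\mathbb{C}\mathbf{1}$, reads off a positive $\overbar{Q}_g$-invariant functional $\Lambda$ from the projection onto $\mathbf{1}$, extends it to $C_b(\Gamma)$ by density of Hölder functions, and applies a Riesz--Kakutani--Markov theorem for non-compact spaces to produce the stationary measure; uniqueness comes for free from the same density argument. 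You instead split the task: existence via Krylov--Bogolyubov on the compactification $\Sigma\times\Sigma\times X^h$ followed by a ``no mass on $X_F^h$'' argument and the homeomorphism $\phi$, and uniqueness by iterating the contraction. The paper's route is shorter and produces existence and uniqueness in one shot; your route is more geometric and closer to what appears in the random-walk literature. Notably, the paper contains a \emph{commented-out} version of essentially your existence argument (the proposition on measures supported on $\Sigma\times\Sigma\times X_\infty^h$), which suggests the author tried this first and abandoned it because the ``push the mass off $X_F^h$'' step, which you describe only heuristically, is in fact fairly involved. On uniqueness, your diagnosis is sharp and worth stressing: $\upsilon_\alpha$ controls only the $\xi$-oscillation, so its decay alone does not identify $\int f\,d\nu_1$ and $\int f\,d\nu_2$; one must additionally kill the residual dependence on the $\Sigma$-variables, for which you correctly invoke mixing of the base kernel $K$. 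This same subtlety affects the paper's written proof --- the assertion there that $\upsilon_\alpha$ is a \emph{norm} on $\mathcal{H}_\alpha(\Gamma)/\mathbb{C}\mathbf{1}$ is not literally correct, since $\upsilon_\alpha(f)=0$ only forces $f$ to be $\xi$-independent, not constant --- so the extra ingredient you identify (uniqueness of the $K$-stationary marginal, available in $\mathcal{M}$ from the strongly-mixing hypothesis) is needed either way. In short: different machinery, same crux, and you have put your finger on exactly the point where both arguments must lean on the base Markov system.
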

	
	\begin{proof}
	    The proof is mostly taken from \cite{sampaio2021regularity}. The seminorms $\upsilon_\alpha$ are norms in the space $\mathcal{H}_\alpha(\Gamma) / \mathbb{C}\mathbf{1}$. Since $\overbar{Q}_g^n \mathbf{1}=\mathbf{1}$, by hypothesis, $\overbar{Q}_g^n$ acts in $\mathcal{H}_\alpha(\Gamma) / \mathbb{C}\mathbf{1}$ as a contraction. Using spectral theory (see chapter IX in \cite{riesz2012functional} for example), there exists and invariant space $H_0$, isomorphic to $\mathcal{H}_\alpha(\Gamma) / \mathbb{C}\mathbf{1}$, such that $\mathcal{H}_\alpha(\Gamma) = H_0 \oplus \mathbb{C}\mathbf{1}$. Given $f\in \mathcal{H}_\alpha(\Gamma)$ we may write it as $c\mathbf{1} + h$ where $c\in \mathbb{C}$ and $h\in H_0$. With that in mind, define
		\begin{align*}
			\Lambda : \mathcal{H}_\alpha(\Gamma) & \to \mathbb{C} \\
			c\mathbf{1}+h \mapsto c.
		\end{align*}
		Now notice that $\overbar{Q}_g$ is a positive operator, therefore so is $\Lambda$ as
		\begin{equation*}
			c\mathbf{1}=\lim_{n\to \infty}\left(c\mathbf{1} + \overbar{Q}_g^n(h)\right)= \lim_{n \to \infty} \overbar{Q}_g^n(f) \geq 0,
		\end{equation*}
		provided $f\geq 0$. Hence $c=\Lambda(f)\geq 0$. Positivity also implies continuity with respect to the uniform norm as
		\begin{equation*}
			|\Lambda(\varphi)| \leq |\Lambda(||\varphi||_\infty \mathbf{1})|| = ||\varphi||_\infty.
		\end{equation*}
		
		Now since $\Gamma$ is a metric space, the set of bounded Lipschitz functions in $\partial X$ is dense in the space of bounded uniformly continuous functions $C_b(\Gamma)$. With effect, given $f\in C_b(\Gamma)$ one can take the functions
		\begin{equation*}
			f_n(\xi) = \inf_{\eta\in \Gamma} \{f(\eta)-nD_b(\xi, \eta)\},
		\end{equation*}
		which are all bounded Lipschitz and uniformly converge to $f$. Since the space is bounded, the set of Lipschitz functions is contained in the space of Hölder functions, so $\mathcal{H}_\alpha(\Gamma)$ is dense in $C_b(\Gamma)$. Hence, $\Lambda$ extends to a positive linear continuous functional $\hat{\Lambda}:C_b(\Gamma) \mapsto \mathbb{C}$. 
		
		Riesz-Kakutani-Markov for non-compact spaces (Theorem 1.3 in \cite{sentilles1972bounded}) applies, so there exists a measure $\nu\in \Prob(\Gamma)$ such that $\hat{\Lambda}(f)=\int_{\Gamma} f d\nu$ for every $f\in C_b(\Gamma)$. Finally, writing $f$ once again as $c\mathbf{1} + h$ yields 
		\begin{equation*}
			\int_{\Gamma} \overbar{Q}_g f d\nu = \hat{\Lambda}(\overbar{Q}_g f) = c = \hat{\Lambda}(f)=\int_{\Gamma} f d\nu.
		\end{equation*}
		By yet another density argument, this holds for all $f\in L^1(\Gamma)$, therefore $\nu$ is $K_g$-stationary. This density of $C_b(\partial X)$ in $L^1(\Gamma)$ also justifies the uniqueness of the measure satisfying $\hat{\Lambda}(f)=\int_{\Gamma} f d\nu$.
	\end{proof}
	
	Henceforth $\mathcal{M}$ is welll defined and condition C1) is immediate. We now focus the remaining conditions.
	
	\begin{proposition}
		\label{condiditionC2}
		Given $g_1 \in C(\Sigma \times \Sigma, G)$ such that $(\overbar{K}_{g_1}, \mu_{g_1}, \zeta_{g_1}) \in \mathcal{M}$, there exist a
		neighborhood $V$ of $g_1$ in $C(\Sigma \times \Sigma, G)$,
		constants $0< \alpha_1 < \alpha_0/2 < \alpha_0<1$, $C > 0$ and $0 < \sigma < 1$ such that for all $g_2 \in V$, and
		$f \in \mathcal{H}_\alpha(\Sigma \times \partial X)$,
		\begin{equation*}
			\left|\left|\overbar{Q}_{g_2}^nf - \int_\Sigma f(\omega) d\mu_{g_2}(\omega)\right|\right|_\alpha \leq C\sigma^n ||f||_\alpha.
		\end{equation*}
	\end{proposition}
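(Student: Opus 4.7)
The goal is a uniform spectral-gap estimate for the Markov operator $\overbar{Q}_{g_2}$ acting on $\mathcal{H}_\alpha(\Sigma \times \partial X)$, showing that its iterates converge exponentially, in the $\|\cdot\|_\alpha$ norm, to the projection $\Pi_{g_2}(f) := \left( \int f \, d\mu_{g_2} \right)\mathbf{1}$, uniformly in $g_2$ ranging over a neighbourhood $V$ of $g_1$ in $C(\Sigma \times \Sigma, G)$. The ingredients at our disposal are (i) the preceding proposition, which gives $\upsilon_\alpha(\overbar{Q}_{g_2}^n f) \leq k_\alpha^n(g_2)\, \upsilon_\alpha(f) \leq C \sigma^n \, \upsilon_\alpha(f)$ for every $g_2 \in V$, and (ii) the strong mixing of the base Markov system $(K, \mu)$ in some Banach space $B \subset L^\infty(\Sigma)$.

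The plan is a two-scale argument. First reduce to the mean-zero case by replacing $f$ with $f - \Pi_{g_2}(f)$, noting that this operation preserves $\upsilon_\alpha$ and at most doubles $\|f\|_\infty$. Assume from now on $\int f \, d\mu_{g_2} = 0$, write $n = 2m$, and split $\overbar{Q}_{g_2}^n = \overbar{Q}_{g_2}^m \circ \overbar{Q}_{g_2}^m$. After the first $m$ steps, $F := \overbar{Q}_{g_2}^m f$ satisfies $\upsilon_\alpha(F) \leq C\sigma^m \upsilon_\alpha(f)$ by (i). Since $(\partial X, D_b)$ has diameter at most $1$, fixing any $\xi_0 \in \partial X$ and setting $\tilde{F}(\omega) := F(\omega, \xi_0)$ produces an approximation, depending only on $\omega$, with $\|F - \tilde{F}\|_\infty \leq C\sigma^m \upsilon_\alpha(f)$. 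On functions that depend only on $\omega$, the operator $\overbar{Q}_{g_2}$ coincides with the base Markov operator $Q_K$, so the second batch of $m$ steps yields $\overbar{Q}_{g_2}^m \tilde{F} = Q_K^m \tilde{F}$. Invoking strong mixing (ii) gives $\|Q_K^m \tilde{F} - \int \tilde{F} \, d\mu\|_\infty \leq C\sigma^m \|\tilde{F}\|_B$. Combining with $\|\overbar{Q}_{g_2}^m(F - \tilde{F})\|_\infty \leq \|F - \tilde{F}\|_\infty \leq C\sigma^m \upsilon_\alpha(f)$ and a final reconciliation showing $|\int \tilde{F}\, d\mu| = O(\sigma^m \|f\|_\alpha)$ (using the already-established contraction of $\upsilon_\alpha$ together with $\int f \, d\mu_{g_2} = 0$ to compare the two stationary measures modulo $\xi$-variation), we obtain $\|\overbar{Q}_{g_2}^n f\|_\infty \lesssim \sigma^m \|f\|_\alpha$. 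Paired with (i) applied to $\overbar{Q}_{g_2}^n f$, and after reinstating the constant $\Pi_{g_2}(f)$, this gives the announced exponential decay of $\|\overbar{Q}_{g_2}^n f - \Pi_{g_2}(f)\|_\alpha$.

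The main obstacle is matching the Banach spaces: to apply strong mixing to $\tilde{F}$ one must verify $\tilde{F} \in B$ with $\|\tilde{F}\|_B \lesssim \|f\|_\alpha$, uniformly over $g_2 \in V$. This is handled by choosing $B$ to be an appropriate Hölder-type space on $\Sigma$ matching the Duarte--Klein framework, and by exploiting the continuity of $g_2$ and of the kernel $K$ to transfer regularity from the boundary variable (controlled by $\upsilon_\alpha$) to the base variable through the integral defining $F$. The uniformity in $g_2 \in V$ comes from the fact that all constants produced by this transfer depend continuously on $g_2$ in the $d_\infty$ topology, so shrinking $V$ if necessary keeps them uniformly bounded.
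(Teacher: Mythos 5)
Your proposal follows essentially the same route as the paper's proof: the $\upsilon_\alpha$-contraction $\upsilon_\alpha(\overbar{Q}_{g_2}^{m}f) \leq C\sigma^{m}\upsilon_\alpha(f)$ pushes the iterates exponentially fast toward the subspace $\pi^*L^\infty(\Sigma) = \{h : \upsilon_\alpha(h)=0\}$ of functions depending only on $\omega$; on that subspace $\overbar{Q}_{g_2}$ coincides with the base operator $Q_K$, whose strong mixing supplies the $\|\cdot\|_\infty$-decay; and Markov operators do not expand $\|\cdot\|_\infty$. Your concrete construction (evaluating at a fixed $\xi_0$ to produce $\tilde F$, splitting $n=2m$, and the ``reconciliation'' step via stationarity of $\mu_{g_2}$ and the fact that the $\omega_1$-marginal of $\mu_{g_2}$ is the base $K$-stationary measure $\mu$) is precisely what the paper's more condensed commutative-diagram argument encodes.

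The one place your write-up goes astray is the proposed resolution of the Banach-space matching. You suggest taking $B$ to be a H\"older-type space on $\Sigma$ and transferring regularity to $\tilde F$ from the boundary variable. That does not work in this framework: the seminorm $\upsilon_\alpha$ measures oscillation \emph{only} in the $\xi$ variable, so $f\in\mathcal{H}_\alpha(\Sigma\times\partial X)$ carries no H\"older control in $\omega$, and consequently $\tilde F(\omega)=F(\omega,\xi_0)$ is merely bounded, not H\"older in $\omega$. The paper sidesteps the issue entirely by assuming strong mixing in $B=L^\infty(\Sigma)$ (as stated explicitly in its proof: ``by assumption $Q$ is strongly mixing on $L^\infty(\Sigma)$''), so the only bound required is $\|\tilde F\|_\infty \leq \|f\|_\infty \leq \|f\|_\alpha$, which is automatic. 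With that correction your argument is complete; the worry you raised as ``the main obstacle'' is in fact a non-issue under the paper's hypotheses.
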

	
	\begin{proof}
		Take the neighbourhood $V$ from the previous proposition, given $g_2\in V$ and any $K_{g_2}$ stationary measure $\mu_{g_2}$,
		\begin{equation}
		    \label{exponentialConv}
			\upsilon_\alpha \left(\overbar{Q}_{g_2}^nf - \int_\Sigma f(\omega)d\mu_{g_2}(\omega)\right) = \upsilon_\alpha(\overbar{Q}_{g_2}^nf) \leq \upsilon_\alpha(f)k_\alpha^n(g_2) \leq C\sigma^n ||f||_\alpha.
		\end{equation}
		So it remains to prove
		\begin{equation}
		    \label{exponentialGoal}
			\left|\left|\overbar{Q}_{g_2}^nf - \int_\Sigma f(\omega) d\mu_{g_2}(\omega)\right|\right|_\infty \leq 2\left|\left|\overbar{Q}_{g_2}^nf\right|\right|_\infty \leq C\sigma^n ||f||_\alpha,
		\end{equation}
	    for possibly some other $C<\infty$ and $0<\sigma<1$.
	    
        For this purpose, consider as well as the operator
        $ Q : L^\infty(\Sigma) \to L^\infty(\Sigma)$
        \begin{equation*}
            (Q  f)( \omega_1):=  \int_{\Sigma} f( \omega_2)\, dK_{\omega_1}(\omega_2). 
        \end{equation*}
         
        There is a natural projection $\pi : \Sigma\times \partial X\to \Sigma$ which induces a bounded linear embedding $\pi^\ast : L^\infty(\Sigma) \to \mathcal{H}_\alpha(\Sigma\times \partial X)$, $\pi^\ast f := f\circ \pi$. Notice that the range of this embedding is the subspace
        \begin{equation*}
            \pi^\ast L^\infty(\Sigma)=\left\{ f\in \mathcal{H}_\alpha(\Sigma\times X^h)\, : \, v_\alpha(f)=0 \, \right\}
        \end{equation*}
        and the following diagram commutes for every $n\in\mathbb{N}$
        \begin{equation*}
            \begin{CD}
            	L^\infty(\Sigma)  @>Q^n >> L^\infty(\Sigma) \\
            	@V\pi^\ast VV @VV \pi^\ast V\\
            	\mathcal{H}_\alpha(\Sigma\times \partial X)  @>>  \overbar{Q}^n > \mathcal{H}_\alpha(\Sigma\times \partial X) .
            \end{CD}
        \end{equation*}
        
        Given $f\in\mathcal{H}_\alpha(\Sigma\times X^h)$, by (\ref{exponentialConv}) the iterates $\overbar{Q}^n f$ converge exponentially fast to the closed  subspace $L^\infty(\Sigma) \equiv \pi^\ast L^\infty(\Sigma) \subseteq \mathcal{H}_\alpha(\Sigma\times X^h)$. On the other hand by assumption $Q$ is strongly mixing on $L^\infty(\Sigma)$. Combining these two properties and the fact that Markov operators do not expand we get (\ref{exponentialGoal}).
	\end{proof}
	
	The Laplace-Markov operator $Q_{g,z}$ of the observed Markov system $(K_g,\mu_g, \zeta_g)$ is given by 
	\begin{equation*}
		(Q_{g,z}f)(\omega_0, \omega_1, \xi) = \int_{\Sigma} f(\omega_1, \omega_2, g(\omega_1, \omega_2)^{-1} \xi) b^{zh_\xi(g(\omega_1, \omega_2) x_0)}\, K(\omega_1, d\omega_2). 
	\end{equation*}

	\begin{lemma}
		\label{conditionC4}
		Given $g_1, g_2 \in S^\infty(\Sigma \times \Sigma, G)$ and $b>0$, there is a constant $C_2 > 0$ such that for all $f \in $$\mathcal{H}_\alpha(\Sigma \times \partial X)$ and all $z \in \mathbb{C}$ such that $\textrm{Re}\, z \leq c$,
		\begin{equation*}
			||Q_{g_1, z}f - Q_{g_2, z}f||_\infty  \leq C_2 d_\infty(g_1,g_2)^\alpha||f||_\alpha.
		\end{equation*}
		Moreover, $C_2$ is bounded on a neighborhood of $g_1$.
	\end{lemma}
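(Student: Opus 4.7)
The plan is to split the difference $(Q_{g_1,z}f - Q_{g_2,z}f)(\omega_0,\omega_1,\xi)$ into two natural pieces and control each with the appropriate form of regularity. Writing $g_i$ for $g_i(\omega_1,\omega_2)$, the pointwise integrand equals $A + B$ where
\[
A = \bigl[f(\omega_1,\omega_2,g_1^{-1}\xi) - f(\omega_1,\omega_2,g_2^{-1}\xi)\bigr]\, b^{zh_\xi(g_1 x_0)},
\]
\[
B = f(\omega_1,\omega_2,g_2^{-1}\xi)\bigl[b^{zh_\xi(g_1 x_0)} - b^{zh_\xi(g_2 x_0)}\bigr].
\]
The term $A$ captures the change in the argument of $f$ and will yield the $\alpha$-Hölder bound directly; the term $B$ captures the change in the exponential weight and will produce a Lipschitz bound that we interpolate down to $\alpha$-Hölder on a neighborhood.

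For $A$, note that horofunctions are $1$-Lipschitz with $h_\xi(x_0)=0$, so $|h_\xi(g_1 x_0)|\leq d(g_1x_0, x_0)\leq \log_b d_\infty(g_1)$, which bounds $|b^{zh_\xi(g_1 x_0)}|$ by a power of $d_\infty(g_1)$ depending on $\operatorname{Re} z$. The $f$-difference is bounded by $\upsilon_\alpha(f)\, D_b(g_1^{-1}\xi, g_2^{-1}\xi)^\alpha$, and by the very definition of $d_G$ (which sups $D_b$ over both the action of $g$ and of $g^{-1}$), this is at most $\upsilon_\alpha(f)\, d_G(g_1,g_2)^\alpha \leq \upsilon_\alpha(f)\, d_\infty(g_1,g_2)^\alpha$.

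For $B$, apply the mean value theorem to $t \mapsto b^{zt}$: the exponential difference is bounded by $|z|(\log b)\cdot b^{|\operatorname{Re} z|\max(|h_\xi(g_1x_0)|,|h_\xi(g_2x_0)|)}\cdot |h_\xi(g_1x_0)-h_\xi(g_2x_0)|$, where the first factor is again controlled by $\max(d_\infty(g_1),d_\infty(g_2))^c$ and the last is at most $d(g_1x_0, g_2x_0)$ by $1$-Lipschitzness of $h_\xi$. To convert $d(g_1x_0,g_2x_0)$ into $d_\infty(g_1,g_2)$, use the same trick as in the first lemma of Section 3.1: either $D_b$ is already a multiple of $d$, or one applies $(\log b)x<b^{x/2}$ and splits the exponent, picking up a controlled factor $b^{d(g_ix_0,x_0)}\leq d_\infty(g_i)$. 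Bounding the $f$-factor by $\|f\|_\infty$ produces $|B|\leq C'\|f\|_\infty\, d_\infty(g_1,g_2)$ where $C'$ depends only on $|z|$ and $\max(d_\infty(g_1),d_\infty(g_2))$.

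The last step is to convert this Lipschitz bound into the desired $\alpha$-Hölder one. Since $D_b\leq 1$, shrinking to a neighborhood $V$ of $g_1$ of radius $R$ gives $d_\infty(g_1,g_2)\leq R^{1-\alpha}\,d_\infty(g_1,g_2)^\alpha$ for $g_2\in V$. Integrating $|A+B|$ against $K(\omega_1,\cdot)$ (a probability measure) and taking the sup over $(\omega_0,\omega_1,\xi)$ yields the stated inequality, with $C_2$ depending only on $\alpha$, $|z|$, $R$ and $\max_{g_2\in V}d_\infty(g_2)$; this last quantity is bounded on $V$ by the triangle inequality and the lemma on $d_\infty$ in Section 3.1, proving the moreover statement. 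The only delicate point is the $B$-term bound, which is naturally Lipschitz; converting to $\alpha$-Hölder uses essentially that the visual metric is bounded by $1$ on a neighborhood. Everything else is a routine application of the exponential inequality $(\log b)x\leq b^{x/2}$ already used throughout the paper.
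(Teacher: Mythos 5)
Your proof is correct and follows essentially the same route as the paper's: the same two-term decomposition (one term carrying the change in the argument of $f$, bounded via $\upsilon_\alpha(f)$ and $D_b(g_1^{-1}\xi,g_2^{-1}\xi)\leq d_\infty(g_1,g_2)$; the other carrying the change in the exponential weight, handled by the mean value theorem and $1$-Lipschitzness of horofunctions), followed by the same conversion from the Lipschitz bound to the $\alpha$-H\"older bound using $d_\infty(g_1,g_2)\leq d_\infty(g_1,g_2)^\alpha$ on a small neighborhood. You are in fact slightly more careful than the paper at one point: the paper silently passes from $|h_\xi(g_1x_0)-h_\xi(g_2x_0)|\leq d(g_1x_0,g_2x_0)$ to a bound in terms of $d_\infty(g_1,g_2)$, whereas you correctly flag that this requires the $(\log b)x<b^{x/2}$ argument from the first lemma of Section 3.1 to compare the ambient distance $d$ with the visual metric $D_b$.
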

	
	\begin{proof}
		Let $\xi \in \partial X$. Start by noticing that writting $z=x + yi$ with $x \leq c$
		\begin{align*}
			|b^{zh_\xi(g_1 x_0)}-b^{zh_\xi(g_2 x_0)}| & \leq \max_{i=1,2} b^{c\, d(g_i x_0, x_0)}      \left|c\,h_\xi(g_1x_0) -  c\,  h_\xi(g_2x_0)\right|\\
			& \leq c\, d_\infty(g_1,g_2) \max_{i=1,2} d_\infty(g_i)^{c}.
		\end{align*}
		Hence
		\begin{align*}
			|Q_{g_1, z}f & - Q_{g_2, z}f| \leq \mathbb{E}_{\omega_0} \left[|b^{zh_\xi(g_1x_0)}f(e_1, g_1^{-1} \xi) - b^{zh_\xi(g_2x_0)}f(e_1, g_2^{-1} \xi)|\right]\\
			& \leq ||f||_\infty \mathbb{E}_{\omega_0} \left[|b^{zh_\xi(g_1x_0)}-b^{zh_\xi(g_2x_0)}| \right] + \max_{i=1,2} d_\infty(g_i)^{c} \mathbb{E}_{\omega_0} \left[|f(e_1, g_1^{-1}\xi) - f(e_1, g_2^{-1} \xi)|\right]\\
			& \leq c\, d_\infty(g_1,g_2) \max_{i=1,2} d_\infty(g_i)^{c} ||f||_\infty + \max_{i=1,2} d_\infty(g_i)^{c} \upsilon_\alpha(f) \mathbb{E}_{\omega_0} \left[D_b(g_1^{-1}\xi, g_2^{-1}\xi)^\alpha\right] \\
			& \leq C_2 ||f||_\alpha d_\infty(g_1, g_2)^\alpha.
		\end{align*}
		where $C_2 = \max \left\{ c\, \max_{i=1,2} d_\infty(g_i)^{c}, \max_{i=1,2} d_\infty(g_i)^{c} \right\}$ which are bounded in a neighbourhood $g_1$. The last inequality is a consequence of $d_\infty(g_1, g_2) < d_\infty(g_1, g_2)^\alpha< 1$ and $D_b(g_1^{-1}\xi, g_2^{-1}\xi) \leq d_\infty(g_1, g_2)$.
	\end{proof}
	
	\begin{proof}[Proof of Proposition \ref{conditionsC}]
		Point $C1)$ is obvious. For $C3)$ recall from (\ref{zetaG}) that $||b^{\zeta_g}||_\infty = ||b^{h(gx_0)}||_\infty \leq d_\infty(g)$ which is finite, hence $\zeta_g \in \mathcal{H}_\alpha(\Sigma \times \partial X)$. Therefore $Q_{g,z\zeta}$ acts on $\mathcal{H}_\alpha(\Sigma \times \partial X)$ as the latter is a Banach algebra. Point $C2)$ is a consequence of Proposition \ref{condiditionC2} while $C4)$ follows from the previous Lemma.
	\end{proof}
	
	With this, we can now obtain the large deviations.
	
	\begin{proof}[Proof of Theorem \ref{LDT}]
		Using Theorem \ref{LDTT}, there exists $V$ a neighbourhood of $g \in S^\infty(K)$ and constants $\varepsilon_0, C, k>0$ such that for every $g_2\in V$, $0<\varepsilon<\varepsilon_0$, $h \in X_\infty^h$ and $n\in \mathbb{N}$
		\begin{equation*}
			\mathbb{P}_\mu\left[ \left| \frac{1}{n} h(g_2^{(n)}x_0)- \ell(g_2) \right| > \varepsilon \right] \leq C b^{-k\varepsilon^2n}.
		\end{equation*} 
		Using Lemma 5 in  \cite{sampaio2021regularity}, one obtains that there exists an horofunction $h\in X_\infty^h$ such that
		\begin{equation*}
			h(g_2^{(n)}x_0) \leq d(g_2^{(n)}x_0,x_0) \leq h(g_2^{(n)}x_0) + K(\delta).
		\end{equation*}
		where $K(\delta)$ is a constant depending on $\delta$. Using this inequality we obtain the large deviations with a possible loss in the constant $C$.
	\end{proof}
	
	\section*{Acknowledgements}

    The author was supported by the University of Lisbon, under the PhD scholarship program: BD2018. I woud also like to thank professor Pedro Duarte for all the comments and remarks.
	
	\bibliographystyle{plain}
	\bibliography{biblio}
	
	Departamento de Matemática, Faculdade de Ciências, Universidade de Lisboa, Portugal
	
	Email: lmsampaio@fc.ul.pt

\end{document}